\newtheorem{theorem}{Theorem}
\newtheorem*{theorem*}{Theorem}
\newtheorem{corollary}[theorem]{Corollary}
\newcommand{\ASM}{\mathrm{ASM}}
\newcommand{\SVDWBC}{\mathrm{6VDW}}
\newcommand{\SV}{\mathrm{6V}}
\newcommand{\rT}{\rho_\mathrm{T}}
\newcommand{\rL}{\rho_\mathrm{L}}
\newcommand{\rB}{\rho_\mathrm{B}}
\newcommand{\rR}{\rho_\mathrm{R}}
\newcommand{\adj}{\mathrm{adj}}
\newcommand{\opp}{\mathrm{opp}}
\newcommand{\qua}{\mathrm{quad}}
\newcommand{\tri}{\mathrm{tri}}
\newcommand{\gen}{\mathrm{gen}}
\newcommand{\A}{\mathcal{A}}
\newcommand{\Z}{\widetilde{Z}}
\newcommand{\N}{\mathcal{N}}
\newcommand{\row}{\mathrm{row}}
\newcommand{\col}{\mathrm{col}}
\newcommand{\G}{\mathcal{G}_n}
\author[R.~E.~Behrend]{Roger E.~Behrend}
\address{R.~E.~Behrend, School of Mathematics, Cardiff University, Cardiff, CF24 4AG, UK}
\email{behrendr@cardiff.ac.uk}
\gdef\s{s}
\title[Multiply-refined enumeration of ASM{\s}]{Multiply-refined enumeration of\\
alternating sign matrices}
\keywords{Alternating sign matrices, six-vertex model with domain-wall boundary conditions, Desnanot--Jacobi identity}
\begin{document}
\begin{abstract}
Four natural boundary statistics and two natural bulk statistics are considered for alternating sign matrices (ASMs).
Specifically, these statistics are the positions of the~$1$'s in the first and last rows and columns of an ASM,
and the numbers of generalized inversions and~$-1$'s in an ASM.
Previously-known and related results for the exact enumeration of ASMs with prescribed values of some of these statistics are discussed in detail.
A quadratic relation which recursively determines the generating function associated with all six statistics is then obtained.
This relation also leads to various new identities satisfied by generating functions associated with fewer than six of the statistics.
The derivation of the relation involves combining the Desnanot--Jacobi determinant identity with the
Izergin--Korepin formula for the partition function of the six-vertex model with domain-wall boundary conditions.
\end{abstract}
\maketitle
{\small\tableofcontents}
\section{Introduction}\label{intro}
A major focus of attention throughout the history of alternating sign matrices (ASMs)
has simply been the derivation of
results related to their exact enumeration.  Such results typically state that the number of ASMs which satisfy
specific conditions, such as having prescribed values of certain statistics or being invariant under certain symmetry
operations, is given by an explicit formula or generating function,
is equal to the number of combinatorial objects of some other variety satisfying specific conditions,
or is equal to a number which arises from a particular physical model.

A few examples of such results, with references to conjectures and initial proofs, are as follows:
a formula for the total number of ASMs of any fixed size but with no further conditions applied
(Mills, Robbins and Rumsey~\cite{MilRobRum82,MilRobRum83},
Zeilberger~\cite{Zei96a}, and Kuperberg~\cite{Kup96});
a formula for the number of ASMs with a prescribed boundary row or column (Mills, Robbins and
Rumsey~\cite{MilRobRum82,MilRobRum83}, and Zeilberger~\cite{Zei96b});
formulae for numbers of ASMs invariant under certain natural symmetry operations (Robbins~\cite{Rob91,Rob00}, Kuperberg~\cite{Kup02},
Okada~\cite{Oka06}, and Razumov and Stroganov~\cite{RazStr06b,RazStr06a});
equalities between numbers of certain ASMs and numbers of certain totally symmetric self-complementary plane partitions
(Mills, Robbins and Rumsey~\cite{MilRobRum86}, and Fonseca and Zinn-Justin~\cite{FonZin08});
equalities between numbers of certain ASMs and numbers of certain descending plane partitions (Mills, Robbins and
Rumsey~\cite{MilRobRum83}, and Behrend, Di Francesco and Zinn-Justin~\cite{BehDifZin12});
equalities between numbers of certain ASMs and numbers associated with a certain case of the~$O(1)$ loop model
(Razumov and Stroganov~\cite{RazStr04a}, and Cantini and Sportiello~\cite{CanSpo11}).
For reviews of some of these results, see, for example,
Bressoud~\cite{Bre99,Bre08}, Bressoud and Propp~\cite{BrePro99}, Di Francesco~\cite[Sec.~4]{Dif12a},~\cite{Dif12b}, Hone~\cite{Hon06},
Zeilberger~\cite{Zei05}, or Zinn-Justin~\cite{Zin09}.

The main result of this paper is of the type which provides a relation which determines the generating function
associated with numbers of ASMs of any fixed size with prescribed values of particular statistics.
In any ASM, the first and last rows and columns each contain a single entry of~1, with all of their other entries being~$0$'s,
so that the positions of these~1's provide four natural statistics which describe the boundary configuration of an ASM.
The generating function under primary consideration in this paper is associated with these four boundary statistics,
together with two statistics which
depend on the bulk structure of an ASM, namely the number of generalized inversions (which will be defined in~\eqref{numuA})
and the number of entries of~$-1$.  A detailed account of the elementary properties of this generating function and these statistics will be
given in Section~\ref{defprop}.

All previously-known results for the exact enumeration of ASMs have involved fewer than six of these statistics.
A review of such results will be given in Section~\ref{prev}.  In particular,
that section will aim to provide a comprehensive account of all previous appearances of these statistics in exact enumeration,
together with an outline of some new results which can be obtained relatively straightforwardly from the previously-known results.
Recently, a result involving all four of the boundary statistics, but neither of the bulk statistics,
was obtained, independently of the work reported in this paper, by Ayyer and Romik~\cite[Thm.~2]{AyyRom13}.  That result will be outlined
in Section~\ref{prev4}.

In ASM enumerations, it is common to refer to a certain order of refinement which is based on
the number of boundary statistics involved.  Hence, the primary generating functions used in this paper,
and by Ayyer and Romik~\cite{AyyRom13}, can be described as quadruply-refined.

The main result of this paper, Theorem~\ref{thm}, will be stated in Section~\ref{newres}, and proved in Section~\ref{proof}.
The result consists of a quadratic relation satisfied by the ASM generating function associated with all six statistics, and
which enables this generating function to be obtained recursively for successive ASM sizes.  Various corollaries of Theorem~\ref{thm},
consisting of new expressions or relations for ASM generating functions associated with fewer than six of the statistics,
will also be obtained in Section~\ref{newres}.

The approach used in the proof of Theorem~\ref{thm} is essentially as follows. First, certain standard techniques,
which have played a crucial role in proofs of many of the other known enumerative results for ASMs,
are used to obtain a determinantal expression related to ASM generating functions.  More specifically, a
bijection between ASMs and configurations of the
statistical mechanical six-vertex, or square ice, model with domain-wall boundary conditions (DWBC) is used to derive a relation between
ASM generating functions and the partition function of the model,
and the Izergin--Korepin formula
is used to provide a determinantal expression for this partition function.
Next, the Desnanot--Jacobi determinant identity is applied to the matrix in the Izergin--Korepin formula,
which leads to a quadratic relation involving the required quadruply-refined
ASM generating function, and generating functions associated with no
ASM boundary statistics or with two ASM boundary statistics, for all four pairs of adjacent boundaries.

The key step in this derivation is the use of the Desnanot--Jacobi identity together with the Izergin--Korepin
formula, and the reason that this leads to a relation of the previously-described form can be understood relatively easily.
The six-vertex model with DWBC, in the form used here, involves so-called spectral parameters $u_1,\ldots,u_n$ and $v_1,\ldots,v_n$,
and the Izergin--Korepin formula expresses the partition function of this model as an explicit prefactor multiplied by
the determinant of an $n\times n$ matrix, whose entry in row $i$ and column~$j$ depends only on spectral parameters~$u_i$ and~$v_j$.
Furthermore, in terms of generating functions for $n\times n$ ASMs,
the parameters~$u_i$ and~$v_j$ are, in a certain sense,
associated with row $i$ and column $j$ of the ASMs.  For an $n\times n$ matrix $M$, the Desnanot--Jacobi identity,
in the form used here, consists of a relation
involving $\det M$ and five minors of $M$: the four connected $(n-1)\times(n-1)$ minors,
corresponding to deletion of the first or last row and first or last column of~$M$, and the central $(n-2)\times(n-2)$ minor,
corresponding to deletion of the first and last rows and columns of~$M$.
Therefore, it seems that applying the Desnanot--Jacobi identity to the
matrix in the Izergin--Korepin formula, and using an assignment of spectral parameters in which~$u_1$,~$u_n$,~$v_1$ and~$v_n$ remain arbitrary, while
$u_2,\ldots,u_{n-1}$ are equal and $v_2,\ldots,v_{n-1}$ are equal,
should lead to an expression involving a quadruply-refined generating
function for $n\times n$ ASMs, four doubly-refined generating functions for $(n-1)\times(n-1)$ ASMs, and an unrefined generating function
for $(n-2)\times(n-2)$ ASMs.  Indeed, this essentially does occur, with some further, subsidiary aspects of the resulting expression being
related to the form of the prefactor in the Izergin--Korepin formula, and to the fact that each corner entry of an ASM is
associated with two of the parameters $u_1$,~$u_n$,~$v_1$ or $v_n$.

The Desnanot--Jacobi identity has, in fact, been associated with ASMs
since these matrices first arose.
In particular, the identity is used in Dodgson's condensation algorithm~\cite{Dod66} for determinant evaluation,
and it was through studies of this algorithm by Mills, Robbins and Rumsey that ASMs initially appeared.
In particular, it was shown by Robbins and Rumsey~\cite[Eq.~(27)]{RobRum86} that
if Dodgson's algorithm is applied to an $n\times n$ matrix~$M$ using a modified form of the Desnanot--Jacobi identity containing a parameter $\lambda$,
then the resulting so-called $\lambda$-determinant of~$M$ can be expressed naturally as a sum over $n\times n$ ASMs.
The latter expression involves the two ASM bulk statistics which are used in this paper, and,
as will be shown in Section~\ref{prev3}, the $\lambda$-determinant shares certain features with the
quadruply-refined ASM generating function studied in this paper.  For example,
the modified Desnanot--Jacobi identity satisfied by the $\lambda$-determinant
is analogous to the quadratic relation satisfied by the quadruply-refined ASM generating function.
For further information regarding Dodgson condensation, $\lambda$-determinants, and related matters, see, for example,
Abeles~\cite{Abe08}, Bressoud~\cite[Sec.~3.5]{Bre99}, Bressoud and Propp~\cite{BrePro99},
Di Francesco~\cite{Dif13}, Langer~\cite{Lan13}, Propp~\cite{Pro05}, or Robbins and Rumsey~\cite{RobRum86}.

It should also be noted that the Desnanot--Jacobi identity, the $\lambda$-modified Desnanot--Jacobi identity,
and further related equations studied by Robbins and Rumsey~\cite{RobRum86}, and others,
are closely connected with the Laurent phenomenon, cluster algebras, and associated areas.
For reviews of such connections, see, for example, Di Francesco~\cite[Sec.~5]{Dif12a},
Hone~\cite[Sec.~5]{Hon06}, or Propp~\cite[Sec.~10]{Pro01}.

Following a suggestion by a referee, a file \texttt{MultiplyRefinedEnumerationOfASMs.nb},
in which cases of the main results of this paper are demonstrated in Mathematica, has been provided at the author's web page.

\section{Definitions and basic properties}\label{defprop}
\subsection{Statistics}\label{stat}
In this section, the standard definitions of ASMs and certain statistics for ASMs
are given, and the elementary properties of these statistics are identified in detail.

An ASM, as first defined by Mills, Robbins and Rumsey~\cite{MilRobRum82,MilRobRum83},
is simply a square matrix in which each entry is~$0$, $1$ or~$-1$, and along each row and column
the nonzero entries alternate in sign and have a sum of~$1$.

It follows that, for any ASM $A$, each partial row sum $\sum_{j'=1}^jA_{ij'}$  and each partial column sum $\sum_{i'=1}^{i}A_{i'j}$ is 0 or 1.
It can also be seen that any permutation matrix is an ASM, and that, in any ASM,
the first and last rows and columns each contain a single~1, with all of their other entries being~$0$'s.

For each positive integer $n$, denote the set of all $n\times n$ ASMs as $\ASM(n)$.
For example, for $n=1,2,3$, these sets are
\begin{align}\notag
\ASM(1)&=\{(1)\},\\
\notag\ASM(2)&=\left\{\!\left(\begin{array}{@{}c@{\:\,}c@{}}1&0\\[-0.7mm]0&1\end{array}\right)\!,
\left(\begin{array}{@{}c@{\:\,}c@{}}0&1\\[-0.7mm]1&0\end{array}\right)\!\right\},\\
\label{ASM123}\ASM(3)&=\left\{\!
\left(\begin{array}{@{}c@{\:\,}c@{\:\,}c@{}}1&0&0\\[-0.7mm]0&1&0\\[-0.7mm]0&0&1\end{array}\right)\!,
\left(\begin{array}{@{}c@{\:\,}c@{\:\,}c@{}}0&1&0\\[-0.7mm]1&0&0\\[-0.7mm]0&0&1\end{array}\right)\!,
\left(\begin{array}{@{}c@{\:\,}c@{\:\,}c@{}}1&0&0\\[-0.7mm]0&0&1\\[-0.7mm]0&1&0\end{array}\right)\!,
\left(\begin{array}{@{}c@{\:\,}c@{\:\,}c@{}}0&1&0\\[-0.7mm]0&0&1\\[-0.7mm]1&0&0\end{array}\right)\!,
\left(\begin{array}{@{}c@{\:\,}c@{\:\,}c@{}}0&0&1\\[-0.7mm]1&0&0\\[-0.7mm]0&1&0\end{array}\right)\!,
\left(\begin{array}{@{}c@{\:\,}c@{\:\,}c@{}}0&0&1\\[-0.7mm]0&1&0\\[-0.7mm]1&0&0\end{array}\right)\!,
\left(\begin{array}{@{}c@{\:\,}c@{\:\,}c@{}}0&1&0\\[-0.7mm]1&-1&1\\[-0.7mm]0&1&0\end{array}\right)\!\right\}.\end{align}

For any $A\in\ASM(n)$, define statistics which depend on the bulk structure of $A$ as
\begin{equation}
\label{numuA}\nu(A)=\sum_{\substack{1\le i<i'\le n\\1\le j'\le j\le n}}\!A_{ij}\,A_{i'j'},\qquad\quad
\mu(A)=\text{number of $-1$'s in }A,\end{equation}
and define statistics which describe the configuration of $A$ at its top, right, bottom and left boundaries as, respectively,
\begin{align}
\notag\rT(A)&=\text{number of 0's to the left of the 1 in the top row of }A,\\[1mm]
\notag\rR(A)&=\text{number of 0's below the 1 in the right-most column of }A,\\[1mm]
\notag\rB(A)&=\text{number of 0's to the right of the 1 in the bottom row of }A,\\[1mm]
\label{rho}\rL(A)&=\text{number of 0's above the 1 in the left-most column of }A.\end{align}

Note that certain generalizations of the statistics~\eqref{numuA}--\eqref{rho} will be defined in Section~\ref{gensect}.

The statistic $\nu(A)$ in~\eqref{numuA} is a nonnegative integer for any $A\in\ASM(n)$, since it can
be written as $\nu(A)=\sum_{i,j=1}^n(\sum_{i'=1}^{i-1}A_{i'j})(\sum_{j'=1}^jA_{ij'})$,
where each factor in the summand (being a partial row or column sum of an ASM) is 0 or 1.
This statistic can also be written as
$\rule[-1.5ex]{0ex}{0ex}\nu(A)=\sum_{1\le i\le i'\le n;\;1\le j'<j\le n}A_{ij}\,A_{i'j'}$,
where this can be obtained from the definition in~\eqref{numuA} using the fact that each complete row and column sum of $A$ is a constant.
It will be seen in Section~\ref{ASMbijsect} that, in terms of the configuration of the six-vertex model with DWBC which
corresponds to~$A$,~$\nu(A)$ is simply the number of vertex configurations
of type~(1), or equally the number of vertex configurations of type~(2).

If~$A$ is a permutation matrix, then it can be seen from~\eqref{numuA} that~$\nu(A)$ is the number of inversions
in the permutation~$\pi$ given by $\delta_{\pi_i,j}=A_{ij}$.  Accordingly, for any
ASM $A$, $\nu(A)$ is referred to as the number of generalized inversions in~$A$.
This statistic was first defined and used
by Robbins and Rumsey~\cite[Eq.~(18)]{RobRum86}, who referred to it as the number
of positive inversions in an ASM~\cite[p.~182]{RobRum86}.
A closely-related statistic, $\rule[-1.5ex]{0ex}{0ex}\sum_{1\le i<i'\le n;\;1\le j'<j\le n}A_{ij}\,A_{i'j'}=\nu(A)+\mu(A)$,
was previously defined and
used by Mills, Robbins and Rumsey~\cite[p.~344]{MilRobRum83}, and provides an alternative generalized inversion number for each $A\in\ASM(n)$.

Proceeding to the other statistics, $\mu(A)$ in~\eqref{numuA} can also be written as $\mu(A)=\bigl(\sum_{i,j=1}^n(A_{ij})^2-n\bigr)/2$,
and the boundary statistics of~\eqref{rho} can be depicted diagrammatically as
\psset{unit=4mm}
\begin{equation}\label{rhodiag}\raisebox{-24.2mm}{\pspicture(-3,-1.7)(13,11.5)
\rput(5,5){$\left(\begin{array}{@{\,}c@{\;\;}c@{\;\;}c@{\;\;}c@{\;\;}c@{\;\;}c@{\;\;}c@{\;\;}c@{\;\;}c@{\,}}
0&0&0&0&1&0&0&0&0\\0&&&&&&&&0\\0&&&&&&&&0\\1&&&&&&&&0\\0&&&&A&&&&1\\0&&&&&&&&0\\0&&&&&&&&0\\0&&&&&&&&0\\
0&0&1&0&0&0&0&0&0\end{array}\right)$}
\rput(2.1,11.1){$\scriptstyle\rT(A)$}
\rput(6.6,-1){$\scriptstyle\rB(A)$}
\rput(-1.8,8.7){$\scriptstyle\rL(A)$}
\rput(11.9,2){$\scriptstyle\rR(A)$}
\rput(13.5,4){.}
\psline[linewidth=1pt]{<-}(0.3,11.1)(0.9,11.1)\psline[linewidth=1pt]{->}(3.3,11.1)(3.9,11.1)
\psline[linewidth=1pt]{<-}(3.6,-1)(5.4,-1)\psline[linewidth=1pt]{->}(7.8,-1)(9.6,-1)
\psline[linewidth=1pt]{<-}(-1.8,7.1)(-1.8,8.1)\psline[linewidth=1pt]{->}(-1.8,9.3)(-1.8,10.3)
\psline[linewidth=1pt]{<-}(11.9,-0.2)(11.9,1.4)\psline[linewidth=1pt]{->}(11.9,2.6)(11.9,4.2)
\endpspicture}\end{equation}

It can be checked that the statistics of~\eqref{numuA}--\eqref{rho}, regarded as functions on $\ASM(n)$, have ranges
\begin{align}\notag\nu(\ASM(n))&=\bigl\{0,1,\ldots,\tfrac{n(n-1)}{2}\bigr\},\\
\notag\mu(\ASM(n))&=\bigl\lfloor\tfrac{(n-1)^2}{4}\bigr\rfloor=\begin{cases}\bigl\{0,1,\ldots,\tfrac{(n-1)^2}{4}\bigr\},&n\text{ odd},\\
\bigl\{0,1,\ldots,\tfrac{n(n-2)}{4}\bigr\},&n\text{ even,}\end{cases}\\
\label{range}\rT(\ASM(n))&=\rR(\ASM(n))=\rB(\ASM(n))=\rL(\ASM(n))=\{0,1,\ldots,n-1\}.\end{align}

The ASMs $A$ which give the extreme values within the ranges in~\eqref{range} are as follows.
The only $A$ with $\nu(A)=0$ is the $n\times n$ identity matrix, and the only~$A$
with $\nu(A)=\frac{n(n-1)}{2}$ is the $n\times n$ antidiagonal matrix given by $A_{ij}=\delta_{i,n+1-j}$.
The $A$ with $\mu(A)=0$ are the $n!$ $n\times n$ permutation matrices, and the
$A$ with $\mu(A)=\bigl\lfloor\tfrac{(n-1)^2}{4}\bigr\rfloor$ are given by
$A_{ij}=(-1)^{i+j+k_n}$ if $k_n+2\le i+j\le 2n-k_n\text{ and }|i-j|\le k_n$,
and $A_{ij}=0$ otherwise, where $k_n=\frac{n-1}{2}$ for $n$ odd (giving a single such $A$), and $k_n=\frac{n}{2}-1$ or $\frac{n}{2}$
for $n$ even (giving two such $A$). The $A$ for which a boundary statistic of~\eqref{rho} is $0$ or $n-1$ are the
ASMs with a 1 in a corner, which will be discussed further shortly.

It can be seen that transposition or anticlockwise quarter-turn rotation of an ASM give another
ASM.  It can also be checked easily
that, for each $A\in\ASM(n)$, the statistics~\eqref{numuA}--\eqref{rho} behave under these operations according to
\begin{align}\notag\nu(A)&=\nu(A^T)=\tfrac{n(n-1)}{2}-\nu(A^Q)-\mu(A),\quad&\mu(A)&=\mu(A^T)=\mu(A^Q),\\
\notag\rT(A)&=\rL(A^T)=n-1-\rL(A^Q),&\rR(A)&=\rB(A^T)=n-1-\rT(A^Q),\\
\label{TQ}\rB(A)&=\rR(A^T)=n-1-\rR(A^Q),&\rL(A)&=\rT(A^T)=n-1-\rB(A^Q),\end{align}
where $Q$ denotes anticlockwise quarter-turn rotation, i.e., $A^Q_{ij}=A_{j,n+1-i}$.
By combining transposition and anticlockwise quarter-turn rotation, the behaviour of any of the eight operations of
the dihedral group acting on ASMs can be obtained.

The properties of ASMs with a 1 in a corner can be described easily.
For example, for the case of ASMs with a~1 in the top-left corner, the sets $\{A\in\ASM(n)\mid A_{11}=1\}=\{A\in\ASM(n)\mid\rT(A)=\rL(A)=0\}$
and $\ASM(n-1)$ are in bijection for any $n\ge2$,
where an ASM from the first set is mapped to the second set by simply deleting the first row and first column.
Furthermore, if $A$ from the first set is mapped to $A'$ in the second set, then
$\nu(A')=\nu(A)$, $\mu(A')=\mu(A)$, $\rR(A')=\rR(A)$ and $\rB(A')=\rB(A)$.

The properties of ASMs in which a 1 on a boundary is separated from a corner by a single zero,
i.e., ASMs in which a boundary statistic of~\eqref{rho} is $1$ or $n-2$, can also be described relatively easily.
For example, for the case of ASMs $A$ with $A_{21}=1$, the sets
$\{A\in\ASM(n)\mid A_{21}=A_{1,k+1}=1\}=\{A\in\ASM(n)\mid\rL(A)=1,\:\rT(A)=k\}$ and
$\{A\in\ASM(n-1)\mid A_{11}=\ldots=A_{1,k-1}=0\}=\{A\in\ASM(n-1)\mid\rT(A)\ge k-1\}$ are
in bijection for any $n\ge2$ and $k=1,\ldots,n-1$,
where an ASM $A$ from the first set is mapped to the second set by replacing $A_{2,k+1}$ by $A_{2,k+1}+1$,
while leaving all other entries unchanged, and then deleting the first row and first column.
Furthermore, if $A$ from the first set is mapped to $A'$ in the second set, then $\nu(A')=\nu(A)-A_{2,k+1}-1$ and $\mu(A')=\mu(A)+A_{2,k+1}$.

The previous bijection seems not to have appeared explicitly in the literature, although it is likely to have been known to several
authors.  In particular, it seems to have been used by Mills, Robbins and Rumsey~\cite[p.~344]{MilRobRum83}
in claiming that the case $k=1$ of their Conjecture~2
(which will appear as the identity $\A_{n,1}=\tfrac{n}{2}\,\A_{n-1}$ in~\eqref{A1id})
could be proved easily, and it also seems to have been used by Stroganov~\cite[p.~61]{Str06} in claiming that
a certain identity (which corresponds to the first equation of~\eqref{A1id}) could be proved bijectively.

\subsection{Generating functions}\label{genfunc}
In this section, various ASM generating functions are defined, and some simple relations which they satisfy are derived.
Each of these generating functions is described by a certain order of refinement, which
corresponds to the number of boundary statistics of~\eqref{rho} (or simply the number of boundaries) with which it is associated.
Each generating function is also associated with the two bulk statistics of~\eqref{numuA},
i.e., the numbers of generalized inversions and~$-1$'s in an ASM.

For each positive integer $n$,
define a quadruply-refined ASM generating function, which involves all six statistics of~\eqref{numuA}--\eqref{rho},
and associated indeterminates $x$, $y$, $z_1$, $z_2$, $z_3$ and $z_4$, as
\begin{equation}\label{Zquad}Z^\qua_n(x,y;z_1,z_2,z_3,z_4)=
\textstyle\sum_{A\in\ASM(n)}x^{\nu(A)}\,y^{\mu(A)}\,z_1^{\rT(A)}\,z_2^{\rR(A)}\,z_3^{\rB(A)}\,z_4^{\rL(A)}.\\
\end{equation}
It follows that $x$ and $y$ can be regarded as bulk parameters or weights,
and that $z_1$, $z_2$, $z_3$ and $z_4$ can be regarded as boundary parameters or weights.

It can be seen, using~\eqref{range} and~\eqref{Zquad},
that $Z^\qua_n(x,y;z_1,z_2,z_3,z_4)$ is a polynomial, with nonnegative integer coefficients,
of degree $\frac{n(n-1)}{2}$ in $x$, of degree $\bigl\lfloor\tfrac{(n-1)^2}{4}\bigr\rfloor$
in $y$, and of degree $n-1$ in each of $z_1$, $z_2$, $z_3$ and $z_4$.

Examples of the quadruply-refined ASM generating function~\eqref{Zquad}, for $n=1,2,3$, are
\begin{align}\notag Z^\qua_1(x,y;z_1,z_2,z_3,z_4)&=1,\\
\notag Z^\qua_2(x,y;z_1,z_2,z_3,z_4)&=1+x\,z_1\,z_2\,z_3\,z_4,\\
\notag Z^\qua_3(x,y;z_1,z_2,z_3,z_4)&=1+x\,z_1\,z_4+x\,z_2\,z_3+
x^2\,z_1\,z_2\,z_3^2\,z_4^2+x^2\,z_1^2\,z_2^2\,z_3\,z_4\;+\\
\label{Zquad123}&\hspace*{60mm}x^3\,z_1^2\,z_2^2\,z_3^2\,z_4^2+x\,y\,z_1\,z_2\,z_3\,z_4,\end{align}
where the terms are written in orders which correspond to those used in~\eqref{ASM123}.

Now define triply-refined, adjacent-boundary doubly-refined,
opposite-boundary doubly-refined, singly-refined and unrefined ASM generating functions as, respectively,
\begin{align}\label{Zdef}
\notag Z^\tri_n(x,y;z_1,z_2,z_3)&=Z^\qua_n(x,y;z_1,1,z_2,z_3)=
\textstyle\sum_{A\in\ASM(n)}x^{\nu(A)}\,y^{\mu(A)}\,z_1^{\rT(A)}\,z_2^{\rB(A)}\,z_3^{\rL(A)},\\
\notag Z^\adj_n(x,y;z_1,z_2)&=Z^\qua_n(x,y;z_1,1,1,z_2)=\textstyle\sum_{A\in\ASM(n)}x^{\nu(A)}\,y^{\mu(A)}\,z_1^{\rT(A)}\,z_2^{\rL(A)},\\
\notag Z^\opp_n(x,y;z_1,z_2)&=Z^\qua_n(x,y;z_1,1,z_2,1)=\textstyle\sum_{A\in\ASM(n)}x^{\nu(A)}\,y^{\mu(A)}\,z_1^{\rT(A)}\,z_2^{\rB(A)},\\
\notag Z_n(x,y;z)&=Z^\qua_n(x,y;z,1,1,1)=\textstyle\sum_{A\in\ASM(n)}x^{\nu(A)}\,y^{\mu(A)}\,z^{\rT(A)},\\
Z_n(x,y)&=Z^\qua_n(x,y;1,1,1,1)=\textstyle\sum_{A\in\ASM(n)}x^{\nu(A)}\,y^{\mu(A)},\end{align}
where $z$ is a further indeterminate.

Also define alternative quadruply-refined and alternative adjacent-boundary doubly-refined
ASM generating functions as, respectively,
\begin{align}\label{Zaltdef}
\notag\Z^\qua_n(x,y;z_1,z_2,z_3,z_4)&=(z_2z_4)^{n-1}\,Z^\qua_n(x,y;z_1,\tfrac{1}{z_2},z_3,\tfrac{1}{z_4})\\
\notag&\textstyle=\sum_{A\in\ASM(n)}x^{\nu(A)}\,y^{\mu(A)}\,z_1^{\rT(A)}\,z_2^{n-\rR(A)-1}\,z_3^{\rB(A)}\,z_4^{n-\rL(A)-1},\\
\Z^\adj_n(x,y;z_1,z_2)&=Z^\qua_n(x,y;z_1,z_2,1,1)=\textstyle\sum_{A\in\ASM(n)}x^{\nu(A)}\,y^{\mu(A)}\,z_1^{\rT(A)}\,z_2^{\rR(A)}.\end{align}
Note that $\Z^\qua_n(x,y;z_1,z_2,z_3,z_4)$ is a generating function in which
the positions of the 1's in the first and last columns of an ASM are measured relative to the
opposite ends of the columns to those used in~\eqref{rho}--\eqref{rhodiag},
i.e., in this generating function, the statistics associated with $z_2$ and $z_4$ are,
respectively, the numbers of 0's above the 1 in the right-most column, and below the 1 in the left-most column
of an ASM.

It can be seen immediately that some relations among generating functions of~\eqref{Zdef}--\eqref{Zaltdef}, involving specializations
of boundary parameters to~1, are
\begin{align}
\notag Z^\tri_n(x,y;z_1,1,z_2)&=Z^\adj_n(x,y;z_1,z_2),\\
\notag Z^\tri_n(x,y;1,z_1,z_2)&=\Z^\adj_n(x,y;z_1,z_2),\\
\notag Z^\tri_n(x,y;z_1,z_2,1)&=Z^\opp_n(x,y;z_1,z_2),\\
\notag Z^\adj_n(x,y;z,1)=\Z^\adj_n(x,y;z,1)&=Z^\opp_n(x,y;z,1)=Z_n(x,y;z),\\
\label{1}Z_n(x,y;1)&=Z_n(x,y).
\end{align}

Some elementary identities satisfied by the ASM generating functions of~\eqref{Zquad},~\eqref{Zdef} and~\eqref{Zaltdef},
which follow from the ASM properties outlined in Section~\ref{stat}, will now be obtained.

Note that, in this and all subsequent sections, many of the identities which contain the positive integer~$n$
will be valid only for all $n\ge2$, or for all $n\ge3$.
This will often be due to their containing
terms (such as $Z_{n-1}(x,y)$ or $Z_{n-2}(x,y)$)
which are not defined if $n$ is taken to be~1 or~2.

Note also that several of the identities of this section, obtained here using simple combinatorial arguments, can alternatively be obtained
as special cases of more general identities which will be derived in subsequent sections using other methods.

Some symmetry relations, which can be derived by acting on $\ASM(n)$ with transposition or
anticlockwise quarter-turn rotation, and using~\eqref{TQ}, are
\begin{align}
\notag Z^\qua_n(x,y;z_1,z_2,z_3,z_4)&=Z^\qua_n(x,y;z_4,z_3,z_2,z_1)\\
\notag&=x^{n(n-1)/2}\,(z_1z_2z_3z_4)^{n-1}\,
Z^\qua_n\bigl(\tfrac{1}{x},\tfrac{y}{x};\tfrac{1}{z_2},\tfrac{1}{z_3},\tfrac{1}{z_4},\tfrac{1}{z_1}\bigr),\\
\notag\Z^\qua_n(x,y;z_1,z_2,z_3,z_4)&=(z_1z_2z_3z_4)^{n-1}\,\Z^\qua_n(x,y;\tfrac{1}{z_4},\tfrac{1}{z_3},\tfrac{1}{z_2},\tfrac{1}{z_1})\\
\notag&=x^{n(n-1)/2}\,\Z^\qua_n\bigl(\tfrac{1}{x},\tfrac{y}{x};z_2,z_3,z_4,z_1\bigr),\\
\notag Z^\tri_n(x,y;z_1,z_2,z_3)&=x^{n(n-1)/2}\,(z_1z_2z_3)^{n-1}\,Z^\tri_n(\tfrac{1}{x},\tfrac{y}{x};
\tfrac{1}{z_2},\tfrac{1}{z_1},\tfrac{1}{z_3}),\\
\notag Z^\adj_n(x,y;z_1,z_2)&=Z^\adj_n(x,y;z_2,z_1)\\
\notag&=x^{n(n-1)/2}\,(z_1z_2)^{n-1}\,\Z^\adj_n\bigl(\tfrac{1}{x},\tfrac{y}{x};\tfrac{1}{z_1},\tfrac{1}{z_2}\bigr),\\
\notag Z^\opp_n(x,y;z_1,z_2)&=Z^\opp_n(x,y;z_2,z_1)\\
\notag&=x^{n(n-1)/2}\,(z_1z_2)^{n-1}\,Z^\opp_n\bigl(\tfrac{1}{x},\tfrac{y}{x};\tfrac{1}{z_1},\tfrac{1}{z_2}\bigr),\\
\notag Z_n(x,y;z)&=x^{n(n-1)/2}\,z^{n-1}\,Z_n\bigl(\tfrac{1}{x},\tfrac{y}{x};\tfrac{1}{z}\bigr),\\
\label{Zsymm}Z_n(x,y)&=x^{n(n-1)/2}\,z^{n-1}\,Z_n\bigl(\tfrac{1}{x},\tfrac{y}{x}).
\end{align}

Some identities involving specializations of boundary parameters to~0,
which follow from the properties of ASMs with a 1 in a corner, as discussed near the end of Section~\ref{stat}, are
\begin{gather}
\notag Z^\qua_n(x,y;z_1,0,z_2,z_3)=Z^\tri_n(x,y;z_1,0,z_3)=Z^\adj_{n-1}(x,y;z_1,z_3),\\
\notag \Z^\adj_n(x,y;z,0)=Z^\opp_n(x,y;z,0)=Z_{n-1}(x,y;z),\\
\label{0}Z^\adj_n(x,y;z,0)=Z_n(x,y;0)=Z_{n-1}(x,y).
\end{gather}

It will sometimes be useful to refer to boundary parameter coefficients in the adjacent-boundary doubly-refined
and singly-refined ASM generating functions.  In particular, define
\begin{align}
\notag Z^\adj_n(x,y)_{k_1,k_2}&=\text{coefficient of $z_1^{k_1}z_2^{k_2}$ in }Z^\adj_n(x,y;z_1,z_2),\\
\label{Zcoeff}Z_n(x,y)_k&=\text{coefficient of $z^k$ in }Z_n(x,y;z).
\end{align}

These coefficients satisfy identities which correspond to identities of~\eqref{1}--\eqref{0}.  For example,
\begin{gather}
\notag\textstyle\sum_{k_2=0}^{n-1}Z^\adj_n(x,y)_{k_1,k_2}=Z_n(x,y)_{k_1},\qquad\sum_{k=0}^{n-1}Z_n(x,y)_k=Z_n(x,y),\\
\notag Z^\adj_n(x,y)_{k_1,k_2}=Z^\adj_n(x,y)_{k_2,k_1},\qquad Z_n(x,y)_k=x^{n(n-1)/2}\,Z_n(\tfrac{1}{x},\tfrac{y}{x})_{n-1-k},\\
\notag Z^\adj_n(x,y)_{k,0}=Z_{n-1}(x,y)\,\delta_{k,0},\qquad Z^\adj_n(x,y)_{k+1,n-1}=x^{n-1}\,Z_{n-1}(x,y)_k,\\
\label{Zcoeffid}Z_n(x,y)_0=Z_{n-1}(x,y).\end{gather}

Furthermore, the properties of ASMs in which a 1 on a boundary is separated from a corner by a single zero,
as discussed at the end of Section~\ref{stat}, lead to identities such as
\begin{align}\notag Z^\adj_n(x,y)_{k,1}&=\textstyle xZ_{n-1}(x,y)_{k-1}+y\,\sum_{i=k}^{n-2}Z_{n-1}(x,y)_i-yZ_{n-1}(x,y)\delta_{k,0},\\
\label{Zcoeffid2}Z_n(x,y)_1&=\textstyle xZ_{n-1}(x,y)+y\,\sum_{k=1}^{n-2}k\,Z_{n-1}(x,y)_k.\end{align}

\section{Previously-known and related results}\label{prev}
In this section, an account is given of results for the exact enumeration of $n\times n$ ASMs
(for arbitrary, finite $n$), involving any of the six
statistics of~\eqref{numuA}--\eqref{rho}, or any of the associated
generating functions of~\eqref{Zquad},~\eqref{Zdef} or~\eqref{Zaltdef}.

Most of these results have appeared elsewhere in the literature
(although possibly in different formulations, or with different notation),
but some new results which are closely related to previously-known results are also presented.
The main new results in this section are~\eqref{ff1} and some cases of~\eqref{ff2},
for ASM generating functions in which the bulk parameters are related by $y=x+1$, and~\eqref{Xid},~\eqref{ZmultASM} and~\eqref{schurgen},
for a generating function, which will be introduced in~\eqref{ZrowASM}, associated with several rows (or several columns) of ASMs.
Derivations of the new results will be given in this section and in Section~\ref{proof}.

Various cases for the values of the parameters $x$ and $y$, associated with the
bulk statistics~\eqref{numuA}, will first be considered separately:
$y=0$ in Section~\ref{prev2},
$y=x+1$ in Section~\ref{prev3},
$x=y=1$ in Section~\ref{prev4},
and $x$ and $y$ arbitrary in Section~\ref{prev1}.
Further results will then be discussed in Sections~\ref{gensect}--\ref{invsymm}.

\subsection{Bulk parameter $y=0$}\label{prev2}
The case in which the bulk parameter $y$ is~$0$ corresponds simply to the enumeration of permutation matrices, with prescribed
values of the inversion number of the associated permutations, and prescribed positions of~$1$'s on
the boundaries of the matrices.

Let $x$-numbers and the $x$-factorial be defined, as usual, as $[n]_x=1+x+\ldots+x^{n-1}$,
and $[n]_x!=[n]_x[n-1]_x\ldots[1]_x$.

For $y=0$, the quadruply-refined ASM generating function is explicitly
\begin{multline}\label{perm1}Z^\qua_n(x,0;z_1,z_2,z_3,z_4)=\\
\shoveleft{\qquad x^2z_1z_2z_3z_4
\textstyle\sum_{0\le i<j\le n-3}\bigl(x^{n+i-j-3}z_4^iz_2^{n-j-3}+x^{n-i+j-4}z_4^{n-i-3}z_2^j\bigr)\;\times}\\
\shoveright{\textstyle\sum_{0\le i<j\le n-3}\bigl(x^{n+i-j-3}z_1^iz_3^{n-j-3}+x^{n-i+j-4}z_1^{n-i-3}z_3^j\bigr)\;[n\!-\!4]_x!\;+}\\
\bigl(xz_4z_1\,[n\!-\!2]_{xz_4}\,[n\!-\!2]_{xz_1}+z_1z_2(xz_3z_4)^{n-1}\,[n\!-\!2]_{xz_1}\,[n\!-\!2]_{xz_2}\;+
\qquad\qquad\qquad\qquad\qquad\\
xz_2z_3\,[n\!-\!2]_{xz_2}\,[n\!-\!2]_{xz_3}+
z_3z_4(xz_1z_2)^{n-1}\,[n\!-\!2]_{xz_3}\,[n\!-\!2]_{xz_4}\bigr)\,[n\!-\!3]_x!\;+\\
\bigl(1\,+\,x^{2n-3}(z_1z_2z_3z_4)^{n-1}\bigr)\,[n\!-\!2]_x!,\end{multline}
and the ASM generating functions of~\eqref{Zdef} are explicitly
\begin{align}
\notag
Z^\tri_n(x,0;z_1,z_2,z_3)&=\textstyle\bigl(xz_2\sum_{0\le i<j\le n-3}\bigl(x^{n+i-j-3}z_1^iz_2^{n-j-3}+x^{n-i+j-4}z_1^{n-i-3}z_2^j\bigr)\;+\\
\notag&\quad\qquad\qquad[n\!-\!2]_{xz_1}+z_2(xz_1)^{n-2}\,[n\!-\!2]_{xz_2}\bigr)\,xz_1z_3\,[n\!-\!2]_{xz_3}\,[n\!-\!3]_x!\;+\\
\notag&\hspace{-3mm}\bigl(z_1(xz_2z_3)^{n-1}\,[n\!-\!2]_{xz_1}+xz_2\,[n\!-\!2]_{xz_2}+1+x^{2n-3}(z_1z_2z_3)^{n-1}\bigr)\,[n\!-\!2]_x!\\
\notag Z^\adj_n(x,0;z_1,z_2)&=xz_1z_2\,[n\!-\!1]_{xz_1}\,[n\!-\!1]_{xz_2}\,[n\!-\!2]_x!\,+\,[n\!-\!1]_x!,\\
\notag Z^\opp_n(x,0;z_1,z_2)&=\textstyle\sum_{0\le i<j\le n-1}\bigl(x^{n+i-j-1}\,z_1^i\,z_2^{n-j-1}+x^{n-i+j-2}\,z_1^{n-i-1}\,
z_2^j\bigr)\,[n\!-\!2]_x!,\\
\notag Z_n(x,0;z)&=[n]_{xz}\,[n\!-\!1]_x!,\\
\label{perm2}Z_n(x,0)&=[n]_x!.\end{align}

Each of these formulae is either a standard result
for permutations, or a straightforward variation of such a result, and each formula
can be derived using simple combinatorial arguments.
See, for example, Stanley~\cite[Cor.~1.3.13]{Sta12} for a derivation of the last equation of~\eqref{perm2}.

Note that if~\eqref{perm1} is shown first to be valid, then each equation of~\eqref{perm2} can subsequently be obtained
by setting certain boundary parameters in~\eqref{perm1} to~$1$.
Alternatively, if the last two equations of~\eqref{perm2}
are shown first to be valid, then~\eqref{perm1} and the remaining equations of~\eqref{perm2} can subsequently be obtained by
using the general results~\eqref{quadrel},~\eqref{triprel} (or~\eqref{triprelalt}),~\eqref{opprel} and~\eqref{adjdoub2},
which will be given in Section~\ref{newres}.

In interpreting~\eqref{perm1}, note also that the seven main terms on the RHS of~\eqref{perm1} (i.e., the term ending in $[n-4]_x!$,
the four terms ending
in $[n-3]_x!$ and the two terms ending in $[n-2]_x!$) correspond to sums over
the sets of $n\times n$ permutation matrices $A$ with $(A_{11},A_{1n},A_{nn},A_{n1})$ equal to
$(0,0,0,0)$, $(0,0,1,0)$, $(0,0,0,1)$, $(1,0,0,0)$, $(0,1,0,0)$, $(1,0,1,0)$ or $(0,1,0,1)$, respectively.
In subsequent formulae for quadruply-refined ASM generating functions (e.g.,~\eqref{ff1},~\eqref{unwquad}
and~\eqref{quadrel}),
the RHS will again consist of seven or eight main terms, but these terms will no longer correspond simply to sums over
sets of ASMs with fixed values of the four corner entries.

\subsection{Bulk parameters satisfying $y=x+1$}\label{prev3}
The case in which the bulk parameters are related by $y=x+1$ is closely related to
$x$-determinants (or, in the notation usually used, $\lambda$-determinants) of matrices,
domino tilings of an Aztec diamond, tournaments, and the free fermion case of the six-vertex model.
(Note that the sub-case $x=1$ and $y=2$, which corresponds to the so-called $2$-enumeration of ASMs,
is often considered separately.)
Due to certain combinatorial and algebraic simplifications, it is again possible to obtain explicit formulae.

For $y=x+1$, the quadruply-refined ASM generating function is given explicitly by
\begin{multline}\label{ff1}
(xz_4z_1\!+\!z_4\!+\!z_1\!-\!1)(xz_1z_2\!-\!xz_1\!-\!xz_2\!-\!1)\;\times\\
(xz_2z_3\!+\!z_2\!+\!z_3\!-\!1)(xz_3z_4\!-\!xz_3\!-\!xz_4\!-\!1)\;Z^\qua_n(x,x\!+\!1;z_1,z_2,z_3,z_4)=\\
\shoveleft{z_1z_2z_3z_4(xz_1z_3\!+\!1)(xz_2z_4\!+\!1)\bigl((xz_1\!+\!1)(xz_2\!+\!1)(xz_3\!+\!1)(xz_4\!+\!1)\bigr)^{n-2}(x\!+\!1)^{(n-4)(n-5)/2}\;-}\\
\Bigl((z_2\!-\!1)(z_3\!-\!1)z_4z_1\bigl((xz_4\!+\!1)(xz_1\!+\!1)\bigr)^{n-2}\;+\qquad\qquad\qquad\qquad\qquad\\[-1mm]
\qquad\qquad\qquad\qquad\qquad(z_4\!-\!1)(z_1\!-\!1)z_2z_3\bigl((xz_2\!+\!1)(xz_3\!+\!1)\bigr)^{n-2}\Bigr)\,\times\\
\qquad(xz_1z_2\!-\!xz_1\!-\!xz_2\!-\!1)(xz_3z_4\!-\!xz_3\!-\!xz_4\!-\!1)(x\!+\!1)^{(n-3)(n-4)/2}\;-\\
\Bigr((z_3\!-\!1)(z_4\!-\!1)z_1z_2(z_3z_4)^{n-1}\bigl((xz_1\!+\!1)(xz_2\!+\!1)\bigr)^{n-2}\;+\qquad\qquad\qquad\\[-1mm]
\qquad\qquad\qquad(z_1\!-\!1)(z_2\!-\!1)z_3z_4(z_1z_2)^{n-1}\bigl((xz_3\!+\!1)(xz_4\!+\!1)\bigr)^{n-2}\Bigr)\;\times\\
\qquad(xz_4z_1\!+\!z_4\!+\!z_1\!-\!1)(xz_2z_3\!+\!z_2\!+\!z_3\!-\!1)\,x^n(x\!+\!1)^{(n-3)(n-4)/2}\;+\\
(z_1\!-\!1)(z_2\!-\!1)(z_3\!-\!1)(z_4\!-\!1)\Bigl((xz_1z_2\!-\!xz_1\!-\!xz_2\!-\!1)(xz_3z_4\!-\!xz_3\!-\!xz_4\!-\!1)\;+\qquad\qquad\\
(xz_4z_1\!+\!z_4\!+\!z_1\!-\!1)(xz_2z_3\!+\!z_2\!+\!z_3\!-\!1)(z_1z_2z_3z_4)^{n-1}x^{2n-1}\Bigr)(x\!+\!1)^{(n-2)(n-3)/2},
\end{multline}
and the ASM generating functions of~\eqref{Zdef} are given explicitly by
\begin{align}
\notag&\hspace*{-45mm}(xz_1z_3\!+\!z_1\!+\!z_3\!-\!1)(xz_2z_3\!-\!xz_2\!-\!xz_3\!-\!1)Z^\tri_n(x,x\!+\!1;z_1,z_2,z_3)=\\
\notag&\hspace*{-35mm}-z_1z_3(xz_1z_2\!+\!1)\bigl((xz_1\!+\!1)(xz_2\!+\!1)\bigr)^{n-2}(xz_3\!+\!1)^{n-1}(x\!+\!1)^{(n-3)(n-4)/2}\;+\\
\notag&\hspace*{-35mm}(z_2\!-\!1)(z_3\!-\!1)(xz_1z_3\!+\!z_1\!+\!z_3\!-\!1)z_1(z_2z_3)^{n-1}(xz_1\!+\!1)^{n-2}x^n(x\!+\!1)^{(n-2)(n-3)/2}\;-\\
\notag&\hspace*{-30mm}(z_1\!-\!1)(z_3\!-\!1)(xz_2z_3\!-\!xz_2\!-\!xz_3\!-\!1)(xz_2\!+\!1)^{n-2}(x\!+\!1)^{(n-2)(n-3)/2},\\
\notag&\hspace*{-45mm}(xz_1z_2\!+\!z_1\!+\!z_2\!-\!1)Z^\adj_n(x,x\!+\!1;z_1,z_2)=
z_1z_2\bigl((xz_1\!+\!1)(xz_2\!+\!1)\bigr)^{n-1}(x\!+\!1)^{(n-2)(n-3)/2}\hspace{13mm}\\
\notag&\hspace*{45mm}-\,(z_1\!-\!1)(z_2\!-\!1)\,(x\!+\!1)^{(n-1)(n-2)/2},\qquad\\
\notag\hspace*{15mm}Z^\opp_n(x,x\!+\!1;z_1,z_2)&=(xz_1z_2\!+\!1)\,\bigl((xz_1\!+\!1)(xz_2\!+\!1)\bigr)^{n-2}\,(x\!+\!1)^{(n-2)(n-3)/2},\\
\notag Z_n(x,x\!+\!1;z)&=(xz\!+\!1)^{n-1}\,(x\!+\!1)^{(n-1)(n-2)/2},\\
\label{ff2}Z_n(x,x\!+\!1)&=(x\!+\!1)^{n(n-1)/2}.\end{align}

Each of these formulae is either a previously-known result,
or can be obtained relatively easily from such results.  More specifically, the last three cases of~\eqref{ff2} have appeared,
in various forms, in the literature (see the references at the end of this section), but it seems that~\eqref{ff1} and the first
two cases of~\eqref{ff2} have not.  A formula which can be regarded as a generalization of the last three cases of~\eqref{ff2}
will be given near the end of Section~\ref{gensect}.

A derivation of~\eqref{ff1}--\eqref{ff2}, based on a result of Robbins and Rumsey~\cite[Sec.~5]{RobRum86} for $x$-determinants,
will now be given, since this derivation involves the use of a quadratic relation satisfied by the quadruply-refined ASM
generating function, and thereby
shares some features with new material which will be presented in
Section~\ref{newres}.
An alternative derivation, based on the Izergin--Korepin formula for the partition function of the six-vertex model with DWBC
and the Cauchy double alternant evaluation, will be given in Section~\ref{add3}.

For an $n\times n$ matrix of indeterminates, $\bigl(M_{ij}\bigr)_{1\le i,j\le n}$, define
\begin{equation}\label{lambdadet}\mathcal{Z}_n(x,M)=
\textstyle\sum_{A\in\ASM(n)}x^{\nu(A)}\,(x+1)^{\mu(A)}\,\prod_{i,j=1}^n(M_{ij})^{A_{ij}},\end{equation}
this being the so-called $x$-determinant of $M$,
as introduced by Robbins and Rumsey~\cite[Sec.~5]{RobRum86}.
It can be seen that~$\mathcal{Z}_n(-1,M)$ is the standard determinant of $M$,
since the RHS of~\eqref{lambdadet} is then a sum over all $n\times n$ permutation matrices $A$, with~$\nu(A)$
being the number of inversions in the permutation associated with $A$.

It follows from a result of Robbins and Rumsey~\cite[Sec.~5]{RobRum86} that
\begin{equation}\label{lambdaDesJac}\mathcal{Z}_n(x,M)\,\mathcal{Z}_{n-2}(x,M_\mathrm{C})=\mathcal{Z}_{n-1}(x,M_\mathrm{TL})\,
\mathcal{Z}_{n-1}(x,M_\mathrm{BR})+x\,\mathcal{Z}_{n-1}(x,M_\mathrm{TR})\,
\mathcal{Z}_{n-1}(x,M_\mathrm{BL}),\end{equation}
where $M_\mathrm{TL}$, $M_\mathrm{TR}$, $M_\mathrm{BR}$ and $M_\mathrm{BL}$ denote the $(n-1)\times(n-1)$ submatrices
corresponding to the top-left, top-right, bottom-right and bottom-left corners of~$M$, respectively,
and~$M_\mathrm{C}$ denotes the central $(n-2)\times(n-2)$ submatrix of~$M$.
For $x=-1$,~\eqref{lambdaDesJac} is the Desnanot--Jacobi determinant identity, which
will be discussed in more detail in Section~\ref{DesJacsect}.

Taking $M$ in~\eqref{lambdaDesJac} to be
\begin{equation}M=\begin{pmatrix}1&z_1&\ldots&z_1^{n-2}&(z_1z_2)^{n-1}\\
z_4&1&\ldots&1&z_2^{n-2}\\
\vdots&\vdots&&\vdots&\vdots\\
z_4^{n-2}&1&\ldots&1&z_2\\
(z_3z_4)^{n-1}&z_3^{n-2}&\ldots&z_3&1\end{pmatrix},\end{equation}
it follows, using definitions from~\eqref{Zquad}, \eqref{Zdef},~\eqref{Zaltdef} and~\eqref{lambdadet}, that
the quadruply-refined ASM generating function satisfies the quadratic relation
\begin{multline}\label{ffquad}Z^\qua_n(x,x\!+\!1;z_1,z_2,z_3,z_4)\,Z_{n-2}(x,x\!+\!1)=
Z^\adj_{n-1}(x,x\!+\!1;z_4,z_1)\,Z^\adj_{n-1}(x,x\!+\!1;z_2,z_3)\;+\\
xz_1z_2z_3z_4\,\Z^\adj_{n-1}(x,x\!+\!1;z_1,z_2)\,\Z^\adj_{n-1}(x,x\!+\!1;z_3,z_4).\end{multline}
Note that a quadratic relation for arbitrary $x$ and $y$ will be given in~\eqref{quadrel}, and that a
different quadratic relation for the present case can be obtained by setting $y=x+1$ in~\eqref{quadrel}.

Setting $z_1=z_2=z_3=z_4=1$ in~\eqref{ffquad} gives
\begin{equation}Z_n(x,x\!+\!1)\,Z_{n-2}(x,x\!+\!1)=(x\!+\!1)\,Z_{n-1}(x,x\!+\!1)^2,\end{equation}
which, together with~$Z_1(x,x\!+\!1)=1$ and $Z_2(x,x\!+\!1)=x\!+\!1$, gives the formula in~\eqref{ff2} for $Z_n(x,x\!+\!1)$.

Setting $z_2=z_3=z_4=1$ in~\eqref{ffquad}, and relabelling $z_1$ as $z$, gives
\begin{equation}Z_n(x,x\!+\!1;z)\,Z_{n-2}(x,x\!+\!1)=(xz\!+\!1)\,Z_{n-1}(x,x\!+\!1;z)\,Z_{n-1}(x,x\!+\!1),\end{equation}
which, together with $Z_1(x,x\!+\!1;z)=1$ and the formula for $Z_n(x,x\!+\!1)$, gives the formula in~\eqref{ff2} for $Z_n(x,x\!+\!1;z)$.
The formula for $Z^\opp_n(x,x\!+\!1;z_1,z_2)$ can be obtained similarly, by setting $z_2=z_4=1$ in~\eqref{ffquad},
and using the formulae for $Z_n(x,x\!+\!1;z)$ and $Z_n(x,x\!+\!1)$.

Setting $z_2=z_3=1$ in~\eqref{ffquad}, and relabelling $z_4$ as $z_2$, gives
\begin{multline}
Z^\adj_n(x,x\!+\!1;z_1,z_2)\,Z_{n-2}(x,x\!+\!1)=\\
Z^\adj_{n-1}(x,x\!+\!1;z_1,z_2)\,Z_{n-1}(x,x\!+\!1)+
xz_1z_2\,Z_{n-1}(x,x\!+\!1;z_1)\,Z_{n-1}(x,x\!+\!1;z_2),\end{multline}
which, together with $Z^\adj_1(x,x\!+\!1;z_1,z_2)=1$, gives
\begin{equation}\label{ffZadj}Z^\adj_n(x,x\!+\!1;z_1,z_2)=Z_{n-1}(x,x\!+\!1)\,\biggl(1+
xz_1z_2\,\sum_{i=1}^{n-1}\frac{Z_i(x,x\!+\!1;z_1)\,Z_i(x,x\!+\!1;z_2)}
{Z_{i-1}(x,x\!+\!1)\,Z_i(x,x\!+\!1)}\biggr),\end{equation}
where, in the sum over $i$, $Z_0(x,x\!+\!1)$ is taken to be 1.
Substituting the formulae for $Z_i(x,x\!+\!1;z)$ and $Z_i(x,x\!+\!1)$ into~\eqref{ffZadj},
and then performing the sum over $i$ and simplifying,
gives the formula in~\eqref{ff2} for $Z^\adj_n(x,x\!+\!1;z_1,z_2)$.

Finally, the formulae in~\eqref{ff1}--\eqref{ff2} for $Z^\qua_n(x,x\!+\!1;z_1,z_2,z_3,z_4)$ and
$Z^\tri_n(x,x\!+\!1;z_1,z_2,z_3)$ can be obtained
using~\eqref{ffquad}, together with the formulae for $Z^\adj_n(x,x\!+\!1;z_1,z_2)$ and
$Z_n(x,x\!+\!1)$, and a similar formula for $\Z^\adj_n(x,x\!+\!1;z_1,z_2)$
(which can be obtained from the formula for $Z^\adj_n(x,x\!+\!1;z_1,z_2)$
and the relevant equation from~\eqref{Zsymm}).

For further information regarding various aspects of the case $y=x+1$ and closely related cases,
including details of several alternative methods for obtaining formulae such as~\eqref{ff1}--\eqref{ff2},
see, for example,
Bogoliubov, Pronko and Zvonarev~\cite[Sec.~5]{BogProZvo02},
Bosio and van Leeuwen~\cite{BosVan12},
Bousquet-M\'elou and Habsieger~\cite[Sec.~5]{BouHab93},
Bressoud~\cite{Bre01},
Brualdi and Kirkland~\cite{BruKir05},
Brubaker, Bump and Friedberg~\cite{BruBumFri11a,BruBumFri11b},
Bump, McNamara and Nakasuji~\cite{BumMcnNak11},
Chapman~\cite{Cha01},
Ciucu~\cite{Ciu96},~\cite[Thm.~6.1]{Ciu98},
Colomo and Pronko~\cite[Secs.~4.2 \&~5.2]{ColPro05a},~\cite[Sec.~4.3]{ColPro06},
Di Francesco~\cite{Dif13},
Elkies, Kuperberg, Larsen and Propp~\cite{ElkKupLarPro92a,ElkKupLarPro92b},
Eu and Fu~\cite{EuFu05},
Ferrari and Spohn~\cite{FerSpo06},
Hamel and King~\cite[Cor.~5.1]{HamKin07},
Kuo~\cite[Sec.~3]{Kuo04},
Kuperberg~\cite[Thm.~3]{Kup02},
Langer~\cite{Lan13},
Lascoux~\cite{Las07a},
McNamara~\cite{Mcn10},
Mills, Robbins and Rumsey~\cite[Sec.~6]{MilRobRum83},
Okada~\cite[Thm.~2.4(1), third eq.]{Oka06},
Rosengren~\cite[Sec.~9]{Ros09},
Striker~\cite[Sec.~5]{Str09},~\cite[Sec.~6]{Str11b},
Tokuyama~\cite[Cor.~3.4]{Tok88}, and Yang~\cite{Yan91}.

\subsection{Bulk parameters $x=y=1$}\label{prev4}
The case in which the bulk parameters~$x$ and~$y$ are both~1 involves
numbers of ASMs with prescribed configurations on certain boundaries, but without
prescribed values of bulk statistics.  The cases of zero, one or
two boundaries will be considered together first, followed by the cases of three or four boundaries.

Define unrefined, singly-refined, opposite-boundary doubly-refined and adjacent-boundary doubly-refined ASM numbers as, respectively,
\begin{align}\notag\A_n&=|\ASM(n)|,\\
\notag\A_{n,k}&=|\{A\in\ASM(n)\mid A_{1,k+1}=1\}|,\\
\notag\A^\opp_{n,k_1,k_2}&=|\{A\in\ASM(n)\mid A_{1,k_1+1}=A_{n,n-k_2}=1\}|,\\
\label{AA}\A^\adj_{n,k_1,k_2}&=|\{A\in\ASM(n)\mid A_{1,k_1+1}=A_{k_2+1,1}=1\}|,\end{align}
for $0\le k,k_1,k_2\le n-1$, with the numbers being~0
for~$k$,~$k_1$ or~$k_2$ outside this range.
These numbers are therefore related to generating functions of~\eqref{Zdef}--\eqref{Zaltdef} by
\begin{align}\notag Z_n(1,1)&=\A_n,\\
\notag Z_n(1,1;z)&=\textstyle\sum_{k=0}^{n-1}\,\A_{n,k}\,z^k,\\
\notag Z^\opp_n(1,1;z_1,z_2)&=\textstyle\sum_{k_1,k_2=0}^{n-1}\,\A^\opp_{n,k_1,k_2}\,z_1^{k_1}z_2^{k_2},\\
\notag Z^\adj_n(1,1;z_1,z_2)&=\textstyle\sum_{k_1,k_2=0}^{n-1}\,\A^\adj_{n,k_1,k_2}\,z_1^{k_1}z_2^{k_2},\\
\label{Z1}\textstyle\Z^\adj_n(1,1;z_1,z_2)&=\textstyle\sum_{k_1,k_2=0}^{n-1}\,\A^\adj_{n,n-1-k_1,n-1-k_2}\,z_1^{k_1}z_2^{k_2},\end{align}
and to the boundary-parameter coefficients of~\eqref{Zcoeff} by
\begin{equation}\A_{n,k}=Z_n(1,1)_k,\qquad\A^\adj_{n,k_1,k_2}=Z^\adj_n(1,1)_{k_1,k_2}.\end{equation}

The ASM numbers of~\eqref{AA} satisfy various elementary identities related to
their definitions, symmetry properties of ASMs or properties of ASMs with a 1 in a corner,
and which correspond to identities, such as~\eqref{1}--\eqref{0} and~\eqref{Zcoeffid}, satisfied by the generating functions.
Some examples of these identities are
\begin{gather}
\notag\textstyle\sum_{k=0}^{n-1}\A_{n,k}=\A_n,\qquad \A_{n,k}=\A_{n,n-1-k},\qquad\A_{n,0}=\A_{n-1},\\
\notag\textstyle\sum_{k_2=0}^{n-1}\A^\opp_{n,k_1,k_2}=\sum_{k_2=0}^{n-1}\A^\adj_{n,k_1,k_2}=\A_{n,k_1},\\
\notag\A^\opp_{n,k_1,k_2}=\A^\opp_{n,k_2,k_1}=\A^\opp_{n,n-1-k_1,n-1-k_2},\quad\A^\adj_{n,k_1,k_2}=\A^\adj_{n,k_2,k_1},\\
\label{A1}\A^\adj_{n,k,0}=\A_{n-1}\,\delta_{k,0},\qquad\A^\opp_{n,k,0}=\A^\adj_{n,k+1,n-1}=\A_{n-1,k}.\end{gather}
Furthermore, some identities related to properties of ASMs in which a 1 on a boundary is separated from a corner by a single zero,
and which can be obtained from~\eqref{Zcoeffid2} and the first two identities of~\eqref{A1}, are
\begin{equation}\label{A1id}\A^\adj_{n,k,1}=\textstyle\sum_{i=k-1}^{n-2}\A_{n-1,i}-\A_{n-1}\,\delta_{k,0},\qquad
\A_{n,1}=\textstyle\A_{n-1}+\sum_{k=1}^{n-2}k\,\A_{n-1,k}=\tfrac{n}{2}\,\A_{n-1}.\end{equation}

Proceeding to more general identities, for which derivations using combinatorial arguments are not currently known,
formulae which give the ASM numbers of~\eqref{AA} explicitly are
\begin{align}\label{ASM}\A_n&=\textstyle\prod_{i=0}^{n-1}\frac{(3i+1)!}{(n+i)!},\\
\label{RASM}\A_{n,k}&=\begin{cases}\frac{(n+k-1)!\:(2n-k-2)!}{k!\:(n-k-1)!\:(2n-2)!}\:\prod_{i=0}^{n-2}\!\frac{(3i+1)!}{(n+i-1)!},&0\le k\le n-1,\\
0,&\text{otherwise,}\end{cases}\\
\notag\A^\opp_{n,k_1,k_2}&=\textstyle\frac{1}{\A_{n-1}}\sum_{i=0}^{\min(k_1,n-k_2-1)}\bigl(
\A_{n,k_1-i}\,\A_{n-1,k_2+i}+\A_{n-1,k_1-i-1}\,\A_{n,k_2+i}\;-\\[-1.7mm]
\label{oppRASM}&\hspace{54mm}\A_{n,k_1-i-1}\,\A_{n-1,k_2+i}-\A_{n-1,k_1-i-1}\,\A_{n,k_2+i+1}\bigr),\\
\label{adjRASM}\A^\adj_{n,k_1,k_2}&=\begin{cases}\A_{n-1},&k_1=k_2=0,\\
\binom{k_1+k_2-2}{k_1-1}\A_{n-1}-\sum_{i=1}^{k_1}\sum_{j=1}^{k_2}\binom{k_1+k_2-i-j}{k_1-i}\A^\opp_{n,i-1,n-j},&1\le k_1,k_2\le n-1,\\
0,&\text{otherwise,}\end{cases}\end{align}

It follows from~\eqref{ASM}--\eqref{RASM} that the unrefined and singly-refined ASM numbers
satisfy simple recursion relations, such as
\begin{align}
\notag\A_n&=\tfrac{(n-1)!\,(3n-2)!}{(2n-2)!\,(2n-1)!}\,\A_{n-1},\\
\notag\A_n\,\A_{n-2}&=\tfrac{3(3n-2)(3n-4)}{4(2n-1)(2n-3)}\,\A_{n-1}^2,\\
\label{MRRrec}k(2n\!-\!k\!-\!1)\A_{n,k}&=(n\!-\!k)(n\!+\!k\!-\!1)\,\A_{n,k-1}.\end{align}
It also follows that the singly-refined ASM generating function at $x=y=1$ can be expressed in terms of the Gaussian hypergeometric
function as
\begin{equation}\label{hyp}Z_n(1,1;z)=\A_{n-1}\:{}_2F_1\!\left[\begin{matrix}1\!-\!n,\,n\\\,2\!-\!2n\end{matrix}\,;\,z\right],\end{equation}
and satisfies the hypergeometric differential equation
\begin{equation}\label{diff}z(1\!-\!z)\tfrac{d^2}{dz^2}Z_n(1,1;z)+2(1\!-\!n\!-\!z)\tfrac{d}{dz}Z_n(1,1;z)+n(n\!-\!1)Z_n(1,1;z)=0.\end{equation}

Furthermore, the singly-refined ASM numbers satisfy, for $0\le k\le n-1$,
\begin{equation}\label{RASMlinear}\textstyle\A_{n,k}=\sum_{i=0}^k(-1)^i\binom{n+k-1}{k-i}\A_{n,i}.\end{equation}

Recursion relations which involve the opposite-boundary and adjacent-boundary doubly-refined ASM numbers are
\begin{align}
\notag(\A^\opp_{n,k_1-1,k_2}-\A^\opp_{n,k_1,k_2-1})\,\A_{n-1}&=\A_{n,k_1-1}\,\A_{n-1,k_2-1}-\A_{n,k_1}\,\A_{n-1,k_2-1}\;-\\
\label{oppRASMrec}&\hspace{28mm}\A_{n-1,k_1-1}\,\A_{n,k_2-1}+\A_{n-1,k_1-1}\,\A_{n,k_2},\\
\label{adjRASMrec}\A^\adj_{n,k_1-1,k_2}+\A^\adj_{n,k_1,k_2-1}-\A^\adj_{n,k_1,k_2}&=
\A^\opp_{n,k_1-1,n-k_2}-(\delta_{k_1,1}-\delta_{k_1,0})(\delta_{k_2,1}-\delta_{k_2,0})\,\A_{n-1}.\end{align}
It follows that, in terms of generating functions,~\eqref{oppRASMrec} can be written as
\begin{multline}\label{Zoppid}
(z_1\!-\!z_2)\,Z^\opp_n(1,1;z_1,z_2)\,\A_{n-1}=(z_1\!-\!1)\,z_2\,Z_n(1,1;z_1)\,Z_{n-1}(1,1;z_2)\;-\\
z_1\,(z_2\!-\!1)\,Z_{n-1}(1,1;z_1)\,Z_n(1,1;z_2),\end{multline}
and~\eqref{adjRASMrec} can be written as
\begin{equation}\label{Zoppadjid}(z_1\!+\!z_2\!-\!1)\,Z^\adj_n(1,1;z_1,z_2)=
z_1\,z_2^n\,Z^\opp_n(1,1;z_1,\tfrac{1}{z_2})
-(z_1\!-\!1)(z_2\!-\!1)\,\A_{n-1}.\end{equation}
The relation~\eqref{Zoppadjid} will be used in Section~\ref{new3} for the derivation of results for the quadruply-refined
ASM generating function at $x=y=1$.

The opposite-boundary doubly-refined ASM generating function at $x=y=1$ can also be expressed as
\begin{multline}\label{schur}
Z^\opp_n(1,1;z_1,z_2)=3^{-n(n-1)/2}\,\bigl(q^2(z_1+q)(z_2+q)\bigr)^{n-1}\times\\
s_{(n-1,n-1,\ldots,2,2,1,1)}\bigl(\tfrac{qz_1+1}{z_1+q},\tfrac{qz_2+1}{z_2+q},
\,\underbrace{1,\ldots,1}_{2n-2}\,\bigr)\big|_{q=e^{\pm 2\pi i/3}},\end{multline}
where $s_{(n-1,n-1,\ldots,2,2,1,1)}\bigl(\tfrac{qz_1+1}{z_1+q},\tfrac{qz_2+1}{z_2+q},
1,\ldots,1\bigr)$ is the Schur function indexed by the double-staircase
partition $(n-1,n-1,\ldots,2,2,1,1)$, evaluated at the $2n$ parameters $\frac{qz_1+1}{z_1+q},\frac{qz_2+1}{z_2+q},$ $1,\ldots,1$.
Setting $z_1=1$ or $z_2=1$ in~\eqref{schur} gives an expression for the singly-refined ASM generating function at $x=y=1$, while setting
$z_1=z_2=1$ in~\eqref{schur}, and using~\eqref{Z1} and~\eqref{A1}, it follows that
\begin{multline}\label{SSYT}\A_n=3^{-n(n-1)/2}\times\bigl(\text{number of semistandard Young tableaux of shape}\\
(n-1,n-1,\ldots,2,2,1,1)\text{ with entries from }\{1,\ldots,2n\}\bigr).\end{multline}
The product formula~\eqref{ASM} for $\A_n$ can be obtained from~\eqref{SSYT} using the hook-content formula for
semistandard Young tableaux.

A further identity satisfied by the opposite-boundary doubly-refined and unrefined ASM numbers is
\begin{equation}\label{BCS}\det_{0\le k_1,k_2\le n-1}(\A^\opp_{n,k_1,k_2})=(-1)^{n(n+1)/2+1}\,(\A_{n-1})^{n-3}.\end{equation}

Additional expressions for some of the previous ASM numbers or associated ASM generating functions will be discussed in Section~\ref{TSSCPP}.

Proceeding now to the enumeration of ASMs with prescribed configurations on three or four boundaries,
the alternative quadruply-refined ASM generating function at $x=y=1$ is given by
\begin{multline}\label{unwquad}
(z_4z_1\!-\!z_4\!+\!1)(z_1z_2\!-\!z_1\!+\!1)(z_2z_3\!-\!z_2\!+\!1)(z_3z_4\!-\!z_3\!+\!1)\,\Z^\qua_n(1,1;z_1,z_2,z_3,z_4)=\\
\frac{z_1\,z_2\,z_3\,z_4\,\det_{1\le i,j\le 4}
\bigl(z_i^{\;j-1}\,(z_i\!-\!1)^{4-j}\,Z_{n-j+1}(1,1;z_i)\bigr)}{\rule{0ex}{2.3ex}\A_{n-1}\,\A_{n-2}\,\A_{n-3}\,\prod_{1\le i<j\le 4}(z_i-z_j)}\;+
\qquad\qquad\qquad\qquad\qquad\qquad\\
(z_2\!-\!1)(z_3\!-\!1)(z_4z_1\!-\!z_4\!+\!1)(z_1z_2\!-\!z_1\!+\!1)(z_3z_4\!-\!z_3\!+\!1)(z_2z_4)^{n-1}Z^\adj_{n-1}(1,1;\tfrac{1}{z_4},z_1)\;+\\
(z_3\!-\!1)(z_4\!-\!1)(z_1z_2\!-\!z_1\!+\!1)(z_2z_3\!-\!z_2\!+\!1)(z_4z_1\!-\!z_4\!+\!1)(z_1z_3)^{n-1}Z^\adj_{n-1}(1,1;\tfrac{1}{z_1},z_2)\;+\\
(z_4\!-\!1)(z_1\!-\!1)(z_2z_3\!-\!z_2\!+\!1)(z_3z_4\!-\!z_3\!+\!1)(z_1z_2\!-\!z_1\!+\!1)(z_2z_4)^{n-1}Z^\adj_{n-1}(1,1;\tfrac{1}{z_2},z_3)\;+\\
(z_1\!-\!1)(z_2\!-\!1)(z_3z_4\!-\!z_3\!+\!1)(z_4z_1\!-\!z_4\!+\!1)(z_2z_3\!-\!z_2\!+\!1)(z_1z_3)^{n-1}Z^\adj_{n-1}(1,1;\tfrac{1}{z_3},z_4)\;-\\
\\
(z_1\!-\!1)(z_2\!-\!1)(z_3\!-\!1)(z_4\!-\!1)\bigl((z_1z_2\!-\!z_1\!+\!1)(z_3z_4\!-\!z_3\!+\!1)(z_2z_4)^{n-1}\;+\\
(z_2z_3\!-\!z_2\!+\!1)(z_4z_1\!-\!z_4\!+\!1)(z_1z_3)^{n-1}\bigr)\A_{n-2}.\end{multline}

Setting $z_2=1$ in~\eqref{unwquad} (and then relabelling $z_3$ as $z_2$ and $z_4$ as~$z_3$), it follows that the
triply-refined ASM generating function at $x=y=1$ is given by
\begin{multline}\label{unwtrip}
(z_1z_3\!-\!z_3\!+\!1)(z_2z_3\!-\!z_2\!+\!1)z_3^{n-1}Z^\tri_n(1,1;z_1,z_2,\tfrac{1}{z_3})=\\
\frac{z_1\,z_3\,\det_{1\le i,j\le3}
\bigl(z_i^{\;j-1}\,(z_i\!-\!1)^{3-j}\,Z_{n-j+1}(1,1;z_i)\bigr)}{\rule{0ex}{2.3ex}\A_{n-1}\,\A_{n-2}\,\prod_{1\le i<j\le3}(z_i-z_j)}\;+
\qquad\qquad\qquad\qquad\qquad\\
(z_2\!-\!1)(z_3\!-\!1)(z_1z_3\!-\!z_3\!+\!1)z_1z_2^{n-1}Z_{n-1}(1,1;z_1)\;+\\
(z_1\!-\!1)(z_3\!-\!1)(z_2z_3\!-\!z_2\!+\!1)z_3^{n-1}Z_{n-1}(1,1;z_2).\end{multline}

Note that the identities~\eqref{Zoppid} and~\eqref{Zoppadjid}, satisfied by the doubly-refined ASM generating functions at $x=y=1$,
can be regarded as special cases of the identity~\eqref{unwtrip} satisfied by the triply-refined ASM generating function.
More specifically, setting $z_3=1$ in~\eqref{unwtrip} gives~\eqref{Zoppid}, while setting~$z_2=1$
in~\eqref{unwtrip}, and using~\eqref{Zoppid}, gives~\eqref{Zoppadjid}.

Note also that, by using~\eqref{Zoppadjid} to replace each case of an adjacent-boundary doubly-refined ASM generating function
in~\eqref{unwquad} by an opposite-boundary doubly-refined ASM generating function,~\eqref{unwquad} can be
restated as
\begin{multline}\label{unwquadopp}
(z_4z_1\!-\!z_4\!+\!1)(z_1z_2\!-\!z_1\!+\!1)(z_2z_3\!-\!z_2\!+\!1)(z_3z_4\!-\!z_3\!+\!1)\,\Z^\qua_n(1,1;z_1,z_2,z_3,z_4)=\\
\frac{z_1\,z_2\,z_3\,z_4\,\det_{1\le i,j\le 4}
\bigl(z_i^{\;j-1}\,(z_i\!-\!1)^{4-j}\,Z_{n-j+1}(1,1;z_i)\bigr)}{\rule{0ex}{2.3ex}\A_{n-1}\,\A_{n-2}\,\A_{n-3}\,\prod_{1\le i<j\le 4}(z_i-z_j)}\;+
\qquad\qquad\qquad\qquad\qquad\qquad\\
(z_2\!-\!1)(z_3\!-\!1)(z_1z_2\!-\!z_1\!+\!1)(z_3z_4\!-\!z_3\!+\!1)z_4z_1z_2^{n-1}Z^\opp_{n-1}(1,1;z_4,z_1)\;+\\
(z_3\!-\!1)(z_4\!-\!1)(z_2z_3\!-\!z_2\!+\!1)(z_4z_1\!-\!z_4\!+\!1)z_1z_2z_3^{n-1}Z^\opp_{n-1}(1,1;z_1,z_2)\;+\\
(z_4\!-\!1)(z_1\!-\!1)(z_3z_4\!-\!z_3\!+\!1)(z_1z_2\!-\!z_1\!+\!1)z_2z_3z_4^{n-1}Z^\opp_{n-1}(1,1;z_2,z_3)\;+\\
(z_1\!-\!1)(z_2\!-\!1)(z_4z_1\!-\!z_4\!+\!1)(z_2z_3\!-\!z_2\!+\!1)z_3z_4z_1^{n-1}Z^\opp_{n-1}(1,1;z_3,z_4)\;+\\
\shoveleft{\quad(z_1\!-\!1)(z_2\!-\!1)(z_3\!-\!1)(z_4\!-\!1)\bigl((z_1z_2\!-\!z_1\!+\!1)(z_3z_4\!-\!z_3\!+\!1)(z_2z_4)^{n-1}\;+}\\
(z_2z_3\!-\!z_2\!+\!1)(z_4z_1\!-\!z_4\!+\!1)(z_1z_3)^{n-1}\bigr)\A_{n-2}.\end{multline}

The first terms (i.e., the determinant terms) on the RHS of~\eqref{unwquad}--\eqref{unwquadopp}
are proportional to certain cases of a function which will be defined in~\eqref{Zmult}.
Specifically, using~\eqref{Zmult},
the first terms on the RHS of~\eqref{unwquad} (or~\eqref{unwquadopp}) and~\eqref{unwtrip}
are $z_1z_2z_3z_4\,X_n(1,1;z_1,z_2,z_3,z_4)$ and $z_1z_2z_3\,X_n(1,1;z_1,z_2,z_3)$, respectively.
It will also be seen, in~\eqref{schurgen}, that the function~\eqref{Zmult} at $x=y=1$, and hence the first terms
on the RHS of~\eqref{unwquad}--\eqref{unwquadopp}, are related to a certain Schur function.

Further expressions for $\Z^\qua_n(1,1;z_1,z_2,z_3,z_4)$ and $Z^\tri_n(1,1;z_1,z_2,\tfrac{1}{z_3})$,
which differ from \eqref{unwtrip}--\eqref{unwquadopp} in the first terms on each RHS,
will be obtained in Corollaries~\ref{cor9} and \ref{cor10}.

The origins of the results given in this section will now be outlined.

The product formula~\eqref{ASM} was conjectured by Mills, Robbins and Rumsey~\cite[Conj.~1]{MilRobRum82,MilRobRum83},
and first proved by Zeilberger~\cite{Zei96a} and, shortly thereafter, but using a different method, by Kuperberg~\cite{Kup96}.
The product formula~\eqref{RASM} was first proved by Zeilberger~\cite{Zei96b},
and confirms the validity of further conjectures of Mills, Robbins and Rumsey~\cite[Conj.~2]{MilRobRum82,MilRobRum83}.

Alternative proofs of~\eqref{ASM}--\eqref{RASM} have been given by
Colomo and Pronko~\cite[Sec.~5.3]{ColPro05a},~\cite[Sec.~4.2]{ColPro06}, Fischer~\cite{Fis07}, and Stroganov~\cite[Sec.~4]{Str06}.
(See also Razumov and Stroganov~\cite[Sec.~2]{RazStr04b},~\cite[Sec.~2]{RazStr09} for additional details related to the third of these proofs.)

The formulae~\eqref{ASM}--\eqref{RASM} also follow from certain other known results.  For example,
as already indicated,~\eqref{ASM} can be obtained from the result~\eqref{SSYT} and the hook-content formula.
Alternatively,~\eqref{ASM} can be obtained as a special case of a result of Rosengren~\cite[Cor.~8.4]{Ros09}.
Also,~\eqref{RASM}
can be obtained by combining a result of Behrend, Di Francesco and Zinn-Justin~\cite[Thm.~1]{BehDifZin12}
that~$\A_{n,k}$ is the number of descending plane partitions with each part at most~$n$ and exactly~$k$ parts equal to~$n$,
with a result of Mills, Robbins and Rumsey~\cite[Sec.~5]{MilRobRum82} which gives a product formula for the number of such objects.

Among these various derivations of~\eqref{ASM}--\eqref{RASM}, all make essential use of the
Izergin--Korepin formula~\cite[Eq.~(5)]{Ize87} for the partition function of the six-vertex model with DWBC,
or related properties of integrability,
except for
Zeilberger's proof~\cite{Zei96a} of~\eqref{ASM}, which uses
a result of Andrews~\cite{And94} for the number
of totally symmetric self-complementary plane partitions in a $2n\times2n\times2n$ box,
and Fischer's proof~\cite{Fis07}, which uses
an operator formula obtained by Fischer~\cite{Fis06,Fis10}
(see also Colomo and Pronko~\cite[Eq.~(3.3)]{ColPro12}).

The hypergeometric function
expression~\eqref{hyp} is given by Colomo and Pronko \cite[Eq. (2.16)]{ColPro05c}, \cite[Eq. (5.43)]{ColPro05a},
\cite[Eq.~(4.19)]{ColPro06},
and the differential equation~\eqref{diff} is given by Stroganov~\cite[Eq.~(26)]{Str06}.

The relation~\eqref{RASMlinear} was obtained by Fischer~\cite[Sec.~3]{Fis07}, with a corresponding relation for
descending plane partitions, containing a further parameter associated with the sum of the parts of a
descending plane partition, having been obtained previously by Mills, Robbins and Rumsey~\cite[Sec.~5]{MilRobRum82}.
It was also shown by Fischer~\cite[Sec.~4]{Fis07} that the formula~\eqref{RASM} follows from~\eqref{RASMlinear},
the first three equations of~\eqref{A1}, and $\A_{1,0}=1$.

The relations~\eqref{oppRASMrec}--\eqref{Zoppadjid}
for the opposite-boundary and adjacent-boundary doubly-refined ASM numbers
were first obtained by Stroganov~\cite[Sec.~5]{Str06}.  Furthermore,~\eqref{adjRASMrec} is a special
case of a formula obtained by Fischer~\cite[Thm.~1]{Fis12}, and~\eqref{Zoppid} is a special case of
a relation,~\eqref{propeq1}, which will be discussed in Section~\ref{prev1}.  As indicated elsewhere in this section,~\eqref{Zoppid} and~\eqref{Zoppadjid}
are also special cases of~\eqref{unwquad}.

The formula~\eqref{oppRASM} follows easily from~\eqref{oppRASMrec}.
The formula~\eqref{adjRASM} was first obtained by Fischer~\cite[p.~570]{Fis12}, and
can be derived by dividing both sides of~\eqref{Zoppadjid} by
$z_1+z_2-1$ and equating coefficients of $z_1^{k_1}z_2^{k_2}$ on each side.

The Schur function expression~\eqref{schur} was obtained by Di Francesco and Zinn-Justin~\cite[Eqs.~(2.2) \& (2.4)]{DifZin05},
using a result of Okada~\cite[Thm.~2.4(1), second equation]{Oka06}.
A generalization of~\eqref{schur} will be given in~\eqref{schurgen}, and a derivation of~\eqref{schurgen}
will be given in Section~\ref{add2}.

The relation~\eqref{SSYT} was first obtained by Okada~\cite[Thm.~1.2 (A1)]{Oka06}.

The identity~\eqref{BCS} was obtained by Biane, Cantini and Sportiello~\cite[Thm.~1]{BiaCanSpo12}, by
combining~\eqref{schur} with a certain Schur function identity, also obtained by
Biane, Cantini and Sportiello~\cite[Thm.~2]{BiaCanSpo12}.

Some further aspects of the opposite-boundary doubly-refined ASM numbers are discussed by Fischer~\cite[Sec.~6]{Fis10}.

Identities for the quadruply- and triply-refined ASM generating functions at $x=y=1$,
which are essentially equivalent to~\eqref{unwquad}--\eqref{unwtrip},
were obtained recently by Ayyer and Romik~\cite[Thms.~1--3]{AyyRom13}.
The forms of \eqref{unwquad}--\eqref{unwtrip} given here were observed by Colomo~\cite{Col12},
and can be derived by combining a result
which will be given in~\eqref{ZmultASM}, with a result
that the partition function of the six-vertex model with DWBC
is symmetric in all of its spectral parameters at its so-called combinatorial point.
A derivation of~\eqref{unwquad} using this approach will be given in Section~\ref{add2}.
(Roughly speaking, this derivation involves associating spectral parameters~$t_1$,~$t_2$,~$t_3$ and~$t_4$ with
the first row, last column, last row and first column, respectively, of an ASM.  The symmetry of the partition
function in all of its spectral parameters then enables $t_2$ and~$t_4$ to be associated instead with
rows, so that the result~\eqref{ZmultASM},
which involves parameters associated with several rows, can be applied.)

Note that, since setting certain boundary parameters in~\eqref{unwquad} to~1
gives~\eqref{Zoppid},~\eqref{Zoppadjid} and~\eqref{unwtrip}, Section~\ref{add2}
also provides derivations of these other identities.
(For the identity~\eqref{Zoppadjid}, such a derivation essentially depends
only the symmetry of the partition function in all of its spectral parameters, and not on~\eqref{ZmultASM}.)

In the versions of~\eqref{unwquad}--\eqref{unwtrip} obtained by Ayyer and Romik~\cite[Thms.~1--3]{AyyRom13},
slightly different quadruply-refined and triply-refined ASM generating functions are used,
and matrices appear which are related by transposition and column operations to those in the first terms on the RHS of~\eqref{unwquad}--\eqref{unwtrip}.
In particular, the quadruply-refined generating function used there is
$Z'_n(x,y;z_1,z_2,z_3,z_4)=\rule[-2.7ex]{0ex}{2.7ex}\sum_{\substack{A\in\ASM(n)\\A_{11}=A_{1n}=A_{nn}=A_{n1}=0}}\!x^{\nu(A)}\,y^{\mu(A)}\,
z_1^{\rT(A)-1}\,z_2^{n-2-\rR(A)}\,z_3^{\rB(A)-1}\,z_4^{n-2-\rL(A)}$
(at $x=y=1$), which
can be shown straightforwardly to be related
to generating functions used here by
\begin{multline}
z_1\,z_2\,z_3\,z_4\,Z'_n(x,y;z_1,z_2,z_3,z_4)=\\
\Z^\qua_n(x,y;z_1,z_2,z_3,z_4)-(z_2z_4)^{n-1}\,Z^\adj_{n-1}(x,y;\tfrac{1}{z_4},z_1)-(z_2z_4)^{n-1}\,Z^\adj_{n-1}(x,y;\tfrac{1}{z_2},z_3)\;-\\
z_1z_2^{\;n-2}(xz_3)^{n-1}\,\Z^\adj_{n-1}(x,y;z_1,\tfrac{1}{z_2})-z_3z_4^{\;n-2}(xz_1)^{n-1}\,\Z^\adj_{n-1}(x,y;z_3,\tfrac{1}{z_4})\;+\\
\bigl((z_2z_4)^{n-1}+x^{2n-3}(z_1z_3)^{n-1}\bigr)\,Z_{n-2}(x,y).
\end{multline}
The matrices used in some of the formulae of Ayyer and Romik~\cite[Eqs.~(1.8) \&~(1.11)]{AyyRom13}
involve functions $\gamma_n(z)$ and $\delta_n(z)$ (see Ayyer and Romik~\cite[Eq.~(1.7)]{AyyRom13} for definitions), which are
related to the functions used in the matrices in the first terms on the RHS of~\eqref{unwquad}--\eqref{unwtrip} by
\begin{align}\notag
\gamma_n(z)&=\tfrac{4(n-1)!\,(3n-5)!}{(2n-3)!\,(2n-4)!}\,z^2\,Z_{n-2}(1,1;z)-\tfrac{4(2n-3)!\,(2n-5)!}{(n-3)!\,(3n-5)!}\,(z\!-\!1)^2\,Z_n(1,1;z),\\
\label{AR}\delta_n(z)&=\tfrac{4(n-1)!\,(n-4)!\,(3n-5)!\,(3n-8)!}{((2n-4)!)^2\,(2n-5)!\,(2n-7)!}\,z^3\,Z_{n-3}(1,1;z)-
\tfrac{9(9n^2-30n+20)}{2n-5}\,z(z\!-\!1)^2\,Z_{n-1}(1,1;z).\end{align}
These relations can be confirmed (perhaps with the assistance of a computer to verify certain underlying identities)
by applying the definitions of $\gamma_n(z)$, $\delta_n(z)$ and $Z_n(1,1;z)$, and using the product formula~\eqref{RASM} for the
singly-refined ASM numbers which appear in these functions. The fact that the matrices used by Ayyer and Romik
are related to those used here by (transposition and) column operations follows immediately from~\eqref{AR}.
See also Ayyer and Romik~\cite[Thm.~3 \& App.~A]{AyyRom13} for a discussion of this matter.

\subsection{Arbitrary bulk parameters $x$ and $y$}\label{prev1}
The case in which the bulk parameters $x$ and~$y$ are arbitrary is of primary interest in this paper,
and several new results will be presented in Section~\ref{newres}.  In this section, the previously-known results for this case,
which involve the unrefined, singly-refined and opposite-boundary doubly-refined ASM generating functions,
are reviewed.

A determinant formula for the opposite-boundary doubly-refined ASM generating function is
\begin{equation}\label{Zdetdoub}Z^\opp_n(x,y;z_1,z_2)=\det_{0\le i,j\le n-1}\bigl(K_n(x,y;z_1,z_2)_{ij}\bigr),\end{equation}
where
\begin{multline}\label{K}K_n(x,y;z_1,z_2)_{ij}=\\
-\delta_{i,j+1}+\begin{cases}
\sum_{k=0}^{\min(i,j+1)}\binom{i-1}{i-k}\binom{j+1}{k}x^ky^{i-k},&j\le n-3,\\[1.5mm]
\sum_{k=0}^i\sum_{l=0}^k\binom{i-1}{i-k}\binom{n-l-2}{k-l}x^ky^{i-k}z_2^{l+1},&j=n-2,\\[1.5mm]
\sum_{k=0}^i\sum_{l=0}^k\sum_{m=0}^l\binom{i-1}{i-k}\binom{n-l-2}{k-l}x^ky^{i-k}z_1^mz_2^{l-m},&j=n-1.
\end{cases}\end{multline}

Setting $z_2=1$ or $z_1=z_2=1$ in~\eqref{Zdetdoub}--\eqref{K},
and using standard binomial coefficient identities, it follows that
determinant formulae for the singly-refined and unrefined ASM generating functions are
\begin{align}\label{Zdetsing}Z_n(x,y;z)&=\det_{0\le i,j\le n-1}\bigl(K_n(x,y;z)_{ij}\bigr),\\
\label{Zdetun}Z_n(x,y)&=\det_{0\le i,j\le n-1}\bigl(K_n(x,y)_{ij}\bigr),\end{align}
where
\begin{align}
\notag K_n(x,y;z)_{ij}&=K_n(x,y;z,1)_{ij}\\
&=-\delta_{i,j+1}+\begin{cases}
\sum_{k=0}^{\min(i,j+1)}\binom{i-1}{i-k}\binom{j+1}{k}x^ky^{i-k},&j\le n-2,\\
\sum_{k=0}^i\sum_{l=0}^k\binom{i-1}{i-k}\binom{n-l-1}{k-l}x^ky^{i-k}z^l,&j=n-1,\end{cases}\\
\label{K1}K_n(x,y)_{ij}&=K_n(x,y;1,1)_{ij}=\textstyle-\delta_{i,j+1}+\sum_{k=0}^{\min(i,j+1)}\binom{i-1}{i-k}\binom{j+1}{k}x^ky^{i-k}.\end{align}

There are further simplifications if $x=y=1$. For example,
\begin{equation}K_n(1,1)_{ij}=-\delta_{i,j+1}+\tbinom{i+j}{i}.\end{equation}

The opposite-boundary doubly-refined ASM generating function satisfies
\begin{multline}\label{propeq2}
(z_1\!-\!z_2)\,(z_3\!-\!z_4)\,Z^\opp_n(x,y;z_1,z_2)\,Z^\opp_n(x,y;z_3,z_4)\;-\\
(z_1\!-\!z_3)\,(z_2\!-\!z_4)\,Z^\opp_n(x,y;z_1,z_3)\,Z^\opp_n(x,y;z_2,z_4)\;+\\
(z_1\!-\!z_4)\,(z_2\!-\!z_3)\,Z^\opp_n(x,y;z_1,z_4)\,Z^\opp_n(x,y;z_2,z_3)=0,
\end{multline}
and it can be expressed in terms of singly-refined and unrefined ASM generating functions as
\begin{multline}\label{propeq1}
(z_1\!-\!z_2)\,Z^\opp_n(x,y;z_1,z_2)\,Z_{n-1}(x,y)=(z_1\!-\!1)\,z_2\,Z_n(x,y;z_1)\,Z_{n-1}(x,y;z_2)\;-\\
z_1\,(z_2\!-\!1)\,Z_{n-1}(x,y;z_1)\,Z_n(x,y;z_2).\end{multline}

The two previous identities are essentially equivalent,
since~\eqref{propeq1} can be obtained from~\eqref{propeq2} by setting $z_3=1$ and $z_4=0$, and then
applying relations
from~\eqref{1} and~\eqref{0}, while~\eqref{propeq2} can be obtained from~\eqref{propeq1} by
expressing each of the six cases of $(z_i-z_j)Z^\opp_n(x,y;z_i,z_j)$
in terms of singly-refined and unrefined ASM generating functions,
and then checking that the resulting expression for the LHS of~\eqref{propeq2} vanishes.

The formula~\eqref{Zdetdoub} was derived by Behrend, Di Francesco and Zinn-Justin~\cite[Eqs.~(21)--(22)]{BehDifZin13},
as part of the proof of an equality~\cite[Thm.~1]{BehDifZin13} between the opposite-boundary
doubly-refined ASM generating function and a four-parameter generating function for descending plane partitions.
(See also Section~\ref{DPPs}.)
The formula~\eqref{Zdetsing} was derived by Behrend, Di Francesco and Zinn-Justin~\cite[Eqs.~(97)--(98)]{BehDifZin12},
prior to the derivation of~\eqref{Zdetdoub}, as part of the proof of an equality~\cite[Thm.~1]{BehDifZin12}
(conjectured previously by Mills, Robbins and Rumsey~\cite[Conj.~3]{MilRobRum83}) between the singly-refined
ASM generating function and a three-parameter generating function for descending plane partitions.

For an alternative version
of~\eqref{Zdetdoub}, involving a transformation of the matrix~$K_n(x,y;z_1,z_2)$,
see Behrend, Di Francesco and Zinn-Justin~\cite[Eqs.~(65)--(66)]{BehDifZin13},
and for alternative versions
of~\eqref{Zdetsing} or~\eqref{Zdetun}, involving transformations of the matrices~$K_n(x,y;z)$ or~$K_n(x,y)$,
see Behrend, Di Francesco and Zinn-Justin~\cite[Props.~1 \& 4, Eqs.~(65), (66), (87) \& (88), \& Sec.~4.1]{BehDifZin12}.

For additional information on determinants closely related to those of~\eqref{Zdetsing}--\eqref{Zdetun},
including formulae for their evaluation or factorization in certain special cases, see, for example,
Andrews~\cite{And79},
Andrews and Stanton~\cite{AndSta98},
Ciucu, Eisenk{\"o}lbl, Krattenthaler and Zare~\cite[Thms.~10--13]{CiuEisKraZar01},
Ciucu and Krattenthaler~\cite{CiuKra00},
Colomo and Pronko~\cite[Eqs.~(23)--(24)]{ColPro03},~\cite[Eqs.~(4.3)--(4.7)]{ColPro04},
de Gier~\cite[Sec.~2.1]{Deg09},
Gessel and Xin~\cite[Sec.~5]{GesXin06},
Krattenthaler~\cite[e.g., Thms.~25--37]{Kra99},~\cite[Sec.~5.5]{Kra05},
Lalonde~\cite[Thm.~3.1]{Lal02},
Mills, Robbins and Rumsey~\cite{MilRobRum82},~\cite[p.~346]{MilRobRum83},~\cite[Secs.~3--4]{MilRobRum87},
Robbins~\cite[Sec.~2]{Rob00},
and Rosengren~\cite{Ros12}.

The case of~\eqref{propeq1} with $x=y=1$ (as given in~\eqref{Zoppid}) was first obtained by Stroganov~\cite[Sec.~5]{Str06},
and this relation for arbitrary $x$ and $y$ was first obtained,
as an identity involving one- and two-point boundary correlation functions for
the six-vertex model with DWBC, by Colomo and Pronko~\cite[Eq.~(5.32)]{ColPro05b},~\cite[Eq.~(3.32)]{ColPro06}.
An alternative proof of~\eqref{propeq1}
was given by Behrend, Di Francesco and Zinn-Justin~\cite[Sec.~5]{BehDifZin13}.
This identity will also be obtained from Theorem~\ref{thm} of this paper, as Corollary~\ref{cor4},
and it will be found to be a special case of the identity~\eqref{ZmultASM} given in Section~\ref{gensect}.

\subsection{ASM enumeration involving statistics associated with several rows or several columns}\label{gensect}
In this section, a generating function involving statistics associated with several rows (or several columns) of ASMs
is defined, and (in~\eqref{ZmultASM}) a result is stated which,
for the case of particular assignments of certain parameters, provides a determinantal expression for this generating
function in terms of singly-refined and unrefined ASM generating functions.  A derivation of this result will be given in Section~\ref{add1}.

For $A\in\ASM(n)$, define ASM statistics, associated with row $i$ or column~$j$ of $A$, as
\begin{align}\notag\nu^{\row\,i}(A)&=\textstyle\bigl|\bigl\{j\in\{1,\ldots,n\}\,\big|\,\sum_{i'=1}^{i-1}\!A_{i'j}=\sum_{j'=1}^j\!A_{ij'}\bigr\}\bigr|,\\
\notag\nu^{\col\,j}(A)&=\textstyle\bigl|\bigl\{i\in\{1,\ldots,n\}\,\big|\,\sum_{i'=1}^{i-1}\!A_{i'j}=\sum_{j'=1}^j\!A_{ij'}\bigr\}\bigr|,\\
\notag\mu^{\row\,i}(A)&=\text{number of $-1$'s in row $i$ of }A,\\
\label{genstat}\mu^{\col\,j}(A)&=\text{number of $-1$'s in column $j$ of }A.\end{align}

It follows from the defining properties of ASMs that if, for an ASM $A$,
$\sum_{i'=1}^{i-1}\!A_{i'j}=\sum_{j'=1}^j\!A_{ij'}$ (or equivalently $\sum_{i'=1}^i\!A_{i'j}=\sum_{j'=1}^{j-1}\!A_{ij'}$),
then $A_{ij}=0$. Hence,~$\nu^{\row\,i}(A)$ and $\nu^{\col\,j}(A)$ can be regarded as
the numbers of certain 0's in row $i$ and column~$j$, respectively, of~$A$.
It will be seen in Section~\ref{ASMbijsect} that, in terms of the configuration of the six-vertex model with DWBC which
corresponds to $A$,~$\nu^{\row\,i}(A)$ and $\nu^{\col\,j}(A)$ are simply the numbers of vertex configurations
of types~(1) and~(2) in row~$i$ and column~$j$, respectively, of the grid.

It can also be seen that the statistics $\nu^{\row\,i}(A)$ and
$\nu^{\col\,j}(A)$ can be written in various other ways, for example
as
\begin{align}\notag\nu^{\row\,i}(A)&=\textstyle
\sum_{j=1}^n\bigl(\sum_{i'=1}^{i-1}\sum_{j'=1}^jA_{i'j}A_{ij'}+\sum_{i'=i}^n\sum_{j'=j+1}^nA_{i'j}A_{ij'}\bigr),\\
\label{genstat2}\nu^{\col\,j}(A)&=\textstyle
\sum_{i=1}^n\bigl(\sum_{i'=1}^{i-1}\sum_{j'=1}^jA_{i'j}A_{ij'}+\sum_{i'=i}^n\sum_{j'=j+1}^nA_{i'j}A_{ij'}\bigr).\end{align}

It can be checked easily that, for any $A\in\ASM(n)$,
the statistics of~\eqref{numuA}--\eqref{rho} and those of~\eqref{genstat} are related by
\begin{gather}\notag\textstyle\sum_{i=1}^n\nu^{\row\,i}(A)=\sum_{j=1}^n\nu^{\col\,j}(A)=2\nu(A),\\
\notag\textstyle\nu^{\row\,1}(A)=\rT(A),\quad\nu^{\col\,n}(A)=\rR(A),\quad\nu^{\row\,n}(A)=\rB(A),\quad\nu^{\col\,1}(A)=\rL(A),\\
\notag\textstyle\sum_{i=2}^{n-1}\mu^{\row\,i}(A)=\sum_{j=2}^{n-1}\mu^{\col\,j}(A)=\mu(A),\\
\label{multstat}\mu^{\row\,1}(A)=\mu^{\col\,n}(A)=\mu^{\row\,n}(A)=\mu^{\col\,1}(A)=0,
\end{gather}
and that the statistics of~\eqref{genstat} behave under the operations of transposition or anticlockwise quarter-turn rotation
of ASMs according to
\begin{align}\notag\nu^{\row\,i}(A)&=\nu^{\col\,i}(A^T)=n-1-\nu^{\col\,i}(A^Q)-2\mu^{\row\,i}(A),\\
\label{genTQ}\mu^{\row\,i}(A)&=\mu^{\col\,i}(A^T)=\mu^{\col\,i}(A^Q),\end{align}
using the same notation as in~\eqref{TQ}.

For any $0\le m\le n$ and $1\le k_1<\ldots<k_m\le n$, define
an ASM generating function associated with rows
(or, as will be seen in~\eqref{gensymm}, columns) $k_1,\ldots,k_m$  as
\begin{multline}\label{ZrowASM}Z^{k_1,\ldots,k_m}_n(x,y;z_1,\ldots,z_m;w_1,\ldots,w_m)=\\
\sum_{A\in\ASM(n)}x^{\nu(A)}\,y^{\mu(A)-\sum_{i=1}^m\mu^{\row\,k_i}(A)}\,
\prod_{i=1}^m z_i^{\;\nu^{\row\,k_i}(A)}\,w_i^{\;\mu^{\row\,k_i}(A)},\end{multline}
for indeterminates $x$, $y$, $z_1,\ldots,z_m$ and $w_1,\ldots,w_m$.

It can be seen that
\begin{multline}\label{mult1}
Z^{k_1,\ldots,k_m}_n(x,y;z_1,\ldots,z_{i-1},1,z_{i+1},\ldots,z_m;w_1,\ldots,w_{i-1},y,w_{i+1},\ldots,w_m)=\\
Z^{k_1,\ldots,k_{i-1},k_{i+1},\ldots,k_m}_n(x,y;z_1,\ldots,z_{i-1},z_{i+1},\ldots,z_m;w_1,\ldots,w_{i-1},w_{i+1},\ldots,w_m),\end{multline}
and that $Z^{k_1,\ldots,k_m}_n(x,y;z_1,\ldots,z_m;w_1,\ldots,w_m)$ is
the unrefined ASM generating function $Z_n(x,y)$ for $m=0$, the singly-refined ASM generating function
$Z_n(x,y;z_1)$ for $m=1$ and $k_1=1$ or $k_1=n$, and the opposite-boundary doubly-refined ASM generating function
$Z^\opp_n(x,y;z_1,z_2)$ for $m=2$, $k_1=1$ and $k_2=n$.

Using~\eqref{genTQ}, it also follows that
\begin{multline}\label{gensymm}Z^{k_1,\ldots,k_m}_n(x,y;z_1,\ldots,z_m;w_1,\ldots,w_m)=\\
\sum_{A\in\ASM(n)}x^{\nu(A)}\,y^{\mu(A)-\sum_{j=1}^m\mu^{\col\,k_j}(A)}\,
\prod_{j=1}^m z_j^{\;\nu^{\col\,k_j}(A)}\,w_j^{\;\mu^{\col\,k_j}(A)}=\\
x^{n(n-1)/2}\,(z_1\ldots z_m)^{n-1}\,Z^{k_1,\ldots,k_m}_n\bigl(\tfrac{1}{x},\tfrac{y}{x};\,\tfrac{1}{z_1},\ldots,\tfrac{1}{z_m};\,
\tfrac{w_1}{x z_1^{\:2}},\ldots,\tfrac{w_m}{x z_m^{\;2}}\bigr).\end{multline}

Now define, for $0\le m\le n$, a further function
\begin{equation}\label{Zmult}X_n(x,y;z_1,\ldots,z_m)=\begin{cases}
Z_n(x,y),&m=0,\\
\displaystyle\frac{\det_{1\le i,j\le m}
\bigl(z_i^{\;j-1}\,(z_i\!-\!1)^{m-j}\,Z_{n-j+1}(x,y;z_i)\bigr)}{\prod_{1\le i<j\le m}(z_i\!-\!z_j)\:\prod_{i=1}^{m-1}Z_{n-i}(x,y)},&
1\le m\le n.\end{cases}
\end{equation}
This definition is based on a definition of Colomo and Pronko~\cite[Eq.~(6.6)]{ColPro08},~\cite[Eq.~(4.18)]{ColPro12} of a closely-related function
for the six-vertex model with DWBC.

Some properties of $X_n(x,y;z_1,\ldots,z_m)$, which follow straightforwardly from~\eqref{Zmult}
and the latter equations of~\eqref{1},~\eqref{Zsymm} and~\eqref{0}, are
\begin{gather}
\notag X_n(x,y;z_1,\ldots,z_m)\text{ is symmetric in }z_1,\ldots,z_m,\\
\notag X_n(x,y;z_1,\ldots,z_m)=x^{n(n-1)/2}\,(z_1\ldots z_m)^{n-1}\,X_n\bigl(\tfrac{1}{x},\tfrac{y}{x};\tfrac{1}{z_1},\ldots,\tfrac{1}{z_m}\bigr),\\
\notag X_n(x,y;z_1,\ldots,z_{i-1},1,z_{i+1},\ldots,z_m)=X_n(x,y;z_1,\ldots,z_{i-1},z_{i+1},\ldots,z_m),\\
\notag X_n(x,y;z_1,\ldots,z_{i-1},0,z_{i+1},\ldots,z_m)=X_{n-1}(x,y;z_1,\ldots,z_{i-1},z_{i+1},\ldots,z_m),\\
\label{Zmultsymm}X_n(x,y;z)=Z_n(x,y;z).
\end{gather}
Certain further properties can be found by combining~\eqref{Zmult} with
general identities for minors of a matrix.
For example, for $1\le m\le n$, and indeterminates $z_1,\ldots,z_m$, $u_1,\ldots,u_{m-1}$ and $v_1,\ldots,v_{m-1}$, the identity
\begin{multline}\label{Xid}\textstyle\prod_{1\le i<j\le m}(z_i\!-\!z_j)\:\prod_{1\le i\le j\le m-1}(u_i\!-\!v_j)\;\times\\
\textstyle X_n(x,y;z_1,\ldots,z_m)\:\prod_{i=1}^{m-1}X_n(x,y;u_1,\ldots,u_{m-i},v_{m-i},\ldots,v_{m-1})=\\
\det_{1\le i,j\le m}\Bigl(\textstyle\prod_{k=1}^{m-j}(z_i\!-\!u_k)\,\prod_{k=m-j+1}^{m-1}(z_i\!-\!v_k)
\:X_n(x,y;z_i,u_1,\ldots,u_{m-j},v_{m-j+1},\ldots,v_{m-1})\Bigr)\end{multline}
is satisfied.
This result can be obtained from~\eqref{Zmult} by applying a certain identity for minors of a matrix, which will be given in~\eqref{PolBazin},
to the $(3m-2)\times m$ matrix $N$ with entries $N_{ij}=f_j(z_i)$ for $1\le i\le m$, $N_{ij}=f_j(u_{i-m})$ for $m+1\le i\le 2m-1$,
and $N_{ij}=f_j(v_{i-2m+1})$ for $2m\le i\le 3m-2$, where $f_j(z)=z^{j-1}(z-1)^{m-j}Z_{n-j+1}(x,y;z)$ (although the
actual form of the function $f_j(z)$ in this derivation is immaterial).

Note that the $1\le m\le n$ case of~\eqref{Zmult} can be retrieved from~\eqref{Xid}, using only the last three properties in~\eqref{Zmultsymm}
and the $m=0$ case of~\eqref{Zmult}, by setting $u_1=\ldots=u_{m-1}=1$ and $v_1=\ldots=v_{m-1}=0$.

The main result of this section is that, for any $0\le m\le n$ and $1\le k_1<\ldots<k_m\le n$,
\begin{multline}\label{ZmultASM}Z^{k_1,\ldots,k_m}_n(x,y;z_1,\ldots,z_m;xz_1^{\:2}\!+\!(y\!-\!x\!-\!1)z_1\!+\!1,\ldots,xz_m^{\:2}\!+\!
(y\!-\!x\!-\!1)z_m\!+\!1)=\\
X_n(x,y;z_1,\ldots,z_m).\end{multline}
A derivation of~\eqref{ZmultASM}, based on a result of Colomo and Pronko~\cite[Eq.~(6.8)]{ColPro08},~\cite[Eq.~(A.13)]{ColPro10},
will be given in Section~\ref{add1}.  It will also be shown in Section~\ref{add1} that~\eqref{ZmultASM} can be derived by
combining a general identity for minors of a matrix with the
Izergin--Korepin formula for the partition function of the six-vertex model with DWBC.

Note that, using~\eqref{mult1} and the third property of~\eqref{Zmultsymm}, the cases of~\eqref{ZmultASM} with $m<n$
all follow from the case $m=n$.

It can now be seen, using~\eqref{ZrowASM} and~\eqref{ZmultASM}, that the function~\eqref{Zmult}
is a polynomial in $x$, $y$ and $z_1,\ldots,z_m$ with integer coefficients.

It follows from~\eqref{ZmultASM} that, for fixed $n$ and $m$, the LHS of~\eqref{ZmultASM} is independent of $k_1,\ldots,k_m$.
This implies, for example, that the
singly-refined and opposite-boundary doubly-refined ASM generating functions can be written, for any $1\le k\le n$
and $1\le k_1<k_2\le n$, as
\begin{align}\notag Z_n(x,y;z)&=Z_n^k(x,y;z;xz^2\!+\!(y\!-\!x\!-\!1)z\!+\!1),\\
Z^\opp_n(x,y;z_1,z_2)&=Z_n^{k_1,k_2}(x,y;z_1,z_2;xz_1^{\:2}\!+\!(y\!-\!x\!-\!1)z_1\!+\!1,
xz_2^{\:2}\!+\!(y\!-\!x\!-\!1)z_2\!+\!1).\end{align}

It can also be seen that the case $m=2$, $k_1=1$ and $k_2=n$ of~\eqref{ZmultASM} gives the identity~\eqref{propeq1}
 satisfied by the opposite-boundary doubly-refined
ASM generating function, and that the case $m=2$ of~\eqref{Xid}
and the result $Z^\opp_n(x,y;z_1,z_2)=X_n(x,y;z_1,z_2)$ give the identity~\eqref{propeq2}.

For the case $y=x+1$, using the last two formulae of~\eqref{ff2} in~\eqref{Zmult} gives 
$X_n(x,x+1;z_1,\ldots,z_m)=\bigl(\prod_{i=1}^m(xz_i+1)\bigr)^{n-m}\,(x+1)^{(n-m)(n-m-1)/2}\,
\det_{1\le i,j\le m}\bigl(z_i^{\;j-1}((z_i-1)(xz_i+1))^{m-j}\bigr)\big/\prod_{1\le i<j\le m}(z_i-z_j)
=\prod_{1\le i<j\le m}(xz_iz_j+1)\,\bigl(\prod_{i=1}^m(xz_i+1)\bigr)^{n-m}\,(x+1)^{(n-m)(n-m-1)/2}$.
(In the last step, the determinant evaluation $\det_{1\le i,j\le m}\bigl(z_i^{\;j-1}((z_i-1)(xz_i+1))^{m-j}\bigr)=
\prod_{1\le i<j\le m}(z_i-z_j)(xz_iz_j+1)$ can be obtained using the method of identification of factors. See
Krattenthaler~\cite[p.~5 \& Sec.~2.4]{Kra99}, \cite[Sec.~4, Method~3]{Kra05}.  For example, it follows that $xz_iz_j+1$ is a factor of this 
determinant, for each $i\ne j$, by setting $z_j=-1/(xz_i)$, and observing that row $j$ of the matrix is then row $i$ divided by $(-x z_i^2)^{m-1}$.)
This explicit expression for $X_n(x,x+1;z_1,\ldots,z_m)$ can be seen to generalize the last three formulae of~\eqref{ff2}.

Finally, it will be shown in Section~\ref{add2} that the function~\eqref{Zmult} at $x=y=1$ can be expressed as
\begin{multline}\label{schurgen}X_n(1,1;z_1,\ldots,z_m)=3^{-n(n-1)/2}\,(-q)^{m(n-1)}\,\bigl((z_1\!+\!q)\ldots(z_m\!+\!q)\bigr)^{n-1}\times\\
s_{(n-1,n-1,\ldots,2,2,1,1)}\bigl(\tfrac{qz_1+1}{z_1+q},\ldots,\tfrac{qz_m+1}{z_m+q},
\,\underbrace{1,\ldots,1}_{2n-m}\,\bigr)\big|_{q=e^{\pm 2\pi i/3}},\end{multline}
where this uses the same notation as, and can be seen to generalize, the identity~\eqref{schur}.

\subsection{ASMs with several rows or columns closest to two opposite boundaries prescribed}
In this section, the enumeration of ASMs with prescribed configurations of
several rows or columns closest to two opposite boundaries (but not involving any further statistics) is discussed.

Let $K_1$ and $K_2$ be subsets of $\{1,\ldots,n\}$ with $|K_1|+|K_2|\le n$,
and let $A_{K_1}$ and~$A_{K_2}$ be matrices with $n$ columns and~$|K_1|$ or $|K_2|$ rows respectively,
in which each entry is~$0$,~$1$ or~$-1$, along each row and column the nonzero entries alternate in sign, in each row the entries sum to~1,
in column $j$ of $A_{K_1}$ the entries sum to~$1$ if $j\in K_1$ and the entries
sum to~$0$ with the first nonzero entry (if there is one) being~$1$ if $j\not\in K_1$,
and in column~$j$ of~$A_{K_2}$ the entries sum to~$1$ if $j\in K_2$ and the entries
sum to~$0$ with the first nonzero entry (if there is one) being~$-1$ if $j\not\in K_2$.
(For example, the entries of these matrices could be taken as simply
$(A_{K_1})_{ij}=\delta_{j,K_{1,i}}$ and $(A_{K_2})_{ij}=\delta_{j,K_{2,i}}$,
where~$K_{1,i}$ and~$K_{2,i}$ are the~$i$th smallest elements of~$K_1$ and~$K_2$, respectively.)
For a fixed choice of matrices $A_{K_1}$ and~$A_{K_2}$ which satisfy these conditions,
define
\begin{multline}\label{Amult}\A_{n,K_1,K_2}=\text{number of $n\times n$ ASMs whose first~$|K_1|$ rows are given by~$A_{K_1}$,}\\
\text{and last~$|K_2|$ rows are given by $A_{K_2}$,}\end{multline}
where it can be checked easily that~$\A_{n,K_1,K_2}$ is independent of the choice of~$A_{K_1}$ and~$A_{K_2}$.
(Note that if $K_1=\emptyset$ or $K_2=\emptyset$, then there is no restriction on the first or last rows, respectively, of the ASMs in~\eqref{Amult}.)
Thus, $\A_{n,K_1,K_2}$ is the number of $n\times n$ ASMs in which certain rows or columns closest to two opposite boundaries are prescribed.

Alternatively, $\A_{n,K_1,K_2}$ can be written, without reference to~$A_{K_1}$ and~$A_{K_2}$, as
the number of $(n-|K_1|-|K_2|)\times n$ matrices in which each entry is~$0$, $1$ or~$-1$,
along each row and column the nonzero entries alternate in sign, in each row the entries sum to~1,
and in column $j$ the entries sum to~$1$ if $j\in\{1,\ldots,n\}\setminus(K_1\cup K_2)$,
the entries sum to~$-1$ if $j\in K_1\cap K_2$,
the entries sum to~$0$ with the first nonzero entry (if there is one) being~$-1$ if $j\in K_1\setminus K_2$,
and the entries sum to~$0$ with the first nonzero entry (if there is one) being~$1$ if $j\in K_2\setminus K_1$.

It can be seen that
\begin{equation}\A_{n,K_1,K_2}=\A_{n,K_2,K_1}=\A_{n,\{n+1-k\mid k\in K_1\},\{n+1-k\mid k\in K_2\}}\end{equation}
and that
\begin{equation}\A_{n,\emptyset,\emptyset}=\A_n,\quad\A_{n,\{k+1\},\emptyset}=
\A_{n,k},\quad\A_{n,\{k_1+1\},\{n-k_2\}}=\A^\opp_{n,k_1,k_2},\end{equation}
for any $0\le k,k_1,k_2\le n-1$.

It can also be seen that $\A_{n,\{1,\ldots,n\}\setminus {K_1},\emptyset}$ or
$\A_{n,\{1,\ldots,n\}\setminus {K_2},\emptyset}$ give the numbers of possible matrices $A_{K_1}$ or
$A_{K_2}$, respectively, which satisfy the conditions outlined above.  Furthermore,
for a fixed set $K$ of positive integers, $\A_{n,\{1,\ldots,n\}\setminus K,\emptyset}$ is independent
of $n$, for $n\ge\max(K)$, with an operator formula for this number having been obtained by Fischer~\cite[Thm.~1]{Fis06}.
Some related functions for the six-vertex model with DWBC have been studied by Colomo and Pronko~\cite{ColPro12}.

The numbers $\A_{n,K_1,K_2}$ have been studied by Fischer~\cite{Fis11,Fis12}, Fischer and Romik~\cite{FisRom09}, and
Karklinsky and Romik~\cite{KarRom10}.
Results which have been obtained include
linear relations between the numbers $\A_{n,K,\emptyset}$ with $|K|=2$ and the opposite-boundary doubly-refined ASM numbers
(Fischer~\cite[Eq.~(5.3)]{Fis11}, and Karklinsky and Romik~\cite[Eq.~(7)]{KarRom10}),
explicit formulae for $\A_{n,K,\emptyset}$ with $|K|=2$
(Fischer~\cite[Eq.~(1.3)]{Fis11}, and Karklinsky and Romik~\cite[Thm.~1]{KarRom10}),
and an expression for~$\A_{n,K,\{k\}}$ with $|K|=2$ in terms of numbers of $n\times n$ ASMs with prescribed configurations on three boundaries
(Fischer~\cite[Thm.~1]{Fis12}).  Also, related numbers have been defined (Fischer~\cite[Eq.~(1.2)]{Fis11},~\cite[Eq.~(2.9)]{Fis12}),
and have been shown to give the numbers $\A_{n,K_1,K_2}$ in certain cases (Fischer~\cite[Thm.~1]{Fis11}),
and to satisfy certain linear relations (Fischer~\cite[Eq.~(5.1) \& p.~253, first equation]{Fis11},~\cite[Eqs.~(1.5) \& (1.6)]{Fis12}).

\subsection{The cases $Z_n(1,3)$ and $Z_n(1,3;z)$}
Results involving $Z_n(1,3)$ and $Z_n(1,3;z)$,
some of which were previously conjectured by Mills, Robbins and Rumsey~\cite[Conj.~6--7]{MilRobRum83},~\cite[Conj.~5]{MilRobRum86},
and which correspond to the so-called $3$-enumeration of ASMs,
have been obtained by Cantini~\cite[Sec.~4.1]{Can07}, Colomo and
Pronko~\cite{ColPro05c},~\cite[Secs.~4.4 \&~5.4]{ColPro05a},~\cite[Sec.~4.3]{ColPro06},
Kuperberg~\cite[Thm.~2]{Kup96},~\cite[Thm.~3]{Kup02},
Okada~\cite[Thm.~2.4(1), fourth eq.]{Oka06}, and
Stroganov~\cite{Str03}.

\subsection{The case $Z_n(1,1;-1)$}
It was observed by Di Francesco~\cite[Eqs.~(2.7)--(2.8)]{Dif06},
and shown explicitly by Williams~\cite[Thm.~4]{Wil08}, that
\begin{equation}Z_n(1,1;-1)=\mathcal{V}_n^2,\end{equation}
where $\mathcal{V}_n$ is the number of
vertically-symmetric $n\times n$ ASMs (i.e., ASMs $A$ with $A_{ij}=A_{i,n+1-j}$).  Note that for $n$ even,
it can be seen easily (for example, using the second equation of~\eqref{A1}) that both sides of the equation are zero.
For $n$ odd, $\mathcal{V}_n=\prod_{i=1}^{(n-1)/2}(6i-2)!/(n+2i-1)!$,
as conjectured by Robbins~\cite{Rob91,Rob00}, and first shown by Kuperberg~\cite[Thm.~2, second equation]{Kup02}.
It is known that $\mathcal{V}_n$ is also
the number of descending plane partitions invariant under a certain operation
(see Behrend, Di Francesco and Zinn-Justin~\cite[Sec.~4.2 \&~Eq.~(102)]{BehDifZin12}),
and the number of totally symmetric self-complementary plane partitions invariant under a certain operation
(see Ishikawa~\cite[Thm.~7.11(i)]{Ish06b}), and it has been conjectured by
Di Francesco~\cite[Eq.~(2.8)]{Dif06},~\cite[Eq.~(4.5)]{Dif07} that $\mathcal{V}_n^{\;2}$ (up to sign) and $\mathcal{V}_n^{\;4}$ are obtained when a
parameter is set to~$-1$ in certain
generating functions for totally symmetric self-complementary and cyclically symmetric transpose-complementary plane partitions.

\subsection{ASMs with a fixed number of generalized inversions}
Expressions for the coefficients of~$x^p$ in $Z_n(x,1)$, i.e.,
for the number of $n\times n$ ASMs with $p$ generalized inversions,
can be obtained using a result of Behrend~\cite[Cor.~14]{Beh08}, and are given
by Behrend, Di Francesco and Zinn-Justin~\cite[Eqs.~(24)--(25)]{BehDifZin12}. Some particular cases of $p$,
for ASMs with fixed values of certain further statistics,
are also considered by Behrend, Di Francesco and Zinn-Justin~\cite[pp.~337--338]{BehDifZin12},~\cite[Sec.~7.1]{BehDifZin13}.

\subsection{ASMs with a fixed number of $-1$'s}
For the case of $Z_n(1,y)$, an expression for the
coefficient of $y^m$, i.e., for the number of $n\times n$ ASMs with~$m$ $-1$s,
has been obtained by Cori, Duchon and Le Gac~\cite{CorDucLeg10,Leg11a},~\cite[Ch.~3]{Leg11b} (and, for $m=1,2$, by Aval~\cite[Prop.~4]{Ava08}),
a certain factorization property (previously conjectured by
Mills, Robbins and Rumsey~\cite[Conj.~4 \& Conj.~5]{MilRobRum83},~\cite[pp.~50 \&~54]{MilRobRum87} and Robbins~\cite[Sec.~2]{Rob00})
has been established by Kuperberg~\cite[Thm.~3]{Kup96},~\cite[Thm.~4, first two eqs.]{Kup02},
and a certain operator formula containing a parameter which corresponds to the number of $-1$'s in
an ASM has been obtained by Fischer~\cite[Thm.~1]{Fis10}.

Results involving the coefficient of $y^1$ in $Z_n(x,y;z)$,
i.e., for a certain generating function for $n\times n$ ASMs with a single $-1$, have
been obtained by Lalonde~\cite{Lal02,Lal06}.

\subsection{Objects in simple bijection with ASMs}\label{bij}
Simple bijections are known between ASMs and various other combinatorial objects.  Some examples,
all of which are reviewed by Propp~\cite[Secs.~2--4 \&~7]{Pro01}, are
bijections between $\ASM(n)$ and sets of monotone (or Gog) triangles with bottom row $1,\ldots,n$,
$(n+1)\times(n+1)$ corner-sum matrices, $(n+1)\times(n+1)$ height-function matrices, 3-colourings of an
$(n+1)\times(n+1)$ grid with certain boundary conditions,
configurations of the six-vertex model on an $n\times n$ grid with DWBC, or
fully packed loop configurations on an $n\times n$ grid with certain boundary conditions.
Some further examples are a bijection between $\ASM(n)$ and the set of sets of $n$ osculating paths on an $n\times n$ grid in which
all paths start and end on two adjacent boundaries
(see, for example, Behrend~\cite[Secs.~2--4]{Beh08}, Bousquet-M\'elou and Habsieger~\cite[pp.~68--69]{BouHab93}, Bressoud~\cite[pp.~226--227]{Bre99},
or E\u{g}ecio\u{g}lu, Redmond and Ryavec~\cite[pp.~35--36]{EgeRedRya01}),
and a bijection between $\ASM(n)$ and the set of alternating paths
for any fixed fully packed loop configuration on an $n\times n$ grid
(see Ng~\cite[Prop.~3.1]{Ng12}).

In each of these cases, the statistics for the other combinatorial object which correspond,
under the bijection, to the ASM statistics~\eqref{numuA}--\eqref{rho} or~\eqref{genstat}
can be obtained relatively straightforwardly. For example, this will be done for configurations of the
six-vertex model with DWBC in~\eqref{6VDWBCstat}.  Accordingly, all of the results
reviewed or obtained in this paper for~$\ASM(n)$ with the statistics~\eqref{numuA}--\eqref{rho} or~\eqref{genstat}
could alternatively be expressed in
terms of any of these other combinatorial objects and their associated statistics.
However, it should be noted that certain statistics will seem more natural when expressed in terms of some objects than others.

It should also be noted that, by applying some of these simple bijections between ASMs and other objects, further statistics or characteristics,
which would not seem natural in terms of the ASMs themselves, may become apparent.
For instance, a natural statistic for the monotone triangle which corresponds to an ASM is the number of diagonals whose entries all have certain minimal values
(see, for example, Zeilberger~\cite[Lem.~1]{Zei96a} for a result, previously conjectured by Mills, Robbins and Rumsey~\cite[Conj.~7]{MilRobRum86},
involving this statistic),
natural statistics for the 3-colouring of a grid which corresponds to an ASM are the numbers of appearances of each colour (see, for example,
Rosengren~\cite[Cor.~8.4]{Ros09},~\cite[Sec.~3]{Ros11} for results involving these statistics),
and a natural characteristic for the fully packed loop configuration which corresponds to an ASM is its associated link pattern
(see, for example, Cantini and Sportiello~\cite[Eq.~(23)]{CanSpo11} for a result, previously conjectured by Razumov
and Stroganov~\cite{RazStr04a}, involving this characteristic).

In the case of the six-vertex model with DWBC, ASM generating functions are
closely related to certain partition functions, expectation values, probabilities
or correlation functions for the model, and the results of this paper could alternatively
be expressed in terms of the latter quantities.  For example, the quadruply-refined ASM generating function is
closely associated with a certain four-point boundary correlation function for the six-vertex model with DWBC.
For studies of various correlation functions, and related quantities,
for the six-vertex model with DWBC, see, for example,
Bogoliubov, Kitaev and Zvonarev~\cite{BogKitZvo02},
Bogoliubov, Pronko and Zvonarev~\cite{BogProZvo02},
Colomo and Pronko~\cite{ColPro05b,ColPro06,ColPro08,ColPro12},
Foda and Preston~\cite{FodPre04}, and Motegi~\cite{Mot11,Mot12}.

\subsection{Descending plane partitions}\label{DPPs}
Descending plane partitions are certain combinatorial objects first defined by Andrews~\cite{And79,And80}.
It was shown by Behrend, Di Francesco and Zinn-Justin~\cite[Thm.~1]{BehDifZin13}
that the opposite-boundary doubly-refined ASM generating function
$Z^\opp_n(x,y;z_1,z_2)$ is equal to the generating function for descending plane partitions with largest part at most~$n$,
in which the statistics associated with $x$, $y$,~$z_1$ and~$z_2$ are, respectively,
the number of nonspecial parts, the number of special parts (as first defined by Mills, Robbins
and Rumsey~\cite[p.~344]{MilRobRum83}), the number of~$n$'s,
and the number of $(n-1)$'s plus the number of rows of length~$n-1$, in a descending plane partition.
(Note, however, that no general bijection is currently known between $\ASM(n)$ and the set of descending plane partitions with
largest part at most~$n$, and
that no further statistics for descending plane partitions are currently known which, together with the previous four statistics, lead to
generating functions which are equal to the adjacent-boundary doubly-refined, triply-refined or quadruply-refined ASM generating functions.)

It follows that certain known results for descending plane partitions
correspond to results involving the opposite-boundary doubly-refined, singly-refined
or unrefined ASM generating functions.  For example, certain such results were outlined
in Section~\ref{prev1}.

Results are also known for the enumeration of descending plane partitions with a prescribed sum of parts.
In particular, it follows from work of Mills, Robbins and Rumsey~\cite[Sec.~5]{MilRobRum82} that, for $0\le k\le n-1$,
\begin{align}\notag\textstyle\sum_{D\in\mathrm{DPP}(n)}q^{|D|}&=\textstyle\prod_{i=0}^{n-1}\frac{[3i+1]_q!}{[n+i]_q!},\\
\label{DPP}\textstyle\sum_{D\in\mathrm{DPP}(n,k)}q^{|D|}&=\textstyle
\frac{q^{kn}\:[n+k-1]_q!\:[2n-k-2]_q!}{[k]_q!\:[n-k-1]_q!\:[2n-2]_q!}\:\prod_{i=0}^{n-2}\!\frac{[3i+1]_q!}{[n+i-1]_q!},\end{align}
where $\mathrm{DPP}(n)$ denotes the set of descending plane partitions with largest part at most~$n$,
$\mathrm{DPP}(n,k)$ denotes the set of elements of $\mathrm{DPP}(n)$ with exactly $k$ parts equal to $n$,
$|D|$ denotes the sum of parts of a descending plane partition $D$, and $[n]_q!$ is the usual $q$-factorial of $n$.
As expected, each RHS of~\eqref{DPP} reduces to each RHS of~\eqref{ASM}--\eqref{RASM} for $q=1$.  However, no ASM statistic
is currently known which corresponds to the sum of parts of a descending plane
partition, and leads to generating functions matching those of~\eqref{DPP}.

For further information, and related results or conjectures, regarding descending plane partitions, see, for
example,
Andrews~\cite{And79,And80},
Ayyer~\cite{Ayy10},
Behrend, Di Francesco and Zinn-Justin~\cite{BehDifZin12,BehDifZin13},
Bressoud~\cite{Bre99},
Bressoud and Propp~\cite{BrePro99},
Krattenthaler~\cite{Kra06},
Lalonde~\cite{Lal02,Lal03,Lal06},
Mills, Robbins and Rumsey~\cite{MilRobRum82,MilRobRum83,MilRobRum87},
Robbins~\cite{Rob91,Rob00},
and Striker~\cite{Str11a}.

\subsection{Totally symmetric self-complementary plane partitions}\label{TSSCPP}
Totally symmetric self-complementary plane partitions are certain combinatorial objects first defined
by Mills, Robbins and Rumsey~\cite[Sec.~1]{MilRobRum86}, and Stanley~\cite[Sec.~2]{Sta86a}.
It was shown by Fonseca and Zinn-Justin~\cite{FonZin08},
following conjectures of Mills, Robbins and Rumsey~\cite[Conj.~2 \&~3]{MilRobRum86},
that the opposite-boundary doubly-refined ASM generating function
with both bulk parameters set to~1, i.e., $Z^\opp_n(1,1;z_1,z_2)$, is equal to a generating function for
totally symmetric self-complementary plane partitions in a $2n\times2n\times2n$ box
in which the statistics associated with~$z_1$ and~$z_2$ can be certain pairs from among several
statistics defined by Doran~\cite[Sec.~7]{Dor93}, and Mills, Robbins and Rumsey~\cite[Sec.~3]{MilRobRum86} (see also Robbins~\cite[p.~16]{Rob91}).
The singly-refined case of this result was obtained previously by Razumov, Stroganov and
Zinn-Justin~\cite[Sec.~5.5]{RazStrZin07}.
(Note, however, that no general bijection is currently known between $\ASM(n)$ and the set of totally symmetric self-complementary plane partitions
in a $2n\times2n\times2n$ box, and that no statistics for totally symmetric self-complementary plane partitions are currently known which
have the same enumerative behaviour as the number of generalized inversions or the number of $-1$'s in an ASM.)

It follows that certain known results for totally symmetric self-complementary plane partitions,
or obtained while studying these objects,
correspond to results involving the ASM generating functions $Z^\opp_n(1,1;z_1,z_2)$ or
$Z_n(1,1;z)$, or the ASM numbers $\A^\opp_{n,k_1,k_2}$, $\A_{n,k}$ or~$\A_n$.
For example, for $Z^\opp_n(1,1;z_1,z_2)$, $Z_n(1,1;z)$ or $\A_n$,
Pfaffian expressions follow from
results of Ishikawa~\cite[Thms.~1.2 \&~1.4, \& Sec.~7]{Ish06a} and Stembridge~\cite[Thm.~8.3]{Ste90},
constant-term expressions follow from
results of Ishikawa~\cite[Sec.~8]{Ish06a}, Krattenthaler~\cite[Thm.]{Kra96}\ and Zeilberger~\cite{Zei94},~\cite[Sublems.~1.1 \&~1.2]{Zei96a},
and integral expressions (which can easily be converted to constant-term expressions)
follow from results of Fonseca and Zinn-Justin~\cite[Eqs.~(4.9) \&~(4.14)]{FonZin08} and 
Zinn-Justin and Di Francesco~\cite[Eqs.~(37) \&~(39)]{ZinDif08}.
Note that many of these results are expressed in terms of certain triangles of positive integers (specifically, monotone or Gog triangles for ASMs,
and Magog triangles for totally symmetric self-complementary plane partitions), or closely related integer arrays.
Also, many such results are stated in more general forms
which contain additional parameters associated
with certain entries of such arrays being prescribed to take certain values, or being bounded by certain values.

For the case of adjacent-boundary doubly-refined ASM enumeration, it has been conjectured by Cheballah~\cite[Conj.~4.3.1]{Che11}
that $\A^\adj_{n,k_1,k_2}$ equals the number of totally symmetric self-complementary plane partitions
in a $2n\times2n\times2n$ box for which a certain pair of statistics have values $k_1$ and $k_2$.

For further information, and related results or conjectures, regarding totally symmetric
self-complementary plane partitions, see, for example,
Andrews~\cite{And94},
Ayyer, Cori and Gouyou-Beauchamps~\cite{AyyCorGou11},
Bressoud~\cite[Sec.~6.2]{Bre99},
Bressoud and Propp~\cite{BrePro99},
Biane and Cheballah~\cite{BiaChe13},
Cheballah~\cite{Che11},
Cheballah and Biane~\cite{CheBia12},
Di Francesco~\cite{Dif06,Dif07},
Doran~\cite{Dor93},
Fonseca~\cite[Secs.~3--4]{Fon10},
Fonseca and Zinn-Justin~\cite{FonZin08},
Ishikawa~\cite{Ish06a,Ish06b},
Krattenthaler~\cite{Kra96},
Mills, Robbins and Rumsey~\cite{MilRobRum86},
Robbins~\cite{Rob91},
Stanley~\cite{Sta86a},
Stembridge~\cite[Sec.~8]{Ste90},
Striker~\cite{Str09,Str11b},
Zeilberger~\cite{Zei94,Zei96a},
Zinn-Justin~\cite{Zin09},
and Zinn-Justin and Di Francesco~\cite{ZinDif08}.

\subsection{Loop models}\label{loops}
Numbers of certain ASMs, or plane partitions, have been found to appear also as
appropriately normalized entries of particular eigenvectors (or sums of such entries, or norms of such eigenvectors)
associated with certain
cases of integrable loop models (or associated quantum spin chains).  A wide variety of such cases are known,
and these often involve ASMs
with prescribed link patterns of associated fully packed loop configurations,
with prescribed values of some of the statistics of~\eqref{numuA}-\eqref{rho} or of related statistics or parameters,
or subject to invariance under certain symmetry operations.

Of the many papers which study the confirmed or conjectured
appearances of numbers of ASMs in such contexts, a few examples are
Batchelor, de Gier and Nienhuis~\cite{BatDegNie01},
Cantini and Sportiello~\cite{CanSpo11,CanSpo12},
Di Francesco~\cite{Dif04b,Dif04a},
Di Francesco and Zinn-Justin~\cite{DifZin05},
Pasquier~\cite{Pas06},
Razumov and Stroganov~\cite{RazStr01,RazStr04a,RazStr06c}, and
Stroganov~\cite{Str01}. For reviews of some of these matters, see, for example,
de Gier~\cite{Deg05,Deg07,Deg09}, or Zinn-Justin~\cite{Zin09}.

\subsection{ASMs invariant under symmetry operations}\label{invsymm}
This paper is focused on the enumeration of ASMs with prescribed values of the bulk statistics~\eqref{numuA}
and boundary statistics~\eqref{rho}, but with no other conditions applied.
However, other studies have focused on the enumeration of ASMs which are invariant under certain symmetry operations,
or subject to related conditions,
and which in some cases also have prescribed values of some of the statistics of~\eqref{numuA}-\eqref{rho}.
For results of this type (some of which
remain conjectural), see, for example, Aval and Duchon~\cite{AvaDuc09,AvaDuc10},
Bousquet-M\'elou and Habsieger~\cite{BouHab93},
Bressoud~\cite[Sec.~6.1]{Bre99},
Kuperberg~\cite{Kup02}, Okada~\cite{Oka06},
Robbins~\cite{Rob91,Rob00}, Razumov and Stroganov~\cite{RazStr04b,RazStr06b,RazStr06a},
Stanley~\cite{Sta86b},
and Stroganov~\cite{Str04,Str08}.

Numbers of certain such ASMs have also been related (again, in some cases, only conjecturally)
to numbers of certain descending plane partitions
(see, for example, de Gier, Pyatov and Zinn-Justin~\cite[Prop.~3, first equation]{DegPyaZin09}, and
Mills, Robbins and Rumsey~\cite[Conj.~3S]{MilRobRum83}),
numbers of certain totally-symmetric self-complementary plane partitions
(see, for example, Ishikawa~\cite{Ish06a,Ish06b}, and Mills, Robbins and Rumsey~\cite[Conjs.~4 \& 6]{MilRobRum86}), or
appropriately normalized entries of eigenvectors associated with certain cases of loop models
(see, for example, the references given in Section~\ref{loops}).

\section{Main results}\label{newres}
In this section, the main result of this paper (Theorem~\ref{thm}) is stated,
and some of its consequences (Corollaries~\ref{cor1}--\ref{cor10}) are derived and discussed.
The proof of Theorem~\ref{thm} is deferred to Section~\ref{pf}.

In particular, new results are given for the quadruply-refined ASM generating function (Theorem~\ref{thm} and Corollary~\ref{cor9}),
triply-refined ASM generating function (Corollaries~\ref{cor1},~\ref{cor3} and~\ref{cor10}),
adjacent-boundary doubly-refined ASM generating functions (Corollaries~\ref{cor2},~\ref{cor5} and~\ref{cor6}),
singly-refined ASM generating function (Corollary~\ref{cor7}), and unrefined ASM generating function (Corollary~\ref{cor8}),
and a previously-known result is obtained for the opposite-boundary doubly-refined ASM generating function (Corollary~\ref{cor4}).

\subsection{Main theorem}\label{new1}
The primary result of this paper is as follows.
\begin{theorem}\label{thm}
The quadruply-refined ASM generating function satisfies
\begin{multline}\label{quadrel}y(z_4\!-\!z_2)(z_1\!-\!z_3)\,Z^\qua_n(x,y;z_1,z_2,z_3,z_4)\,Z_{n-2}(x,y)=\\
\bigl((z_1\!-\!1)(z_2\!-\!1)\!-\!yz_1z_2\bigr)\bigl((z_3\!-\!1)(z_4\!-\!1)\!-\!yz_3z_4\bigr)\,Z^\adj_{n-1}(x,y;z_4,z_1)\,Z^\adj_{n-1}(x,y;z_2,z_3)\;-\\
\bigl(x(z_4\!-\!1)(z_1\!-\!1)\!-\!y\bigr)\bigl(x(z_2\!-\!1)(z_3\!-\!1)\!-\!y\bigr)\,z_1z_2z_3z_4\,
\Z^\adj_{n-1}(x,y;z_1,z_2)\,\Z^\adj_{n-1}(x,y;z_3,z_4)\;-\\
(z_2\!-\!1)(z_3\!-\!1)\bigl((z_4\!-\!1)(z_1\!-\!1)\!-\!yz_4z_1\bigr)\,Z^\adj_{n-1}(x,y;z_4,z_1)\,Z_{n-2}(x,y)\;+\\
(z_3\!-\!1)(z_4\!-\!1)\bigl(x(z_1\!-\!1)(z_2\!-\!1)\!-\!y\bigr)\,z_1z_2\,(xz_3z_4)^{n-1}\,\Z^\adj_{n-1}(x,y;z_1,z_2)\,Z_{n-2}(x,y)\;-\\
(z_4\!-\!1)(z_1\!-\!1)\bigl((z_2\!-\!1)(z_3\!-\!1)\!-\!yz_2z_3\bigr)\,Z^\adj_{n-1}(x,y;z_2,z_3)\,Z_{n-2}(x,y)\;+\\
(z_1\!-\!1)(z_2\!-\!1)\bigl(x(z_3\!-\!1)(z_4\!-\!1)\!-\!y\bigr)\,z_3z_4\,(xz_1z_2)^{n-1}\,\Z^\adj_{n-1}(x,y;z_3,z_4)\,Z_{n-2}(x,y)\;+\\
(z_1\!-\!1)(z_2\!-\!1)(z_3\!-\!1)(z_4\!-\!1)\bigl(1-(x^2z_1z_2z_3z_4)^{n-1}\bigr)\,Z_{n-2}(x,y)^2.\end{multline}\end{theorem}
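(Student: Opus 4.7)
The plan is to follow the strategy outlined in the introduction, combining the Izergin--Korepin formula for the partition function of the six-vertex model with DWBC with the Desnanot--Jacobi determinant identity. First I would fix the bijection between $\ASM(n)$ and $\SVDWBC$ configurations on an $n\times n$ grid, and determine how each of the six statistics $\nu,\mu,\rT,\rR,\rB,\rL$ translates into vertex counts and boundary arrow positions. Under the standard Boltzmann weights depending on row spectral parameters $u_1,\ldots,u_n$, column spectral parameters $v_1,\ldots,v_n$, and a global crossing parameter that ultimately carries $x$ and $y$, I would identify a specialization in which the partition function $Z^{\SVDWBC}_n$ equals, up to an explicit monomial factor, the quadruply-refined ASM generating function $Z^\qua_n(x,y;z_1,z_2,z_3,z_4)$: the inner parameters are set to a common value, $u_2=\cdots=u_{n-1}$ and $v_2=\cdots=v_{n-1}$, while the four free parameters $u_1,u_n,v_1,v_n$ encode the four boundary weights $z_1,z_2,z_3,z_4$, paired with the rows/columns that support the corresponding boundary statistics.

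Next I would write $Z^{\SVDWBC}_n = P\cdot\det M$ via the Izergin--Korepin formula, with $M$ an $n\times n$ matrix whose $(i,j)$ entry depends only on $u_i$ and $v_j$, and apply the Desnanot--Jacobi identity
\begin{equation*}
\det M\cdot\det M_\mathrm{C}=\det M_\mathrm{TL}\cdot\det M_\mathrm{BR}-\det M_\mathrm{TR}\cdot\det M_\mathrm{BL},
\end{equation*}
relating $M$ to its central $(n-2)\times(n-2)$ submatrix $M_\mathrm{C}$ and its four corner $(n-1)\times(n-1)$ submatrices. Each of the six determinants appearing here is the determinantal part of a partition function of the six-vertex model with DWBC on a smaller grid, with spectral parameters equal to the surviving rows and columns. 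After multiplying each determinant by the appropriate ratio of Izergin--Korepin prefactors, each becomes a bona fide partition function, and upon the specialization $u_2=\cdots=u_{n-1}$, $v_2=\cdots=v_{n-1}$, these partition functions collapse to $Z^\qua_n(x,y;z_1,z_2,z_3,z_4)$, $Z_{n-2}(x,y)$, and four $(n-1)\times(n-1)$ adjacent-boundary doubly-refined generating functions. Two of the latter appear in their standard form $Z^\adj_{n-1}$ and two as the alternative $\Z^\adj_{n-1}$ of~\eqref{Zaltdef}, the difference arising because the positions $\rR$ or $\rL$ along a column are measured from the opposite end of that column to where the corresponding spectral parameter naturally sits.

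The prefactor ratios that multiply the six determinants will supply the polynomials in $z_1,z_2,z_3,z_4$ that multiply the six generating functions on the RHS of~\eqref{quadrel}. The quadratic polynomials of the forms $(z_i-1)(z_j-1)-yz_iz_j$ and $x(z_i-1)(z_j-1)-y$ occur because each corner entry of an ASM is associated with two of the four free spectral parameters (e.g.\ the top-left corner with $u_1$ and $v_1$), so the prefactor of each corner minor misses the contribution from one such pair, leaving a quadratic remainder once specialized. The factor $y(z_4-z_2)(z_1-z_3)$ on the LHS arises from the cross-ratio piece of the full prefactor that is sensitive to all four free parameters. The final term of~\eqref{quadrel}, involving $(z_1-1)(z_2-1)(z_3-1)(z_4-1)\bigl(1-(x^2z_1z_2z_3z_4)^{n-1}\bigr)Z_{n-2}(x,y)^2$, emerges from the pieces of the prefactor ratios that involve both diagonal pairs of free parameters simultaneously; these separate into two contributions, one associated with $\det M_\mathrm{TL}\,\det M_\mathrm{BR}$ and one with $\det M_\mathrm{TR}\,\det M_\mathrm{BL}$, whose combination produces the $1-(x^2z_1z_2z_3z_4)^{n-1}$ factor.

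The main obstacle I anticipate is precisely this prefactor bookkeeping: six separate Izergin--Korepin prefactors must be tracked consistently, each corner determinant normalized correctly to the corresponding ASM generating function (with careful attention to the $Z^\adj_{n-1}$ versus $\Z^\adj_{n-1}$ distinction and to which pair of the $z_i$ sits at which corner), and the universal monomial produced by the specialization must be checked to cancel and leave the identity in the form~\eqref{quadrel}. Once the algebra is set up, however, the resulting relation is polynomial in $z_1,z_2,z_3,z_4$ with coefficients in the ring of ASM generating functions, so its validity reduces to a coefficient-by-coefficient check.
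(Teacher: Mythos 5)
Your overall strategy is exactly the one the paper uses: map ASMs to six-vertex configurations with DWBC, specialize the spectral parameters to $u_2=\cdots=u_{n-1}=r$, $v_2=\cdots=v_{n-1}=s$ with $u_1,u_n,v_1,v_n$ free, write the partition function via the Izergin--Korepin formula, apply the Desnanot--Jacobi identity to the matrix, and then translate the six resulting partition functions back into ASM generating functions. However, there is a genuine gap in the step where you claim that under this specialization the partition function ``equals, up to an explicit monomial factor, the quadruply-refined ASM generating function.'' This is false, and it is the central technical obstruction of the whole argument. The Boltzmann weight at each of the four corner vertices $(1,1)$, $(1,n)$, $(n,n)$, $(n,1)$ depends on a \emph{pair} of the free parameters (e.g.\ on $u_1$ and $v_1$ at the top-left corner), so the weight of a configuration is not a product of single-parameter boundary factors times a bulk factor: it carries four additional corner-dependent factors whose exponents are determined by whether the corresponding corner entry of the ASM is a $1$ or a $0$. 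Consequently the specialized partition function is proportional not to $Z^\qua_n$ but to a ten-parameter generalization of it, namely $\sum_A x^{\nu(A)}y^{\mu(A)}z_1^{\rT(A)}z_2^{\rR(A)}z_3^{\rB(A)}z_4^{\rL(A)}z_{41}^{1-A_{11}}z_{12}^{1-A_{1n}}z_{23}^{1-A_{nn}}z_{34}^{1-A_{n1}}$, with the corner parameters $z_{41},\dots,z_{34}$ equal to specific rational expressions such as $1-\tfrac{(z_4-1)(z_1-1)}{yz_4z_1}$ and $1-\tfrac{x(z_1-1)(z_2-1)}{y}$.

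The paper resolves this by observing that the generalized generating function is multilinear in $z_{41},z_{12},z_{23},z_{34}$, so interpolation at $0$ and $1$ (together with the facts that setting a corner parameter to $0$ forces a $1$ in that corner and that opposite corners on a boundary cannot both contain a $1$) expresses it as an explicit linear combination of $Z^\qua_n$, the four adjacent-boundary doubly-refined functions, and $Z_{n-2}$. It is precisely this interpolation — applied to all six partition functions in the Desnanot--Jacobi relation — that produces the quadratic factors $(z_i-1)(z_j-1)-yz_iz_j$ and $x(z_i-1)(z_j-1)-y$ and, more importantly, the five ``extra'' terms on the right-hand side of~\eqref{quadrel} involving $Z^\adj_{n-1}\,Z_{n-2}$, $\Z^\adj_{n-1}\,Z_{n-2}$ and $Z_{n-2}^2$. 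Your proposal attributes all of these to the Izergin--Korepin prefactor ratios; the prefactors do contribute the factor $y(z_4-z_2)(z_1-z_3)$ on the left and an overall normalization, but they cannot by themselves generate terms in which a lower-order generating function is multiplied by $Z_{n-2}$. Without the corner-parameter interpolation (or an equivalent device), the bookkeeping you describe would not close up into the stated identity.
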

The identity~\eqref{quadrel} holds for each $n\ge3$.
Furthermore, if $Z_0(x,y)$ is taken to be~1, then it can be seen, using
$Z^\adj_1(x,y;z_1,z_2)=\Z^\adj_1(x,y;z_1,z_2)=1$ and $Z^\qua_2(x,y;z_1,z_2,z_3,z_4)=1+xz_1z_2z_3z_4$ (from~\eqref{Zquad123}--\eqref{Zaltdef}),
that~\eqref{quadrel} also holds for $n=2$.

The proof of~\eqref{quadrel}, which will be given in Section~\ref{pf}, will involve
using a relation between a certain generalized ASM generating function and
the partition function of the six-vertex model with DWBC, and
then combining the Desnanot--Jacobi determinant
identity with the Izergin--Korepin formula for this partition function.

An alternative form of~\eqref{quadrel} will be obtained, and some further related results will be discussed,
in Section~\ref{add0}.

It can be seen that~\eqref{quadrel} enables $Z^\qua_n(x,y;z_1,z_2,z_3,z_4)$ to be obtained recursively,
using the initial conditions (from~\eqref{Zquad123})
$Z^\qua_1(x,y;z_1,z_2,z_3,z_4)=1$ and $Z^\qua_2(x,y;z_1,z_2,z_3,z_4)=1+xz_1z_2z_3z_4$, and the definitions (from~\eqref{Zdef}--\eqref{Zaltdef})
$Z^\adj_n(x,y;z_1,z_2)=Z^\qua_n(x,y;z_1,1,1,z_2)$, $\Z^\adj_n(x,y;z_1,z_2)=Z^\qua_n(x,y;z_1,z_2,1,1)$ and
$Z_n(x,y)=Z^\qua_n(x,y;1,1,1,1)$.  Accordingly, in this sense, $Z^\qua_n(x,y;z_1,z_2,z_3,z_4)$,
and all of the ASM generating functions of~\eqref{Zdef}--\eqref{Zaltdef} which are defined in terms of $Z^\qua_n(x,y;z_1,z_2,z_3,z_4)$,
are determined by~\eqref{quadrel}.

Note, however, that if the generating functions are computed recursively in this way,
then, for each successive~$n$, $Z^\qua_n(x,y;z_1,z_2,z_3,z_4)$ should first
be computed for arbitrary~$z_1$,~$z_2$,~$z_3$, and~$z_4$, with the factor $(z_1-z_3)(z_4-z_2)$
being explicitly cancelled from both sides of~\eqref{quadrel}, so
that division by zero is avoided when boundary parameters need to be set to~$1$ in subsequent computations.
Alternatively, certain expressions in which this cancellation has effectively been done, will be given in Section~\ref{new2}.

By replacing $z_2$ and $z_4$ by $\frac{1}{z_2}$ and $\frac{1}{z_4}$ respectively,
Theorem~\ref{thm} can be restated for the alternative quadruply-refined ASM generating function of~\eqref{Zaltdef} as
\begin{multline}\label{altquadrel}
y(z_1\!-\!z_3)(z_2\!-\!z_4)\,\Z^\qua_n(x,y;z_1,z_2,z_3,z_4)\,Z_{n-2}(x,y)=\\
\shoveleft{\bigl((z_1\!-\!1)(z_2\!-\!1)\!+\!yz_1\bigr)\bigl((z_3\!-\!1)(z_4\!-\!1)\!+\!yz_3\bigr)\,(z_2z_4)^{n-1}\;\times}\\
\shoveright{Z^\adj_{n-1}(x,y;\tfrac{1}{z_4},z_1)\,Z^\adj_{n-1}(x,y;\tfrac{1}{z_2},z_3)\;-}\\
\shoveleft{\bigl(x(z_4\!-\!1)(z_1\!-\!1)\!+\!yz_4\bigr)\bigl(x(z_2\!-\!1)(z_3\!-\!1)\!+\!yz_2\bigr)\,z_1z_3(z_2z_4)^{n-2}\;\times}\\
\shoveright{\Z^\adj_{n-1}(x,y;z_1,\tfrac{1}{z_2})\,\Z^\adj_{n-1}(x,y;z_3,\tfrac{1}{z_4})\;-}\\
(z_2\!-\!1)(z_3\!-\!1)\bigl((z_4\!-\!1)(z_1\!-\!1)\!+\!yz_1\bigr)\,(z_2z_4)^{n-1}\,Z^\adj_{n-1}(x,y;\tfrac{1}{z_4},z_1)\,Z_{n-2}(x,y)\;+\\
(z_3\!-\!1)(z_4\!-\!1)\bigl(x(z_1\!-\!1)(z_2\!-\!1)\!+\!yz_2\bigr)
\,z_1z_2^{\;n-2}(xz_3)^{n-1}\,\Z^\adj_{n-1}(x,y;z_1,\tfrac{1}{z_2})\,Z_{n-2}(x,y)\;-\\
(z_4\!-\!1)(z_1\!-\!1)\bigl((z_2\!-\!1)(z_3\!-\!1)\!+\!yz_3\bigr)\,(z_2z_4)^{n-1}\,Z^\adj_{n-1}(x,y;\tfrac{1}{z_2},z_3)\,Z_{n-2}(x,y)\;+\\
(z_1\!-\!1)(z_2\!-\!1)\bigl(x(z_3\!-\!1)(z_4\!-\!1)\!+\!yz_4\bigr)
\,z_3z_4^{\;n-2}(xz_1)^{n-1}\,\Z^\adj_{n-1}(x,y;z_3,\tfrac{1}{z_4})\,Z_{n-2}(x,y)\;+\\
(z_1\!-\!1)(z_2\!-\!1)(z_3\!-\!1)(z_4\!-\!1)\bigl((z_2z_4)^{n-1}-(x^2z_1z_3)^{n-1}\bigr)\,Z_{n-2}(x,y)^2.\end{multline}

\subsection{Corollaries for arbitrary bulk parameters $x$ and $y$}\label{new2}
In this section, some consequences of Theorem~\ref{thm}, for the case in which the bulk
parameters $x$ and $y$ remain arbitrary, are derived.
These results are obtained from~\eqref{quadrel} using only definitions
and elementary properties
of ASM generating functions from Section~\ref{genfunc},
and the initial conditions (from~\eqref{Zquad123}--\eqref{Zaltdef}) $Z^\adj_1(x,y;z_1,z_2)=
\Z^\adj_1(x,y;z_1,z_2)=1$ and  $Z^\adj_2(x,y;z_1,z_2)=
\Z^\adj_2(x,y;z_1,z_2)=1+xz_1z_2$.
In particular, most of the results are obtained by setting boundary parameters to~$1$,
using~\eqref{1} for the corresponding specializations, and, in some cases, solving recursion relations.

Alternative forms of some of the results of this section will be given in Section~\ref{add0}.

\begin{corollary}\label{cor1}The triply-refined ASM generating function satisfies
\begin{multline}\label{triprel}
(z_2\!-\!z_1)(z_3\!-\!1)\,Z^\tri_n(x,y;z_1,z_2,z_3)\,Z_{n-2}(x,y)=\\
\bigl((z_2\!-\!1)(z_3\!-\!1)\!-\!yz_2z_3\bigr)\,z_1\,Z^\adj_{n-1}(x,y;z_1,z_3)\,Z_{n-1}(x,y;z_2)\;-\\
\bigl(x(z_1\!-\!1)(z_3\!-\!1)\!-\!y\bigr)\,z_1z_2z_3\,\Z^\adj_{n-1}(x,y;z_2,z_3)\,Z_{n-1}(x,y;z_1)\;-\\
(z_1\!-\!1)(z_3\!-\!1)\,z_2\,Z_{n-1}(x,y;z_2)\,Z_{n-2}(x,y)\;+\\
(z_2\!-\!1)(z_3\!-\!1)\,z_1\,(xz_2z_3)^{n-1}\,Z_{n-1}(x,y;z_1)\,Z_{n-2}(x,y).
\end{multline}
\end{corollary}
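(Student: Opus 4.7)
The plan is to derive \eqref{triprel} by specializing the quadruply-refined identity \eqref{quadrel} via the substitution $z_2=1$, which by \eqref{Zdef} converts $Z^\qua_n(x,y;z_1,1,z_3,z_4)$ on the LHS into the triply-refined generating function $Z^\tri_n(x,y;z_1,z_3,z_4)$. Under this substitution, three of the seven terms on the RHS of \eqref{quadrel} vanish outright, namely those carrying an explicit prefactor $(z_2-1)(z_3-1)$, $(z_1-1)(z_2-1)$, or $(z_1-1)(z_2-1)(z_3-1)(z_4-1)$; only the two ``adjacent-adjacent'' product terms and the two surviving mixed terms with $Z_{n-2}(x,y)$ remain.

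In those four surviving terms, the compound factors $(z_1-1)(z_2-1)-yz_1z_2$ and $x(z_2-1)(z_3-1)-y$ collapse to $-yz_1$ and $-y$ respectively, and every adjacent-boundary generating function appearing with a unit argument reduces to a singly-refined one via the relations $Z^\adj_{n-1}(x,y;1,z)=\Z^\adj_{n-1}(x,y;z,1)=Z_{n-1}(x,y;z)$ from \eqref{1} (the first equality requires the symmetry $Z^\adj_n(x,y;z_1,z_2)=Z^\adj_n(x,y;z_2,z_1)$ of \eqref{Zsymm}). After these simplifications, the key observation is that every surviving term inherits an overall factor of $y$, either already explicit in the RHS of \eqref{quadrel} or newly produced by the two collapses just mentioned. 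Since the LHS $y(z_4-1)(z_1-z_3)Z^\tri_n(x,y;z_1,z_3,z_4)Z_{n-2}(x,y)$ also carries an explicit $y$, one may legitimately cancel $y$ as a polynomial identity, and the result is free of $y$ on the LHS, matching the form of \eqref{triprel}.

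A final sign flip together with the relabelling $z_3\mapsto z_2$, $z_4\mapsto z_3$, and one further use of $Z^\adj_{n-1}(x,y;z_3,z_1)=Z^\adj_{n-1}(x,y;z_1,z_3)$ then identifies the four surviving terms with the four terms on the RHS of \eqref{triprel} in order. The argument is essentially bookkeeping and presents no real obstacle; the only mildly delicate point is tracking how the various factors of type $(z_i-1)(z_j-1)-yz_iz_j$ and $x(z_i-1)(z_j-1)-y$ conspire to yield a common factor of $y$ across all surviving terms, which is precisely what permits the LHS of the corollary to emerge without one.
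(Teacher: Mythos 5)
Your proposal is correct and is exactly the paper's proof: set $z_2=1$ in~\eqref{quadrel} and then relabel $z_3$ as $z_2$ and $z_4$ as $z_3$, with the bookkeeping (three terms vanishing, the collapses to $-yz_1$, $-yz_3$ and $-y$, the reductions via~\eqref{1} and the symmetry of $Z^\adj$, and the overall cancellation of $y$ and sign flip) all as you describe.
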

\begin{proof}Set $z_2=1$ in~\eqref{quadrel} (and then relabel $z_3$ as $z_2$ and $z_4$ as~$z_3$).\end{proof}
\begin{corollary}\label{cor2}The adjacent-boundary doubly-refined ASM generating functions satisfy the recursion relations
\begin{gather}
\notag(z_1\!-\!1)(z_2\!-\!1)\,Z^\adj_n(x,y;z_1,z_2)\,Z_{n-2}(x,y)=yz_1z_2\,Z^\adj_{n-1}(x,y;z_1,z_2)\,Z_{n-1}(x,y)\;+\qquad\qquad\\
\notag\bigl(x(z_1\!-\!1)(z_2\!-\!1)\!-\!y\bigr)\,z_1z_2\,Z_{n-1}(x,y;z_1)\,Z_{n-1}(x,y;z_2)\;+\\
\label{adjdoub1}\qquad\qquad\qquad\qquad\qquad\qquad\qquad(z_1\!-\!1)(z_2\!-\!1)\,Z_{n-1}(x,y)\,Z_{n-2}(x,y),\\
\notag(z_1\!-\!1)(z_2\!-\!1)\,\Z^\adj_n(x,y;z_1,z_2)\,Z_{n-2}(x,y)=yz_1z_2\,\Z^\adj_{n-1}(x,y;z_1,z_2)\,Z_{n-1}(x,y)\;+\qquad\qquad\\
\notag\bigl((z_1\!-\!1)(z_2\!-\!1)\!-\!yz_1z_2\bigr)\,Z_{n-1}(x,y;z_1)\,Z_{n-1}(x,y;z_2)\;+\\
\label{adjdoub1alt}\qquad\qquad\qquad\qquad\qquad\qquad(z_1\!-\!1)(z_2\!-\!1)\,(xz_1z_2)^{n-1}\,Z_{n-1}(x,y)\,Z_{n-2}(x,y).\end{gather}
\end{corollary}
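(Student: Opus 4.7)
The plan is to deduce both identities from Theorem~\ref{thm} by specializing two boundary parameters to $1$ in~\eqref{quadrel}, exactly as in the proof of Corollary~\ref{cor1}. The key observation is that, by the definitions in~\eqref{Zdef} and~\eqref{Zaltdef}, one has $Z^\adj_n(x,y;z_1,z_2)=Z^\qua_n(x,y;z_1,1,1,z_2)$ and $\Z^\adj_n(x,y;z_1,z_2)=Z^\qua_n(x,y;z_1,z_2,1,1)$. Each of the two identities in Corollary~\ref{cor2} should therefore emerge from one such specialization.

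For~\eqref{adjdoub1}, I would set $z_2=z_3=1$ in~\eqref{quadrel}. The LHS then becomes $y(z_4-1)(z_1-1)\,Z^\adj_n(x,y;z_1,z_4)\,Z_{n-2}(x,y)$. On the RHS, the third, fourth, sixth and seventh lines all contain a factor of $(z_2-1)$ or $(z_3-1)$ and hence vanish. In each of the three surviving lines (the first, second and fifth), the bracketed expressions collapse using the relations $(z_2-1)(z_3-1)-yz_2z_3=-y$ and $x(z_2-1)(z_3-1)-y=-y$ at $z_2=z_3=1$. After applying $Z^\adj_{n-1}(x,y;1,1)=Z_{n-1}(x,y)$ and $\Z^\adj_{n-1}(x,y;z,1)=Z_{n-1}(x,y;z)$ from~\eqref{1}, together with the symmetry $Z^\adj_{n-1}(x,y;z_4,z_1)=Z^\adj_{n-1}(x,y;z_1,z_4)$ from~\eqref{Zsymm}, both sides carry a common factor of $y$. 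Cancelling that factor and relabelling $z_4$ as $z_2$ produces~\eqref{adjdoub1}.

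For~\eqref{adjdoub1alt}, I would instead set $z_3=z_4=1$. The LHS becomes $-y(z_2-1)(z_1-1)\,\Z^\adj_n(x,y;z_1,z_2)\,Z_{n-2}(x,y)$, and on the RHS only the first, second and sixth lines survive (the others contain vanishing $(z_3-1)$ or $(z_4-1)$ factors). An entirely analogous collapse of brackets, combined with the same specialization identities from~\eqref{1} and the symmetry $Z^\adj_{n-1}(x,y;z_1,1)=Z_{n-1}(x,y;z_1)$ applied to the first line, leaves each surviving term with a common factor of $-y$. Cancelling this factor yields~\eqref{adjdoub1alt}.

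Since~\eqref{quadrel} has already been proved in Theorem~\ref{thm}, there is no genuine conceptual obstacle: the derivation is entirely mechanical. The only point requiring care is the bookkeeping of signs and of the various factors of $y$ which appear both on the LHS and inside the surviving bracketed expressions on the RHS, and making sure that $Z^\adj_{n-1}$ and $\Z^\adj_{n-1}$ are correctly distinguished when reducing them to singly-refined or unrefined generating functions via~\eqref{1}.
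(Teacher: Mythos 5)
Your proposal is correct and is essentially the paper's argument: the paper obtains both identities by specializing boundary parameters in~\eqref{quadrel}, merely routing the computation through the intermediate triply-refined relation~\eqref{triprel} (setting $z_2=1$ first, then one more parameter), whereas you perform the composite specialization $z_2=z_3=1$ (resp.\ $z_3=z_4=1$, which is equivalent to the paper's $z_1=z_2=1$ via the reversal symmetry in~\eqref{Zsymm}) in a single step. Your accounting of which terms survive, the collapse of the bracketed factors to $-y$, and the common factors of $y$ and $-y$ to be cancelled all check out.
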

\begin{proof}
To obtain~\eqref{adjdoub1}, set $z_2=1$ in~\eqref{triprel} (and then relabel $z_3$ as $z_2$).
To obtain~\eqref{adjdoub1alt}, set $z_1=1$ in~\eqref{triprel} (and then relabel $z_2$ as $z_1$ and $z_3$ as $z_2$).
\end{proof}
Note that, if $Z_0(x,y)$ is taken to be~$1$, then~\eqref{adjdoub1} and~\eqref{adjdoub1alt} hold for all $n\ge2$.
\begin{corollary}\label{cor3}The triply-refined ASM generating function also satisfies
\begin{multline}\label{triprelalt}
y(z_2\!-\!z_1)z_3\,Z^\tri_n(x,y;z_1,z_2,z_3)\,Z_{n-1}(x,y)=\\
(z_1\!-\!1)\bigl((z_2\!-\!1)(z_3\!-\!1)\!-\!yz_2z_3\bigr)\,Z^\adj_n(x,y;z_1,z_3)\,Z_{n-1}(x,y;z_2)\;-\\
(z_2\!-\!1)\bigl(x(z_1\!-\!1)(z_3\!-\!1)\!-\!y\bigr)\,z_1z_3\,\Z^\adj_n(x,y;z_2,z_3)\,Z_{n-1}(x,y;z_1)\;-\\
(z_1\!-\!1)(z_2\!-\!1)(z_3\!-\!1)\,Z_{n-1}(x,y;z_2)\,Z_{n-1}(x,y)\;+\\
(z_1\!-\!1)(z_2\!-\!1)(z_3\!-\!1)\,z_1z_2^{n-1}(xz_3)^n\,Z_{n-1}(x,y;z_1)\,Z_{n-1}(x,y).
\end{multline}
\end{corollary}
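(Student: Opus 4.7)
The plan is to deduce \eqref{triprelalt} from Corollary~\ref{cor1} and Corollary~\ref{cor2}.  The target \eqref{triprelalt} differs from \eqref{triprel} exactly in the places where the recursions of Corollary~\ref{cor2} interpolate: on the left-hand side, the factor $Z_{n-2}(x,y)$ is replaced by $Z_{n-1}(x,y)$, while on the right-hand side the doubly-refined generating functions $Z^\adj_{n-1}$ and $\Z^\adj_{n-1}$ are replaced by $Z^\adj_n$ and $\Z^\adj_n$.  Reading \eqref{adjdoub1} and \eqref{adjdoub1alt} as expressions for the products $y\,z_1z_2\,Z^\adj_{n-1}(x,y;z_1,z_2)\,Z_{n-1}(x,y)$ and $y\,z_1z_2\,\Z^\adj_{n-1}(x,y;z_1,z_2)\,Z_{n-1}(x,y)$ in terms of $Z^\adj_n$ or $\Z^\adj_n$, singly-refined generating functions, and $Z_{n-1}\,Z_{n-2}$, provides exactly the transformation needed.

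The first step is to multiply both sides of \eqref{triprel} by $y\,z_3\,Z_{n-1}(x,y)$.  The first term on the right-hand side then carries a factor of $y\,z_1\,z_3\,Z^\adj_{n-1}(x,y;z_1,z_3)\,Z_{n-1}(x,y)$, and the second term carries $y\,z_2\,z_3\,\Z^\adj_{n-1}(x,y;z_2,z_3)\,Z_{n-1}(x,y)$ (with a residual factor of $z_1z_3$ pulled out).  The second step is to substitute for these two combinations using, respectively, \eqref{adjdoub1} with $(z_1,z_2)$ relabelled as $(z_1,z_3)$ and \eqref{adjdoub1alt} with $(z_1,z_2)$ relabelled as $(z_2,z_3)$.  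The resulting right-hand side is a sum in which every term carries either a factor of $Z_{n-2}(x,y)$ or the triple product $Z_{n-1}(x,y;z_1)\,Z_{n-1}(x,y;z_2)\,Z_{n-1}(x,y;z_3)$.

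The key algebraic observation is that the two terms of the latter type cancel: the contribution from the \eqref{adjdoub1} substitution is $-[(z_2\!-\!1)(z_3\!-\!1)-yz_2z_3]\,[x(z_1\!-\!1)(z_3\!-\!1)-y]\,z_1z_3$ times the triple product, while the \eqref{adjdoub1alt} substitution contributes exactly the negative of this.  Once these cancel, every remaining term has $Z_{n-2}(x,y)(z_3-1)$ as a common factor, which may be cancelled as a polynomial identity.  What is left rearranges directly into \eqref{triprelalt} once the coefficients of $Z_{n-1}(x,y;z_2)\,Z_{n-1}(x,y)$ and $Z_{n-1}(x,y;z_1)\,Z_{n-1}(x,y)$ are simplified using $[(z_2\!-\!1)(z_3\!-\!1)-yz_2z_3]+yz_2z_3=(z_2\!-\!1)(z_3\!-\!1)$ and $[x(z_1\!-\!1)(z_3\!-\!1)-y]+y=x(z_1\!-\!1)(z_3\!-\!1)$, producing the target coefficients $-(z_1\!-\!1)(z_2\!-\!1)(z_3\!-\!1)$ and $(z_1\!-\!1)(z_2\!-\!1)(z_3\!-\!1)\,z_1\,z_2^{\,n-1}(xz_3)^n$ respectively.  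The principal obstacle is purely bookkeeping; the substitutions produce roughly a dozen terms that must be tracked carefully to recognise both the cancellation of the triple-$Z_{n-1}(x,y;z_i)$ contributions and the emergence of $(z_3-1)\,Z_{n-2}(x,y)$ as a common factor.
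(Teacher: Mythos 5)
Your proposal is correct and follows essentially the same route as the paper, which likewise derives \eqref{triprelalt} by using \eqref{adjdoub1}--\eqref{adjdoub1alt} to trade $Z^\adj_{n-1}(x,y;z_1,z_3)$ and $\Z^\adj_{n-1}(x,y;z_2,z_3)$ in \eqref{triprel} for $Z^\adj_n(x,y;z_1,z_3)$ and $\Z^\adj_n(x,y;z_2,z_3)$, and then cancels an overall factor containing $z_3-1$. Your additional details --- the cancellation of the two triple-product contributions and the simplifications yielding the coefficients $-(z_1\!-\!1)(z_2\!-\!1)(z_3\!-\!1)$ and $(z_1\!-\!1)(z_2\!-\!1)(z_3\!-\!1)z_1z_2^{n-1}(xz_3)^n$ --- all check out.
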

\begin{proof}Use~\eqref{adjdoub1}--\eqref{adjdoub1alt} to
replace $Z^\adj_{n-1}(x,y;z_1,z_3)$ and $\Z^\adj_{n-1}(x,y;z_2,z_3)$ in~\eqref{triprel}
by terms which instead contain $Z^\adj_n(x,y;z_1,z_3)$ and $\Z^\adj_n(x,y;z_2,z_3)$,
and then cancel an overall factor which contains a term $z_3-1$.\end{proof}
\begin{corollary}\label{cor4}The opposite-boundary doubly-refined ASM generating function satisfies
\begin{multline}\label{opprel}
(z_1\!-\!z_2)\,Z^\opp_n(x,y;z_1,z_2)\,Z_{n-1}(x,y)=(z_1\!-\!1)\,z_2\,Z_n(x,y;z_1)\,Z_{n-1}(x,y;z_2)\;-\\
z_1\,(z_2\!-\!1)\,Z_{n-1}(x,y;z_1)\,Z_n(x,y;z_2).\end{multline}
\end{corollary}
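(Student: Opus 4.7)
The plan is to derive (opprel) by specializing the alternative triply-refined identity of Corollary~\ref{cor3} at $z_3=1$. This is the natural companion to the proofs of the preceding corollaries, which all proceeded by setting a single boundary parameter of a previously derived identity to~$1$ and simplifying via the elementary reductions of~\eqref{1}. The key observation that makes this specialization productive is the third identity of~\eqref{1}, $Z^\tri_n(x,y;z_1,z_2,1)=Z^\opp_n(x,y;z_1,z_2)$, which causes the triply-refined generating function on the LHS of~\eqref{triprelalt} to collapse precisely into the quantity that appears in~\eqref{opprel}.

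Concretely, I would substitute $z_3=1$ on both sides of~\eqref{triprelalt}. The third and fourth terms on the RHS carry an explicit factor $(z_3-1)$ and vanish. In the first term, the bracket $(z_2-1)(z_3-1)-yz_2z_3$ collapses to $-yz_2$, and $Z^\adj_n(x,y;z_1,1)$ reduces to $Z_n(x,y;z_1)$ by the fourth identity of~\eqref{1}. In the second term, the bracket $x(z_1-1)(z_3-1)-y$ collapses to $-y$, and $\Z^\adj_n(x,y;z_2,1)$ reduces to $Z_n(x,y;z_2)$ via the same line of~\eqref{1}. The LHS becomes $y(z_2-z_1)Z^\opp_n(x,y;z_1,z_2)Z_{n-1}(x,y)$.

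Collecting these simplifications and then cancelling the common overall factor of $y$ while flipping signs to match the ordering $(z_1-z_2)$ on the LHS gives exactly~\eqref{opprel}. There is essentially no obstacle here: the argument is a clean one-parameter specialization followed by bookkeeping, and the only thing that needs to be checked carefully is the reduction $\Z^\adj_n(x,y;z,1)=Z_n(x,y;z)$, which is explicit in~\eqref{1}. As noted in Section~\ref{prev1}, this recovers the identity of Stroganov and of Colomo--Pronko, and it is pleasing that it emerges here as an immediate consequence of Corollary~\ref{cor3}, which itself was obtained directly from Theorem~\ref{thm}.
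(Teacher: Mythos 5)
Your proposal is correct and coincides exactly with the paper's proof, which reads simply ``Set $z_3=1$ in~\eqref{triprelalt}''; your bookkeeping of the vanishing terms, the reductions $Z^\adj_n(x,y;z_1,1)=\Z^\adj_n(x,y;z_2,1)=Z_n(x,y;\cdot)$ from~\eqref{1}, and the final cancellation of $y$ with a sign flip is all accurate.
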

\begin{proof}Set $z_3=1$ in~\eqref{triprelalt}.\end{proof}
Note that Corollary~\ref{cor4} is a previously-known result,
as already given in~\eqref{propeq1} and discussed in Section~\ref{prev1}.
\begin{corollary}\label{cor5}
The adjacent-boundary doubly-refined ASM generating functions can be expressed as
\begin{align}\notag Z^\adj_n(x,y;z_1,z_2)&=Z_{n-1}(x,y)\,\biggl(1+
\sum_{i=1}^{n-1}\biggl(\frac{y\,z_1\,z_2}{(z_1\!-\!1)(z_2\!-\!1)}\biggr)^{n-i}\times\\
\label{adjdoub2}&\qquad\qquad\qquad\quad
\biggl(1+\frac{(x(z_1\!-\!1)(z_2\!-\!1)\!-\!y)\,Z_i(x,y;z_1)\,Z_i(x,y;z_2)}{y\,Z_{i-1}(x,y)\,Z_i(x,y)}\biggr)\!\biggr),\\[2mm]
\notag\Z^\adj_n(x,y;z_1,z_2)&=Z_{n-1}(x,y)\,\biggl((xz_1z_2)^{n-1}+
\sum_{i=1}^{n-1}\biggl(\frac{y}{(z_1\!-\!1)(z_2\!-\!1)}\biggr)^{n-i}\times\\
\label{adjdoub2alt}&\hspace{-15mm}
\biggl(x^{i-1}(z_1z_2)^{n-1}+\frac{(z_1z_2)^{n-i-1}((z_1\!-\!1)(z_2\!-\!1)\!-\!yz_1z_2)\,Z_i(x,y;z_1)\,Z_i(x,y;z_2)}
{y\,Z_{i-1}(x,y)\,Z_i(x,y)}\biggr)\!\biggr),
\end{align}
where, in the sums over $i$, $Z_0(x,y)$ is taken to be 1.\end{corollary}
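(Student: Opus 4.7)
The plan is to derive each formula by recognising the corresponding recursion from Corollary~\ref{cor2} as a first-order linear recursion in~$n$ for a ratio, which can then be solved in closed form by iteration.

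For~\eqref{adjdoub2}, I would first divide both sides of~\eqref{adjdoub1} by $(z_1\!-\!1)(z_2\!-\!1)\,Z_{n-1}(x,y)\,Z_{n-2}(x,y)$, treating $z_1$ and $z_2$ as generic (so that division by $(z_1\!-\!1)(z_2\!-\!1)$ is permissible; the resulting identity between polynomials in $z_1,z_2$ can then be read off on the relevant denominator-free form). Writing $R_n := Z^\adj_n(x,y;z_1,z_2)/Z_{n-1}(x,y)$ and setting $\alpha := yz_1z_2/((z_1\!-\!1)(z_2\!-\!1))$ together with $\beta_i := (x(z_1\!-\!1)(z_2\!-\!1)\!-\!y)\,Z_i(x,y;z_1)\,Z_i(x,y;z_2)/(y\,Z_{i-1}(x,y)\,Z_i(x,y))$, the recursion becomes
\[R_n \;=\; \alpha\,R_{n-1} \;+\; 1 \;+\; \alpha\,\beta_{n-1}.\]
Iterating from the initial value $R_1 = 1$ (which holds since $Z^\adj_1(x,y;z_1,z_2) = Z_0(x,y) = 1$) gives
\[R_n \;=\; 1 \;+\; \sum_{i=1}^{n-1}\alpha^{n-i}(1+\beta_i),\]
which is~\eqref{adjdoub2} after multiplication by $Z_{n-1}(x,y)$.

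For~\eqref{adjdoub2alt}, I would apply exactly the same strategy to~\eqref{adjdoub1alt}, setting up the analogous first-order linear recursion for the ratio $\Z^\adj_n(x,y;z_1,z_2)/Z_{n-1}(x,y)$ and iterating from the value~$1$ at $n=1$. The differences from~\eqref{adjdoub1} are that the inhomogeneous term is $(xz_1z_2)^{n-1}\,Z_{n-1}(x,y)\,Z_{n-2}(x,y)$ rather than $Z_{n-1}(x,y)\,Z_{n-2}(x,y)$, and that the coefficient of $Z_{n-1}(x,y;z_1)\,Z_{n-1}(x,y;z_2)$ is $(z_1\!-\!1)(z_2\!-\!1)\!-\!yz_1z_2$ rather than $x(z_1\!-\!1)(z_2\!-\!1)\!-\!y$. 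These two changes account, respectively, for the factors $x^{i-1}(z_1z_2)^{n-1}$ and $(z_1z_2)^{n-i-1}$ which appear in the summand of~\eqref{adjdoub2alt}: the former tracks how the $(xz_1z_2)^{n-1}$ contribution is propagated by successive applications of the multiplier $\alpha$, and the latter arises because the $Z_i\,Z_i$ term enters the iteration at stage $i$ and is then carried through $n-i-1$ further multiplications.

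No substantial obstacle arises here, since the corollary amounts to the formal solution of a first-order linear recursion with a known inhomogeneity. The only care needed is bookkeeping: matching the initial condition with the empty-sum convention at $n=1$, verifying the formulas for small $n$ (e.g.\ $n=2$, where~\eqref{adjdoub2} must reduce to $Z^\adj_2(x,y;z_1,z_2) = 1+xz_1z_2$), and ensuring that the denominators $(z_1\!-\!1)(z_2\!-\!1)$ introduced by the division are harmless because the original identities~\eqref{adjdoub1}--\eqref{adjdoub1alt} are polynomial in $z_1,z_2$.
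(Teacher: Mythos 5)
Your proposal is correct and is essentially the paper's argument: the paper simply verifies that the closed forms satisfy the initial conditions $Z^\adj_1=\Z^\adj_1=1$ and the recursions \eqref{adjdoub1}--\eqref{adjdoub1alt}, which is the same content as your unrolling of the first-order linear recursions (the solution being uniquely determined either way). Your bookkeeping checks out, e.g.\ the geometric parts $\alpha^{n-1}+\sum_{i=1}^{n-1}\alpha^{n-1-i}(xz_1z_2)^i$ and $(xz_1z_2)^{n-1}+\sum_{i=1}^{n-1}\delta^{n-i}x^{i-1}(z_1z_2)^{n-1}$ agree after reindexing.
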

\begin{proof}
It can be checked straightforwardly (again taking $Z_0(x,y)=1$), that the initial conditions $Z^\adj_1(x,y;z_1,z_2)=\Z^\adj_1(x,y;z_1,z_2)=1$
and recursion relations~\eqref{adjdoub1}--\eqref{adjdoub1alt} are satisfied by~\eqref{adjdoub2}--\eqref{adjdoub2alt}.
\end{proof}
\begin{corollary}\label{cor6}
The two types of adjacent-boundary doubly-refined ASM generating function are related by
\begin{multline}\label{Zadjadj}
((z_1\!-\!1)(z_2\!-\!1)\!-\!yz_1z_2)\,Z^\adj_n(x,y;z_1,z_2)-(x(z_1\!-\!1)(z_2\!-\!1)\!-\!y)\,z_1z_2\,\Z^\adj_n(x,y;z_1,z_2)\\
=(z_1\!-\!1)(z_2\!-\!1)\bigl(1-(xz_1z_2)^n\bigr)\,Z_{n-1}(x,y).\end{multline}
\end{corollary}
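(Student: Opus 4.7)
Write $a=(z_1\!-\!1)(z_2\!-\!1)$ and $b=yz_1z_2$ for brevity, so the identity to prove reads $(a-b)Z^\adj_n(x,y;z_1,z_2)-(xa-y)z_1z_2\,\Z^\adj_n(x,y;z_1,z_2)=a(1-(xz_1z_2)^n)Z_{n-1}(x,y)$. My plan is to prove this by induction on $n$, with the two recursion relations \eqref{adjdoub1} and \eqref{adjdoub1alt} from Corollary~\ref{cor2} as the key input.

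The essential observation is that \eqref{adjdoub1} contains the singly-refined product $Z_{n-1}(x,y;z_1)Z_{n-1}(x,y;z_2)$ with coefficient $(xa-y)z_1z_2$, while \eqref{adjdoub1alt} contains the same product with coefficient $(a-b)$. Accordingly, I would form the linear combination equal to $(a-b)$ times \eqref{adjdoub1} minus $(xa-y)z_1z_2$ times \eqref{adjdoub1alt}. The singly-refined terms cancel identically, and what remains is a first-order recurrence of the shape $a\,Q_n\,Z_{n-2}=b\,Q_{n-1}\,Z_{n-1}+a\bigl[(a-b)-(xa-y)z_1z_2(xz_1z_2)^{n-1}\bigr]Z_{n-1}Z_{n-2}$, where $Q_n:=(a-b)Z^\adj_n-(xa-y)z_1z_2\,\Z^\adj_n$ is exactly the quantity appearing on the left-hand side of~\eqref{Zadjadj}.

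It then remains to verify that the desired right-hand side $R_n:=a\bigl(1-(xz_1z_2)^n\bigr)Z_{n-1}(x,y)$ satisfies the same recurrence: after dividing through by $aZ_{n-1}Z_{n-2}$ and using $b=yz_1z_2$, this reduces to the elementary identity $a\bigl(1-(xz_1z_2)^n\bigr)-b\bigl(1-(xz_1z_2)^{n-1}\bigr)=(a-b)-(xa-y)z_1z_2(xz_1z_2)^{n-1}$, which is a short polynomial manipulation. For the base case, $Z^\adj_1=\Z^\adj_1=1$ and $Z_0(x,y)=1$ give $Q_1=(a-b)-(xa-y)z_1z_2=a-b-xaz_1z_2+b=a(1-xz_1z_2)=R_1$, matching exactly. (One may alternatively use $n=2$ with $Z^\adj_2=\Z^\adj_2=1+xz_1z_2$ as a check.) An induction on $n$ then yields $Q_n=R_n$ for all $n\ge 1$.

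The only subtle point is that the recurrence for $Q_n$ formally requires division by $aZ_{n-2}(x,y)$ in the polynomial ring, but since $Z^\adj_n$ and $\Z^\adj_n$ genuinely satisfy \eqref{adjdoub1} and \eqref{adjdoub1alt}, so does the linear combination $Q_n$, and the divisibility holds automatically. I do not foresee any serious obstacle: once the cancellation of the $Z_{n-1}(x,y;z_1)Z_{n-1}(x,y;z_2)$ terms is observed, the remaining work is routine algebra together with Corollary~\ref{cor2}.
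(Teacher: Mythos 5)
Your proof is correct and follows essentially the same route as the paper's own (second) argument: the paper likewise derives \eqref{Zadjadj} from the recursions \eqref{adjdoub1}--\eqref{adjdoub1alt} by showing that $((z_1\!-\!1)(z_2\!-\!1)\!-\!yz_1z_2)\,Z^\adj_n-(z_1\!-\!1)(z_2\!-\!1)\,Z_{n-1}$ and $(x(z_1\!-\!1)(z_2\!-\!1)\!-\!y)\,z_1z_2\,\Z^\adj_n-(z_1\!-\!1)(z_2\!-\!1)(xz_1z_2)^n\,Z_{n-1}$ satisfy the same first-order recursion and initial condition, which is exactly your induction on $Q_n-R_n$ with different bookkeeping. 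The cancellation of the $Z_{n-1}(x,y;z_1)\,Z_{n-1}(x,y;z_2)$ terms and the elementary polynomial identity you check are the same computations.
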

\begin{proof}
This can be obtained directly from~\eqref{adjdoub2}--\eqref{adjdoub2alt}.
Alternatively, it can be obtained from~\eqref{adjdoub1}--\eqref{adjdoub1alt},
by showing that $((z_1\!-\!1)(z_2\!-\!1)\!-\!yz_1z_2)\,Z^\adj_n(x,y;z_1,z_2)-(z_1\!-\!1)(z_2\!-\!1)\,Z_{n-1}(x,y)$
and $(x(z_1\!-\!1)(z_2\!-\!1)\!-\!y)\,z_1z_2\,\Z^\adj_n(x,y;z_1,z_2)-(z_1\!-\!1)(z_2\!-\!1)(xz_1z_2)^n\,Z_{n-1}(x,y)$
satisfy the same recursion relation and initial condition.
\end{proof}
Note that, from~\eqref{Zsymm}, the two types of adjacent-boundary doubly-refined ASM generating function are also related by
\begin{equation}\label{Zadj}
Z^\adj_n(x,y;z_1,z_2)=x^{n(n-1)/2}\,(z_1z_2)^{n-1}\,\Z^\adj_n\bigl(\tfrac{1}{x},\tfrac{y}{x};\tfrac{1}{z_1},\tfrac{1}{z_2}\bigr).\end{equation}
Combining~\eqref{Zadjadj} and~\eqref{Zadj}, it follows that the adjacent-boundary doubly-refined ASM generating function satisfies
\begin{multline}\label{Zadjsymm}
((z_1\!-\!1)(z_2\!-\!1)\!-\!yz_1z_2)\,Z^\adj_n(x,y;z_1,z_2)=\\
(x(z_1\!-\!1)(z_2\!-\!1)\!-\!y)\,(z_1z_2)^n\,x^{n(n-1)/2}\,Z^\adj_n(\tfrac{1}{x},\tfrac{y}{x};\tfrac{1}{z_1},\tfrac{1}{z_2})\;+\\
(z_1\!-\!1)(z_2\!-\!1)\bigl(1-(xz_1z_2)^n\bigr)\,Z_{n-1}(x,y).\end{multline}

By using~\eqref{quadrel} and \eqref{Zadjadj}, together with identities from~\eqref{Zsymm}, it is also possible to obtain
a certain identity which involves $Z^\qua_n(x,y;z_1,z_2,z_3,z_4)$ and $Z^\qua_n(x,y;z_3,z_2,z_1,z_4)$
(or $Z^\qua_n(x,y;z_1,z_2,z_3,z_4)$ and $Z^\qua_n(x,y;z_1,z_4,z_3,z_2)$).
A form of this identity will be given in~\eqref{Yaddsymm}.
\begin{corollary}\label{cor7}The boundary parameter coefficients in the singly-refined ASM generating function,
as defined in~(\ref{Zcoeff}), satisfy
\begin{multline}\label{singZcoeff}
Z_n(x,y)_k=Z_{n-1}(x,y)\,\delta_{k,0}\,+\,
Z_{n-1}(x,y)\,\sum_{i=0}^{k-1}\Biggl(y^{i+1}\binom{k\!-\!1}{i}\binom{n\!-\!1}{i\!+\!1}\;+\\
y^i\sum_{j_1=0}^{k-i-1}\,\sum_{j_2=0}^{n-i-2}\frac{Z_{n-i-1}(x,y)_{j_1}\,Z_{n-i-1}(x,y)_{j_2}}{Z_{n-i-1}(x,y)\,Z_{n-i-2}(x,y)}\,
\Biggr(x\binom{k\!-\!j_1\!-\!2}{i\!-\!1}\binom{n\!-\!j_2\!-\!2}{i}\;-\\
y\binom{k\!-\!j_1\!-\!1}{i}\binom{n\!-\!j_2\!-\!1}{i\!+\!1}\Biggr)\Biggr),\end{multline}
where $Z_0(x,y)$, if it appears, is taken to be 1.
\end{corollary}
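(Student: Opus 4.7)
The approach is to extract coefficients of $z_1^k$ from the explicit formula \eqref{adjdoub2} for $Z^\adj_n(x,y;z_1,z_2)$ provided by Corollary~\ref{cor5}, using the relation $Z_n(x,y;z)=Z^\adj_n(x,y;z,1)$ from~\eqref{1}, which gives
\[Z_n(x,y)_k=\sum_{k_2=0}^{n-1}Z^\adj_n(x,y)_{k,k_2}.\]
The plan is to compute each $Z^\adj_n(x,y)_{k,k_2}$ as a formal bivariate Taylor coefficient of the RHS of \eqref{adjdoub2} expanded around $z_1=z_2=0$, and then to sum over $k_2$ from $0$ to $n-1$.

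The key expansion is $\bigl(\tfrac{yz_1z_2}{(z_1-1)(z_2-1)}\bigr)^{m}=y^m(z_1z_2)^m(1-z_1)^{-m}(1-z_2)^{-m}$, whose Taylor coefficient of $z_1^{k_1}z_2^{k_2}$ equals $y^m\binom{k_1-1}{m-1}\binom{k_2-1}{m-1}$ (for $k_1,k_2\ge m$, and $0$ otherwise), via $(1-z)^{-m}=\sum_{j\ge0}\binom{m+j-1}{j}z^j$.  Extracting the coefficient of $z_1^kz_2^{k_2}$ of the summand of \eqref{adjdoub2} at index $i$, and then summing $k_2$ from $0$ to $n-1$, collapses by the hockey-stick identity $\sum_{k_2=m}^{n-1}\binom{k_2-1}{m-1}=\binom{n-1}{m}$ and its convolution variant $\sum_{k_2=0}^{n-1}[z^{k_2}]\bigl(z^m(1-z)^{-m}f(z)\bigr)=\sum_{j_2}f_{j_2}\binom{n-j_2-1}{m}$.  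Since both the $k_2$-sum (from $0$ to $n-1$) and the $i$-sum (from $1$ to $n-1$) are finite, they may freely be interchanged; although individual terms of \eqref{adjdoub2} are rational functions whose Taylor expansions have nonzero coefficients beyond $k_2=n-1$, this is immaterial since we need only coefficients with $k_2\le n-1$ to recover $Z_n(x,y)_k$ from $Z^\adj_n(x,y;z_1,z_2)$.

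Reindexing Corollary~\ref{cor5}'s summation variable by $i\mapsto n-i-1$ aligns it with the index $i$ in \eqref{singZcoeff}.  Under this reindexing, the ``$1$'' inside the parenthesis of \eqref{adjdoub2} yields the pure binomial term $y^{i+1}\binom{k-1}{i}\binom{n-1}{i+1}$.  The factor $\bigl(x(z_1-1)(z_2-1)-y\bigr)Z_{n-i-1}(x,y;z_1)Z_{n-i-1}(x,y;z_2)/\bigl(yZ_{n-i-2}(x,y)Z_{n-i-1}(x,y)\bigr)$ splits into two pieces: the $x(z_1-1)(z_2-1)$ piece cancels one power of $(1-z_1)(1-z_2)$ in the expansion of $P_{n-i}=\bigl(\tfrac{yz_1z_2}{(z_1-1)(z_2-1)}\bigr)^{n-i}$, shifting the effective exponent by one and producing, after coefficient extraction, the convolution binomials $x\binom{k-j_1-2}{i-1}\binom{n-j_2-2}{i}$; the $-y$ piece keeps the original exponent and produces $-y\binom{k-j_1-1}{i}\binom{n-j_2-1}{i+1}$.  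The initial term $Z_{n-1}(x,y)$ outside the sum in \eqref{adjdoub2} yields the $Z_{n-1}(x,y)\delta_{k,0}$ term of \eqref{singZcoeff}.

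The main obstacle is the combinatorial bookkeeping needed to match indices, denominators $Z_{i-1}Z_i\leftrightarrow Z_{n-i-2}Z_{n-i-1}$, signs, and binomial-coefficient conventions for negative arguments, notably $\binom{-1}{-1}=1$ (arising in the $x$-piece at $i=0$, and reflecting the fact that the coefficient of $z^0$ in $(1-z)^0=1$ is~$1$); once these are carefully tracked, the hockey-stick identity collapses each finite $k_2$-sum into the compact closed form of \eqref{singZcoeff}.
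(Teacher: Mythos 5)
Your proposal is correct and takes essentially the same route as the paper: there, too, one extracts the coefficient of $z_1^{k_1}z_2^{k_2}$ from \eqref{adjdoub2} by expanding $1/((z_1-1)(z_2-1))^{n-i}$ as a binomial series (yielding the intermediate formula \eqref{doubZcoeff} for $Z^\adj_n(x,y)_{k_1,k_2}$) and then sums over $k_2$ from $0$ to $n-1$ via the standard (hockey-stick) binomial summation. Your remarks on the reindexing $i\mapsto n-i-1$ and on the convention $\binom{-1}{-1}=1$ correctly identify the only delicate bookkeeping points.
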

\begin{proof}Expanding the factors $1/((z_1-1)(z_2-1))^{n-i}$ in~\eqref{adjdoub2}
as binomial series, equating coefficients of $z_1^{k_1}z_2^{k_2}$ on both sides of~\eqref{adjdoub2},
and using the definitions~\eqref{Zcoeff}, gives
\begin{multline}\label{doubZcoeff}
Z^\adj_n(x,y)_{k_1,k_2}=Z_{n-1}(x,y)\,\delta_{k_1,0}\,\delta_{k_2,0}\,+\,
Z_{n-1}(x,y)\,\sum_{i=0}^{\min(k_1,k_2)-1}\Biggl(y^{i+1}\binom{k_1\!-\!1}{i}\binom{k_2\!-\!1}{i}\;+\\
y^i\sum_{j_1=0}^{k_1-i-1}\,\sum_{j_2=0}^{k_2-i-1}\frac{Z_{n-i-1}(x,y)_{j_1}\,Z_{n-i-1}(x,y)_{j_2}}{Z_{n-i-1}(x,y)\,Z_{n-i-2}(x,y)}\,
\Biggl(x\binom{k_1\!-\!j_1\!-\!2}{i\!-\!1}\binom{k_2\!-\!j_2\!-\!2}{i\!-\!1}\;-\\
y\binom{k_1\!-\!j_1\!-\!1}{i}\binom{k_2\!-\!j_2\!-\!1}{i}\Biggr)\Biggr).
\end{multline}
Summing \eqref{doubZcoeff} over $k_2$, using a standard binomial coefficient summation identity, and relabelling $k_1$ as $k$, then
gives~\eqref{singZcoeff}.
\end{proof}
\begin{corollary}\label{cor8}The unrefined ASM generating function satisfies
\begin{multline}\label{unrefZ}
Z_n(x,y)=Z_{n-1}(x,y)\Biggl(1\,+\,
\sum_{i=0}^{n-2}\Biggl(y^{i+1}\binom{n\!-\!1}{i\!+\!1}^2\;+\\
\frac{xy^i\bigl(\sum_{j=0}^{n-i-2}\binom{n-j-2}{i}Z_{n-i-1}(x,y)_j\bigr)^2-
y^{i+1}\bigl(\sum_{j=0}^{n-i-2}\binom{n-j-1}{i+1}Z_{n-i-1}(x,y)_j\bigr)^2}{Z_{n-i-1}(x,y)\,Z_{n-i-2}(x,y)}\Biggr)\Biggr),\end{multline}
where $Z_0(x,y)$ is taken to be 1.
\end{corollary}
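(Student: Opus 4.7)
The plan is to derive Corollary~\ref{cor8} directly from Corollary~\ref{cor7} by summing the identity~\eqref{singZcoeff} over the boundary-parameter index $k$, using the relation $Z_n(x,y)=\sum_{k=0}^{n-1}Z_n(x,y)_k$ from~\eqref{Zcoeffid}. The only tools required beyond~\eqref{singZcoeff} are the Vandermonde/hockey-stick identity for binomial coefficients and a careful interchange of the orders of summation.

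First I would write
\[
Z_n(x,y)\;=\;\sum_{k=0}^{n-1}Z_n(x,y)_k,
\]
and substitute~\eqref{singZcoeff}. The $\delta_{k,0}$ term immediately contributes $Z_{n-1}(x,y)$, which matches the leading $1$ inside the outer bracket on the RHS of~\eqref{unrefZ} (after the overall factor $Z_{n-1}(x,y)$ is pulled out). For the double sum $\sum_{k=0}^{n-1}\sum_{i=0}^{k-1}$, I would swap the order to $\sum_{i=0}^{n-2}\sum_{k=i+1}^{n-1}$, which produces the outer sum $\sum_{i=0}^{n-2}$ in~\eqref{unrefZ}. Everything not involving $k$ can then be pulled out of the inner sum.

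Next I would perform the resulting sums over $k$ using the hockey-stick identity $\sum_{k=i+1}^{n-1}\binom{k-1}{i}=\binom{n-1}{i+1}$, together with its shifted variants
\[
\sum_{k=j+i+1}^{n-1}\binom{k-j-2}{i-1}=\binom{n-j-2}{i},\qquad
\sum_{k=j+i+1}^{n-1}\binom{k-j-1}{i}=\binom{n-j-1}{i+1}.
\]
Applied to the first term of~\eqref{singZcoeff}, this gives the contribution $y^{i+1}\binom{n-1}{i+1}^2$. Applied to the two terms inside the $(j_1,j_2)$-double sum (noting that the $j_2$-sum is independent of $k$ and so passes through unchanged, while the $j_1$-sum must be interchanged with the $k$-sum so that $k$ runs from $j_1+i+1$ to $n-1$), the binomial factor $\binom{k-j_1-2}{i-1}$ is converted to $\binom{n-j_1-2}{i}$ and $\binom{k-j_1-1}{i}$ to $\binom{n-j_1-1}{i+1}$.

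After this, relabelling $j_1\to j$ and comparing with the $j_2$-sum (which has precisely the same range and the same binomial factors), the two separate sums collapse into squares, producing exactly the numerator
\[
xy^i\Bigl(\textstyle\sum_{j=0}^{n-i-2}\binom{n-j-2}{i}Z_{n-i-1}(x,y)_j\Bigr)^2-y^{i+1}\Bigl(\textstyle\sum_{j=0}^{n-i-2}\binom{n-j-1}{i+1}Z_{n-i-1}(x,y)_j\Bigr)^2
\]
divided by $Z_{n-i-1}(x,y)\,Z_{n-i-2}(x,y)$, which is the required form in~\eqref{unrefZ}. The whole derivation is thus a routine but slightly intricate bookkeeping exercise; the only step that needs any care is the interchange of the $k$- and $j_1$-summations and the observation that the resulting $j_1$- and $j_2$-sums have identical shape so that they may be combined into a perfect square. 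No new identities or inputs beyond Corollary~\ref{cor7} and elementary binomial-coefficient manipulations are needed.
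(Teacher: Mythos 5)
Your proposal is correct and is exactly the paper's own proof: the paper derives~\eqref{unrefZ} by summing~\eqref{singZcoeff} over $k$ via $Z_n(x,y)=\sum_{k=0}^{n-1}Z_n(x,y)_k$ and applying the standard (hockey-stick) binomial summation. The interchange of the $k$- and $j_1$-summations and the recombination of the two decoupled sums into squares are carried out correctly.
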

\begin{proof}Sum~\eqref{singZcoeff} over $k$, and use a standard binomial coefficient summation identity.
\end{proof}
Note that various other expressions for $Z_n(x,y)_k$ and $Z_n(x,y)$, which provide alternatives to~\eqref{singZcoeff} and~\eqref{unrefZ},
can be obtained by taking the limits $z_1\rightarrow1$ and $z_2\rightarrow1$ of~\eqref{adjdoub2} in ways somewhat
different from those used in Corollaries~\ref{cor7} and~\ref{cor8}.

As observed in Section~\ref{new1}, the main result~\eqref{quadrel} enables all of the ASM generating functions of~\eqref{Zquad},
\eqref{Zdef} and~\eqref{Zaltdef}
to be computed recursively.  The corollaries of this section essentially comprise special cases
of~\eqref{quadrel} which apply to specific ASM
generating functions, and can alternatively be used for their computation.

For example, it can be seen that~\eqref{singZcoeff} and \eqref{unrefZ} give
$Z_n(x,y)_k$ and $Z_n(x,y)$ in terms of $Z_i(x,y)_k$ and $Z_i(x,y)$ for $i=1,\ldots,n-1$, and thereby
enable the singly-refined and unrefined ASM generating functions to be computed recursively.
The adjacent-boundary doubly-refined, opposite-boundary doubly-refined, triply-refined and
quadruply-refined ASM generating functions can then be computed using~\eqref{adjdoub2}--\eqref{adjdoub2alt},~\eqref{opprel},~\eqref{triprel}
or~\eqref{triprelalt}, and~\eqref{quadrel}, respectively.

Note that the opposite-boundary doubly-refined, singly-refined and unrefined ASM generating functions
can instead be computed using the determinant formulae~\eqref{Zdetdoub},~\eqref{Zdetsing} and~\eqref{Zdetun}.

\subsection{Corollaries for bulk parameters $x=y=1$}\label{new3}
In this section, some consequences of Theorem~\ref{thm}, for the case in which the bulk
parameters $x$ and $y$ are both set to~$1$, are derived.
In contrast to the consequences of Theorem~\ref{thm} given in Section~\ref{new2} for the case of arbitrary~$x$ and~$y$,
which were obtained essentially using only~\eqref{quadrel},
the consequences in this section are obtained using~\eqref{quadrel} together with
the nontrivial relation~\eqref{Zoppadjid} between the adjacent-boundary and opposite-boundary doubly-refined ASM generating functions
at $x=y=1$.

\begin{corollary}\label{cor9}
The alternative quadruply-refined ASM generating function at $x=y=1$ satisfies
\begin{multline}\label{unwquadalt}
(z_4z_1\!-\!z_4\!+\!1)(z_1z_2\!-\!z_1\!+\!1)(z_2z_3\!-\!z_2\!+\!1)(z_3z_4\!-\!z_3\!+\!1)\,\Z^\qua_n(1,1;z_1,z_2,z_3,z_4)=\\
\shoveleft{\frac{z_1\,z_2\,z_3\,z_4}{\A_{n-2}\,(z_1\!-\!z_3)(z_2\!-\!z_4)}\;\times}\\
\shoveright{\bigl((z_1z_2\!-\!z_1\!+\!1)(z_1z_2\!-\!z_2\!+\!1)(z_3z_4\!-\!z_3\!+\!1)(z_3z_4\!-\!z_4\!+\!1)\,
Z^\opp_{n-1}(1,1;z_4,z_1)\,Z^\opp_{n-1}(1,1;z_2,z_3)\;-}\\
\shoveright{(z_4z_1\!-\!z_4\!+\!1)(z_4z_1\!-\!z_1\!+\!1)(z_2z_3\!-\!z_2\!+\!1)(z_2z_3\!-\!z_3\!+\!1)\,
Z^\opp_{n-1}(1,1;z_1,z_2)\,Z^\opp_{n-1}(1,1;z_3,z_4)\bigr)\;+}\\
(z_2\!-\!1)(z_3\!-\!1)(z_1z_2\!-\!z_1\!+\!1)(z_3z_4\!-\!z_3\!+\!1)z_4z_1z_2^{n-1}Z^\opp_{n-1}(1,1;z_4,z_1)\;+\\
(z_3\!-\!1)(z_4\!-\!1)(z_2z_3\!-\!z_2\!+\!1)(z_4z_1\!-\!z_4\!+\!1)z_1z_2z_3^{n-1}Z^\opp_{n-1}(1,1;z_1,z_2)\;+\\
(z_4\!-\!1)(z_1\!-\!1)(z_3z_4\!-\!z_3\!+\!1)(z_1z_2\!-\!z_1\!+\!1)z_2z_3z_4^{n-1}Z^\opp_{n-1}(1,1;z_2,z_3)\;+\\
(z_1\!-\!1)(z_2\!-\!1)(z_4z_1\!-\!z_4\!+\!1)(z_2z_3\!-\!z_2\!+\!1)z_3z_4z_1^{n-1}Z^\opp_{n-1}(1,1;z_3,z_4)\;+\\
\shoveleft{\quad(z_1\!-\!1)(z_2\!-\!1)(z_3\!-\!1)(z_4\!-\!1)\bigl((z_1z_2\!-\!z_1\!+\!1)(z_3z_4\!-\!z_3\!+\!1)(z_2z_4)^{n-1}\;+}\\
(z_2z_3\!-\!z_2\!+\!1)(z_4z_1\!-\!z_4\!+\!1)(z_1z_3)^{n-1}\bigr)\A_{n-2}.\end{multline}
\end{corollary}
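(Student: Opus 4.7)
The plan is to derive Corollary~\ref{cor9} as a direct specialization of Theorem~\ref{thm}, in the alternative form~\eqref{altquadrel}, to the combinatorial point $x=y=1$, followed by a systematic conversion of every adjacent-boundary doubly-refined ASM generating function that appears into an opposite-boundary one. The first step is to set $x=y=1$ in~\eqref{altquadrel}, noting that $Z_{n-2}(1,1)=\A_{n-2}$, and to divide both sides by $(z_1-z_3)(z_2-z_4)\,\A_{n-2}$. The left-hand side then becomes the left-hand side of~\eqref{unwquadalt} divided through by the common factors, while the right-hand side is a sum of terms bilinear in $Z^\adj_{n-1}$ or $\Z^\adj_{n-1}$ with scalar polynomial coefficients in the $z_i$'s, plus single-adjacent terms multiplied by $\A_{n-2}$, plus a pure $\A_{n-2}^2$ term.

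The second step is to eliminate all $Z^\adj$ and $\Z^\adj$ factors in favour of $Z^\opp$'s. For the two $Z^\adj_{n-1}$ factors I would apply~\eqref{Zoppadjid} directly, which rewrites $(z_1+z_2-1)\,Z^\adj_n(1,1;z_1,z_2)$ as $z_1z_2^{\,n}\,Z^\opp_n(1,1;z_1,1/z_2)$ minus a correction proportional to $\A_{n-1}$. For the two $\Z^\adj_{n-1}$ factors I would first use the $x=1$ case of~\eqref{Zsymm}, namely $\Z^\adj_n(1,1;z_1,z_2)=(z_1z_2)^{n-1}Z^\adj_n(1,1;1/z_1,1/z_2)$, to reduce them to ordinary $Z^\adj_{n-1}$'s and then apply~\eqref{Zoppadjid}. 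Each of the four adjacent generating functions thereby splits cleanly into an opposite-boundary piece and an $\A_{n-2}$ piece.

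Expanding all four substitutions and regrouping produces three classes of terms: products $Z^\opp_{n-1}\cdot Z^\opp_{n-1}$ (coming from cross-substitution of the two double-adjacent terms in~\eqref{altquadrel}), products $Z^\opp_{n-1}\cdot\A_{n-2}$ (from the remaining cross terms of the double-adjacent terms together with the four single-adjacent terms), and pure $\A_{n-2}^2$ contributions (from both cross-substitutions and from the explicit $Z_{n-2}(x,y)^2$ term in~\eqref{altquadrel}). I would verify, using the symmetries $Z^\opp_{n-1}(1,1;z,w)=Z^\opp_{n-1}(1,1;w,z)=(zw)^{n-2}Z^\opp_{n-1}(1,1;1/z,1/w)$, that the four $Z^\opp_{n-1}\cdot Z^\opp_{n-1}$ contributions combine pairwise and that their coefficient is divisible by $(z_1-z_3)(z_2-z_4)$, yielding the antisymmetric first term of~\eqref{unwquadalt}. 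The $Z^\opp_{n-1}\cdot\A_{n-2}$ terms and the $\A_{n-2}^2$ terms should match the corresponding middle and last lines of~\eqref{unwquadalt} after straightforward polynomial simplification.

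The main obstacle is the large-scale rational-function bookkeeping in the final regrouping step, in particular establishing that the coefficient of each pair $Z^\opp_{n-1}(1,1;z_i,z_j)\,Z^\opp_{n-1}(1,1;z_k,z_\ell)$ on the right-hand side is divisible by $(z_1-z_3)(z_2-z_4)$ so that the division producing the factor $1/((z_1-z_3)(z_2-z_4))$ in Corollary~\ref{cor9} is legitimate. This divisibility is the algebraic reflection of the fact that both sides of~\eqref{unwquadalt} are polynomials in $z_1,z_2,z_3,z_4$; it reduces to a finite number of polynomial identities in the $z_i$'s, independent of $n$, which can be checked either by direct expansion or by computer algebra. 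Once this cancellation is verified, the three classes of terms assemble into exactly the right-hand side of~\eqref{unwquadalt}, completing the proof.
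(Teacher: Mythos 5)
Your proposal is correct and is essentially the paper's own proof: specialize \eqref{altquadrel} to $x=y=1$, use the symmetry \eqref{Zadj} to reduce the $\Z^\adj_{n-1}$ factors to $Z^\adj_{n-1}$ factors, then apply \eqref{Zoppadjid} throughout to convert every adjacent-boundary generating function into an opposite-boundary one and regroup. The only remark worth making is that the obstacle you flag is smaller than you think: the first term of \eqref{unwquadalt} retains the factor $1/((z_1\!-\!z_3)(z_2\!-\!z_4))$ explicitly, so the divisibility of the combined $Z^\opp_{n-1}\cdot Z^\opp_{n-1}$ coefficient by $(z_1\!-\!z_3)(z_2\!-\!z_4)$ need not be established (it follows a posteriori from the polynomiality of both sides), and only the remaining, single-$Z^\opp_{n-1}$ and pure $\A_{n-2}$ terms require the routine verification that this denominator cancels.
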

\begin{proof}Setting $x=y=1$ in~\eqref{altquadrel}, and using~\eqref{Zadj}, gives
\begin{multline}\label{unwquadalt1}
(z_1\!-\!z_3)(z_2\!-\!z_4)\Z^\qua_n(1,1;z_1,z_2,z_3,z_4)\,\A_{n-2}=\\
(z_1z_2\!-\!z_2\!+\!1)(z_3z_4\!-\!z_4\!+\!1)(z_2z_4)^{n-1}\,Z^\adj_{n-1}(1,1;\tfrac{1}{z_4},z_1)\,Z^\adj_{n-1}(1,1;\tfrac{1}{z_2},z_3)\;-\\
(z_4z_1\!-\!z_1\!+\!1)(z_2z_3\!-\!z_3\!+\!1)(z_1z_3)^{n-1}\,Z^\adj_{n-1}(1,1;\tfrac{1}{z_1},z_2)\,Z^\adj_{n-1}(1,1;\tfrac{1}{z_3},z_4)\;-\\
(z_2\!-\!1)(z_3\!-\!1)(z_4z_1\!-\!z_4\!+\!1)(z_2z_4)^{n-1}\,Z^\adj_{n-1}(1,1;\tfrac{1}{z_4},z_1)\,\A_{n-2}\;+\\
(z_3\!-\!1)(z_4\!-\!1)(z_1z_2\!-\!z_1\!+\!1)(z_1z_3)^{n-1}\,Z^\adj_{n-1}(1,1;\tfrac{1}{z_1},z_2)\,\A_{n-2}\;-\\
(z_4\!-\!1)(z_1\!-\!1)(z_2z_3\!-\!z_2\!+\!1)(z_2z_4)^{n-1}\,Z^\adj_{n-1}(1,1;\tfrac{1}{z_2},z_3)\,\A_{n-2}\;+\\
(z_1\!-\!1)(z_2\!-\!1)(z_3z_4\!-\!z_3\!+\!1)(z_1z_3)^{n-1}\,Z^\adj_{n-1}(1,1;\tfrac{1}{z_3},z_4)\,\A_{n-2}\;+\\
(z_1\!-\!1)(z_2\!-\!1)(z_3\!-\!1)(z_4\!-\!1)\bigl((z_2z_4)^{n-1}-(z_1z_3)^{n-1}\bigr)\,\A_{n-2}^2.\end{multline}
Multiplying both sides of~\eqref{unwquadalt1} by
$(z_4z_1\!-\!z_4\!+\!1)(z_1z_2\!-\!z_1\!+\!1)(z_2z_3\!-\!z_2\!+\!1)(z_3z_4\!-\!z_3\!+\!1)/((z_1\!-\!z_3)(z_2\!-\!z_4)\A_{n-2})$,
and using~\eqref{Zoppadjid}, in the form
$(z_1z_2\!-\!z_1\!+\!1)z_1^{n-2}Z^\adj_{n-1}(1,1;\tfrac{1}{z_1},z_2)=
z_2Z^\opp_{n-1}(1,1;z_1,z_2)+z_1^{n-2}(z_1\!-\!1)(z_2\!-\!1)\A_{n-2}$,
then gives~\eqref{unwquadalt}.
\end{proof}
\begin{corollary}\label{cor10}
The triply-refined ASM generating function at $x=y=1$ satisfies
\begin{multline}\label{unwtripalt}
(z_1z_3\!-\!z_3\!+\!1)(z_2z_3\!-\!z_2\!+\!1)z_3^{n-1}Z^\tri_n(1,1;z_1,z_2,\tfrac{1}{z_3})=\\
\shoveleft{\quad\frac{z_1\,z_3}{\A_{n-2}\,(z_1\!-\!z_2)(z_3\!-\!1)}\,\bigl((z_1z_3\!-\!z_1\!+\!1)(z_1z_3\!-\!z_3\!+\!1)z_2\,
Z_{n-1}(1,1;z_1)\,Z^\opp_{n-1}(1,1;z_2,z_3)\;-}\\
\shoveright{(z_2z_3\!-\!z_2\!+\!1)(z_2z_3\!-\!z_3\!+\!1)z_1\,Z_{n-1}(1,1;z_2)\,Z^\opp_{n-1}(1,1;z_1,z_3)\bigr)\;+}\\
(z_2\!-\!1)(z_3\!-\!1)(z_1z_3\!-\!z_3\!+\!1)z_1z_2^{n-1}Z_{n-1}(1,1;z_1)\;+\\
(z_1\!-\!1)(z_3\!-\!1)(z_2z_3\!-\!z_2\!+\!1)z_3^{n-1}Z_{n-1}(1,1;z_2).\end{multline}
\end{corollary}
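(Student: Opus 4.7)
The plan is to mimic the proof of Corollary~\ref{cor9}, but starting from the triply-refined identity \eqref{triprel} of Corollary~\ref{cor1} instead of from \eqref{altquadrel}. Alternatively, and perhaps more directly, \eqref{unwtripalt} can be obtained by specialising the quadruply-refined identity \eqref{unwquadalt} of Corollary~\ref{cor9} at $z_2=1$.

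For the first approach, I would set $x=y=1$ in \eqref{triprel}, substitute $z_3\to 1/z_3$, and multiply both sides by $-z_3^n$ to clear the negative powers. The LHS then becomes $(z_2-z_1)(z_3-1)\,z_3^{n-1}\,Z^\tri_n(1,1;z_1,z_2,1/z_3)\,\A_{n-2}$, and after the corresponding cancellations on the RHS the four summands acquire coefficients proportional to $(z_2z_3-z_3+1)$, $(z_1z_3-z_1+1)$, $(z_1-1)(z_3-1)$ and $(z_2-1)(z_3-1)$ respectively. Next, applying \eqref{Zadj} at $x=y=1$ with $(z_1,z_2)\mapsto(z_2,1/z_3)$ replaces $\Z^\adj_{n-1}(1,1;z_2,1/z_3)$ by $(z_2/z_3)^{n-2}\,Z^\adj_{n-1}(1,1;1/z_2,z_3)$, so that both remaining adjacent-boundary doubly-refined terms take the form $Z^\adj_{n-1}(1,1;1/a,b)$. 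The rearrangement of \eqref{Zoppadjid} used in the proof of Corollary~\ref{cor9}, $(ab-a+1)\,a^{n-2}\,Z^\adj_{n-1}(1,1;1/a,b)=b\,Z^\opp_{n-1}(1,1;a,b)+a^{n-2}(a-1)(b-1)\A_{n-2}$, then converts these (with $(a,b)=(z_3,z_1)$ and $(a,b)=(z_2,z_3)$ respectively) into expressions in $Z^\opp_{n-1}(1,1;z_1,z_3)$ and $Z^\opp_{n-1}(1,1;z_2,z_3)$, the symmetry of $Z^\opp$ in its two arguments being used to order these as they appear in \eqref{unwtripalt}.

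The final step is to multiply the resulting identity by $(z_1z_3-z_3+1)(z_2z_3-z_2+1)/\bigl((z_3-1)(z_2-z_1)\A_{n-2}\bigr)$. The LHS becomes the target $(z_1z_3-z_3+1)(z_2z_3-z_2+1)\,z_3^{n-1}\,Z^\tri_n(1,1;z_1,z_2,1/z_3)$, while on the RHS the $Z^\opp_{n-1}$ contributions combine directly into the bracketed term of \eqref{unwtripalt}. The terms proportional to $Z_{n-1}(1,1;z_1)$ or $Z_{n-1}(1,1;z_2)$ arise from two sources each and must be consolidated using the elementary identities $z_1(z_2z_3-z_3+1)-z_2(z_1z_3-z_3+1)=(z_3-1)(z_2-z_1)$ and $(z_2z_3-z_2+1)-(z_1z_3-z_1+1)=(z_3-1)(z_2-z_1)$; in each case the factor $z_2-z_1$ in the denominator is cancelled, reproducing the last two summands of \eqref{unwtripalt}.

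The main obstacle is this final consolidation: it is entirely elementary but demands careful bookkeeping of several cancellations. In the alternative approach, one sets $z_2=1$ in \eqref{unwquadalt} and uses $\Z^\qua_n(1,1;z_1,1,z_3,z_4)=z_4^{n-1}\,Z^\tri_n(1,1;z_1,z_3,1/z_4)$, which follows from \eqref{Zaltdef} together with \eqref{Zdef}, along with the specialisations $Z^\opp_{n-1}(1,1;z,1)=Z^\opp_{n-1}(1,1;1,z)=Z_{n-1}(1,1;z)$ following from \eqref{Zsymm} and \eqref{1}. The terms of \eqref{unwquadalt} carrying a factor $(z_2-1)$ vanish identically at $z_2=1$; dividing both sides by the factor $z_3$ present on the LHS and relabelling $(z_3,z_4)\to(z_2,z_3)$ then produces \eqref{unwtripalt} directly. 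No new conceptual ingredients beyond those already employed in the proof of Corollary~\ref{cor9} are required.
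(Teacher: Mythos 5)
Your proposal is correct, and your second (``alternative'') route is precisely the paper's proof: Corollary~\ref{cor10} is obtained by setting $z_2=1$ in~\eqref{unwquadalt} and relabelling $z_3$ as $z_2$ and $z_4$ as $z_3$. Your supporting observations for that route all check out: the terms of~\eqref{unwquadalt} carrying a factor $(z_2-1)$ vanish, $\Z^\qua_n(1,1;z_1,1,z_3,z_4)=z_4^{n-1}Z^\tri_n(1,1;z_1,z_3,\tfrac{1}{z_4})$ follows from~\eqref{Zaltdef} and~\eqref{Zdef}, the specializations $Z^\opp_{n-1}(1,1;z,1)=Z_{n-1}(1,1;z)$ handle the surviving doubly-refined factors, and both sides do acquire a common factor of (old) $z_3$ that must be cancelled. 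Your first route --- running the Corollary~\ref{cor9} machinery one level down, starting from~\eqref{triprel} at $x=y=1$ with $z_3\to\tfrac{1}{z_3}$, converting the two adjacent-boundary terms via~\eqref{Zadj} and the rearranged~\eqref{Zoppadjid}, and consolidating with the identities $z_1(z_2z_3-z_3+1)-z_2(z_1z_3-z_3+1)=(z_3-1)(z_2-z_1)=(z_2z_3-z_2+1)-(z_1z_3-z_1+1)$ --- is a valid parallel derivation; it trades the one-line specialization of an already-established quadruply-refined identity for a self-contained computation from the triply-refined relation, which is slightly more work but does not require Corollary~\ref{cor9} as an input. Since the two routes commute (both amount to specializing $z_2=1$ either before or after the conversion to opposite-boundary generating functions), either is acceptable; the paper simply chooses the shorter one.
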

\begin{proof}Set $z_2=1$ in~\eqref{unwquadalt} (and then relabel $z_3$ as $z_2$ and $z_4$ as~$z_3$).\end{proof}
It can be seen that~\eqref{unwquadalt} and~\eqref{unwtripalt} provide alternative formulae
to~\eqref{unwquad}--\eqref{unwquadopp} for the quad\-ruply- and triply-refined ASM generating functions at $x=y=1$.
In fact,~\eqref{unwquadalt} and~\eqref{unwtripalt}
differ from~\eqref{unwquadopp} and~\eqref{unwtrip}, respectively,
only in the first terms on each RHS.
It can also be seen that the last five terms on the RHS of~\eqref{unwquadalt} could be replaced by the last
five terms on the RHS of~\eqref{unwquad}.

\section{Proofs}\label{proof}
In this section, proofs of Theorem~\ref{thm}, and of several of the results of Section~\ref{prev}, are given.
Preliminary results which are needed for these proofs are obtained or stated in Sections~\ref{ASMgengf}--\ref{DesJacsect},
while the main steps of the proofs are given in Sections~\ref{pf} and~\ref{add3}--\ref{add2}.  In Section~\ref{add0},
alternative forms of certain results of Section~\ref{newres}, and some further results, are discussed.

\subsection{A generalized ASM generating function}\label{ASMgengf}
In this section, a generating function which generalizes the
quadruply-refined ASM generating function is introduced,
and some of its elementary properties are identified.  This generating function will be shown, in Section~\ref{partfunc},
to be proportional to a certain case of the partition function of the six-vertex model with DWBC.

The generalized ASM generating function involves the six statistics of~\eqref{numuA}--\eqref{rho}, together
with four further statistics associated with the entries in the corners of an ASM, and is defined,
for indeterminates $x$, $y$, $z_1$, $z_2$, $z_3$, $z_4$, $z_{41}$, $z_{12}$, $z_{23}$ and $z_{34}$, as
\begin{multline}\label{Z}Z^\gen_n(x,y;z_1,z_2,z_3,z_4;z_{41},z_{12},z_{23},z_{34})=\\[1.5mm]
\sum_{A\in\ASM(n)}x^{\nu(A)}\,y^{\mu(A)}\,z_1^{\rT(A)}\,z_2^{\rR(A)}
\,z_3^{\rB(A)}\,z_4^{\rL(A)}\,z_{41}^{\;\;1-A_{11}}\,z_{12}^{\;\;1-A_{1n}}
\,z_{23}^{\;\;1-A_{nn}}\,z_{34}^{\;\;1-A_{n1}}.\end{multline}
For example, for $n=3$, the function is
\begin{multline}Z^\gen_3(x,y;z_1,z_2,z_3,z_4;z_{41},z_{12},z_{23},z_{34})=
z_{12}\,z_{34}+
x\,z_1\,z_4\,z_{41}\,z_{12}\,z_{34}\;+\\
x\,z_2\,z_3\,z_{12}\,z_{23}\,z_{34}+
x^2\,z_1\,z_2\,z_3^2\,z_4^2\,z_{41}\,z_{12}\,z_{23}+
x^2\,z_1^2\,z_2^2\,z_3\,z_4\,z_{41}\,z_{23}\,z_{34}\;+\\
x^3\,z_1^2\,z_2^2\,z_3^2\,z_4^2\,z_{41}\,z_{23}+
x\,y\,z_1\,z_2\,z_3\,z_4\,z_{41}\,z_{12}\,z_{23}\,z_{34},\end{multline}
where the terms are written in an order which corresponds to that used in~\eqref{ASM123}.

It can be seen immediately that the quadruply-refined ASM generating function is
\begin{equation}\label{Zquagen}Z^\qua_n(x,y;z_1,z_2,z_3,z_4)=Z^\gen_n(x,y;z_1,z_2,z_3,z_4;1,1,1,1).\end{equation}

By acting on $\ASM(n)$ with transposition or anticlockwise quarter-turn rotation, and using~\eqref{TQ},
it follows that
\begin{multline}\label{Zrefrot}
Z^\gen_n(x,y;z_1,z_2,z_3,z_4;z_{41},z_{12},z_{23},z_{34})=Z^\gen_n(x,y;z_4,z_3,z_2,z_1;z_{41},z_{34},z_{23},z_{12})\\
=x^{n(n-1)/2}\,(z_1z_2z_3z_4)^{n-1}\,Z^\gen_n\bigl(\tfrac{1}{x},\tfrac{y}{x};
\tfrac{1}{z_2},\tfrac{1}{z_3},\tfrac{1}{z_4},\tfrac{1}{z_1};z_{12},z_{23},z_{34},z_{41}\bigr).
\end{multline}

The properties of ASMs with a 1 in the top-left corner, as outlined in Section~\ref{stat}, imply that
\begin{align}\notag Z^\gen_n(x,y;z_1,z_2,z_3,z_4;0,z_{12},z_{23},z_{34})&=Z^\gen_n(x,y;0,z_2,z_3,z_4;z_{41},z_{12},z_{23},z_{34})\\
\label{Z0}&=z_{12}\,z_{34}\,Z^\gen_{n-1}(x,y;1,z_2,z_3,1;1,1,z_{23},1),\end{align}
and the fact that an ASM cannot contain a 1 in both its top-left and top-right corner, implies that
\begin{equation}\label{Z00}Z^\gen_n(x,y;z_1,z_2,z_3,z_4;0,0,z_{23},z_{34})=0.\end{equation}

It can be seen from the definition~\eqref{Z} that $Z^\gen_n(x,y;z_1,z_2,z_3,z_4;z_{41},z_{12},z_{23},z_{34})$ is
linear in $z_{41}$,~$z_{12}$, $z_{23}$ and~$z_{34}$.
Therefore, using interpolation for these parameters at $0$ and $1$, it follows that
\begin{multline}\label{Zint}Z^\gen_n(x,y;z_1,z_2,z_3,z_4;z_{41},z_{12},z_{23},z_{34})=\\
\sum_{i_1,i_2,i_3,i_4\in\{0,1\}}\negmedspace
(1-z_{41})^{1-i_1}\,z_{41}^{\;\;\;i_1}\,(1-z_{12})^{1-i_2}\,z_{12}^{\;\;\;i_2}\,
(1-z_{23})^{1-i_3}\,z_{23}^{\;\;\;i_3}\,(1-z_{34})^{1-i_4}\,z_{34}^{\;\;\;i_4}\,\times\qquad\qquad\\[-3mm]
Z^\gen_n(x,y;z_1,z_2,z_3,z_4;i_1,i_2,i_3,i_3).\end{multline}

Applying cases of~\eqref{Zdef}--\eqref{Zaltdef}
and~\eqref{Zquagen}--\eqref{Z00} to~\eqref{Zint} (as a result of which, for example,~\eqref{Zrefrot} and~\eqref{Z00} imply
that nine of the sixteen terms on the RHS of~\eqref{Zint} vanish, while~\eqref{Zdef} and~\eqref{Zquagen}--\eqref{Z0} give
$Z^\gen_n(x,y;z_1,z_2,z_3,z_4;0,1,0,1)=Z_{n-2}(x,y)$), it now follows that
the generalized ASM generating function can be expressed entirely in terms of quadruply-refined, adjacent-boundary doubly-refined
and unrefined ASM generating functions as
\begin{multline}\label{Zexp}
Z^\gen_n(x,y;z_1,z_2,z_3,z_4;z_{41},z_{12},z_{23},z_{34})=
z_{41}\,z_{12}\,z_{23}\,z_{34}\,Z^\qua_n(x,y;z_1,z_2,z_3,z_4)\;+\\
z_{41}\,z_{12}\,(1\!-\!z_{23})\,z_{34}\,Z^\adj_{n-1}(x,y;z_4,z_1)\;+\\
z_{41}\,z_{12}\,z_{23}\,(1\!-\!z_{34})\,z_1z_2\,(xz_3z_4)^{n-1}\,\Z^\adj_{n-1}(x,y;z_1,z_2)\;+\\
(1\!-\!z_{41})\,z_{12}\,z_{23}\,z_{34}\,Z^\adj_{n-1}(x,y;z_2,z_3)\;+\\
z_{41}\,(1\!-\!z_{12})\,z_{23}\,z_{34}\,z_3z_4\,(xz_1z_2)^{n-1}\,\Z^\adj_{n-1}(x,y;z_3,z_4)\;+\\
(1\!-\!z_{41})\,z_{12}\,(1\!-\!z_{23})\,z_{34}\,Z_{n-2}(x,y)\;+\\
z_{41}\,(1\!-\!z_{12})\,z_{23}\,(1\!-\!z_{34})\,x^{2n-3}\,(z_1z_2z_3z_4)^{n-1}\,Z_{n-2}(x,y).\end{multline}

Some special cases of~\eqref{Zexp}, which will be used in Section~\ref{pf}, are
\begin{gather}
\notag Z^\gen_n(x,y;z_1,1,1,z_2;z_{12},1,1,1)=Z^\gen_n(x,y;1,z_1,z_2,1;1,1,z_{12},1)\\
\notag\qquad=z_{12}\,Z^\adj_n(x,y;z_1,z_2)+(1\!-\!z_{12})\,Z_{n-1}(x,y),\\
\notag Z^\gen_n(x,y;z_1,z_2,1,1;1,z_{12},1,1)=Z^\gen_n(x,y;1,1,z_1,z_2;1,1,1,z_{12})\\
\label{Zexpadj}\qquad=z_{12}\,\Z^\adj_n(x,y;z_1,z_2)+(1\!-\!z_{12})\,(xz_1z_2)^{n-1}\,Z_{n-1}(x,y).\end{gather}

\subsection{The bijection between ASMs and configurations of the six-vertex model with DWBC}\label{ASMbijsect}
In this section, the set of configurations of the six-vertex
model on an $n\times n$ grid with DWBC is described, and the
details of a natural bijection between this set and $\ASM(n)$ are summarized.  This is standard material,
with similar accounts having been given, for example, by Behrend, Di Francesco and
Zinn-Justin~\cite[Secs.~2.1 \&~3.1]{BehDifZin12},~\cite[Sec.~5.1]{BehDifZin13}.

The six-vertex, or square ice, model
is a much-studied integrable statistical mechanical model
(see, for example, Baxter~\cite[Chaps.~8~\&~9]{Bax82} for further information and references),
with DWBC for the model having been introduced and first studied by Korepin~\cite{Kor82}.
The bijection between ASMs and configurations of the model with DWBC was first
discussed by Elkies, Kuperberg, Larsen and Propp~\cite[Sec.~7]{ElkKupLarPro92b},
with the details having mostly been observed previously, but using different terminology, by Robbins and
Rumsey~\cite[pp.~179--180]{RobRum86}.
There exist closely-related bijections between ASMs and certain sets of osculating lattice paths
(see Section~\ref{bij} for references),
and between ASMs and certain fully-packed loop configurations (see, for example, Propp~\cite[Sec.~7]{Pro01}).

Let $\G$ be the $n\times n$ undirected grid with vertex set
$\{(i,j)\mid i,j=0,\ldots,n+1\}\setminus\{(0,0),(0,n+1),(n+1,0),(n+1,n+1)\}$,
where $(i,j)$ is taken to be in the $i$th row from the top and $j$th column from the left, and for which there are
horizontal edges between $(i,j)$ and $(i,j\pm1)$, and vertical edges between $(i,j)$ and $(i\pm1,j)$,
for each $i,j=1,\ldots,n$. This grid is shown in Figure~\ref{grid1}.
Each vertex or edge of~$\G$ can be described as either internal or external, i.e.,
the~$n^2$ vertices of degree~$4$ are internal,
the remaining~$4n$ vertices of degree~$1$ are external, the
$2n(n-1)$ edges which connect two internal vertices are internal,
and the remaining $4n$ edges are external.

\begin{figure}[h]\centering\psset{unit=7mm}\pspicture(-0.9,-0.1)(6.4,5.5)
\multips(0,1)(0,1){4}{\psline[linewidth=0.5pt](0,0)(5,0)}\multips(1,0)(1,0){4}{\psline[linewidth=0.5pt](0,0)(0,5)}
\multirput(1,0)(1,0){4}{$\scriptstyle\bullet$}\multirput(0,1)(1,0){6}{$\scriptstyle\bullet$}
\multirput(0,2)(1,0){6}{$\scriptstyle\bullet$}\multirput(0,3)(1,0){6}{$\scriptstyle\bullet$}
\multirput(0,4)(1,0){6}{$\scriptstyle\bullet$}\multirput(1,5)(1,0){4}{$\scriptstyle\bullet$}
\rput[b](1,5.1){$\scriptscriptstyle(0,1)$}\rput[b](4,5.1){$\scriptscriptstyle(0,n)$}
\rput[t](0.9,-0.1){$\scriptscriptstyle(n+1,1)$}\rput[t](4.1,-0.1){$\scriptscriptstyle(n+1,n)$}
\rput[r](-0.1,4){$\scriptscriptstyle(1,0)$}\rput[r](-0.1,1){$\scriptscriptstyle(n,0)$}
\rput[l](5.1,1){$\scriptscriptstyle(n,n+1)$}\rput[l](5.1,4){$\scriptscriptstyle(1,n+1)$}
\rput[br](0.99,4.09){$\scriptscriptstyle(\!1,1\!)$}\rput[bl](4.05,4.09){$\scriptscriptstyle(\!1,n\!)$}
\rput[tl](4.05,0.91){$\scriptscriptstyle(\!n,n\!)$}\rput[tr](0.99,0.91){$\scriptscriptstyle(\!n,1\!)$}
\rput(2.54,5.28){$\cdots$}\rput(-0.45,2.69){$\vdots$}\rput(2.54,-0.28){$\cdots$}\rput(5.45,2.69){$\vdots$}\endpspicture
\caption{The grid $\G$.}\label{grid1}\end{figure}
A configuration of the six-vertex model on $\G$ with DWBC is an assignment of arrows to the edges of
$\G$ such that the arrows on the external edges on the upper, right, lower and left boundaries of $\G$
are all directed upward, leftward, downward and rightward, respectively, while the arrows on the four edges incident to any
internal vertex satisfy the condition that two point towards and two point away from the vertex.

Now define $\SVDWBC(n)$ to be the set of all configurations of the six-vertex model on $\G$ with DWBC.
For example,
\psset{unit=4.53mm}
\begin{multline}\label{6VDBWC3}\SVDWBC(3)=\\
\left\{\raisebox{-7.7mm}{
\pspicture(0.4,0)(4.7,4.1)\multips(0.1,1)(0,1){3}{\psline[linewidth=0.5pt](0,0)(3.8,0)}\multips(1,0.1)(1,0){3}{\psline[linewidth=0.5pt](0,0)(0,3.8)}
\psdots[dotstyle=triangle*,dotscale=1,dotangle=0](1,3.5)(2,2.5)(2,3.5)(3,1.5)(3,2.5)(3,3.5)
\psdots[dotstyle=triangle*,dotscale=1,dotangle=180](1,0.5)(1,1.5)(1,2.5)(2,0.5)(2,1.5)(3,0.5)
\psdots[dotstyle=triangle*,dotscale=1,dotangle=90](1.5,3)(2.5,2)(2.5,3)(3.5,1)(3.5,2)(3.5,3)
\psdots[dotstyle=triangle*,dotscale=1,dotangle=-90](0.5,1)(0.5,2)(0.5,3)(1.5,1)(1.5,2)(2.5,1)
\multirput(1,0)(1,0){3}{$\scriptscriptstyle\bullet$}\multirput(0,1)(1,0){5}{$\scriptscriptstyle\bullet$}
\multirput(0,2)(1,0){5}{$\scriptscriptstyle\bullet$}\multirput(0,3)(1,0){5}{$\scriptscriptstyle\bullet$}
\multirput(1,4)(1,0){3}{$\scriptscriptstyle\bullet$}
\rput(4.2,0.7){,}\endpspicture
\pspicture(0.1,0)(4.7,4.1)\multips(0.1,1)(0,1){3}{\psline[linewidth=0.5pt](0,0)(3.8,0)}\multips(1,0.1)(1,0){3}{\psline[linewidth=0.5pt](0,0)(0,3.8)}
\psdots[dotstyle=triangle*,dotscale=1,dotangle=0](1,2.5)(1,3.5)(2,3.5)(3,1.5)(3,2.5)(3,3.5)
\psdots[dotstyle=triangle*,dotscale=1,dotangle=180](1,0.5)(1,1.5)(2,0.5)(2,1.5)(2,2.5)(3,0.5)
\psdots[dotstyle=triangle*,dotscale=1,dotangle=90](1.5,2)(2.5,2)(2.5,3)(3.5,1)(3.5,2)(3.5,3)
\psdots[dotstyle=triangle*,dotscale=1,dotangle=-90](0.5,1)(0.5,2)(0.5,3)(1.5,1)(1.5,3)(2.5,1)
\multirput(1,0)(1,0){3}{$\scriptscriptstyle\bullet$}\multirput(0,1)(1,0){5}{$\scriptscriptstyle\bullet$}
\multirput(0,2)(1,0){5}{$\scriptscriptstyle\bullet$}\multirput(0,3)(1,0){5}{$\scriptscriptstyle\bullet$}
\multirput(1,4)(1,0){3}{$\scriptscriptstyle\bullet$}
\rput(4.2,0.7){,}\endpspicture
\pspicture(0.1,0)(4.7,4.1)\multips(0.1,1)(0,1){3}{\psline[linewidth=0.5pt](0,0)(3.8,0)}\multips(1,0.1)(1,0){3}{\psline[linewidth=0.5pt](0,0)(0,3.8)}
\psdots[dotstyle=triangle*,dotscale=1,dotangle=0](1,3.5)(2,1.5)(2,2.5)(2,3.5)(3,2.5)(3,3.5)
\psdots[dotstyle=triangle*,dotscale=1,dotangle=180](1,0.5)(1,1.5)(1,2.5)(2,0.5)(3,0.5)(3,1.5)
\psdots[dotstyle=triangle*,dotscale=1,dotangle=90](1.5,3)(2.5,1)(2.5,3)(3.5,1)(3.5,2)(3.5,3)
\psdots[dotstyle=triangle*,dotscale=1,dotangle=-90](0.5,1)(0.5,2)(0.5,3)(1.5,1)(1.5,2)(2.5,2)
\multirput(1,0)(1,0){3}{$\scriptscriptstyle\bullet$}\multirput(0,1)(1,0){5}{$\scriptscriptstyle\bullet$}
\multirput(0,2)(1,0){5}{$\scriptscriptstyle\bullet$}\multirput(0,3)(1,0){5}{$\scriptscriptstyle\bullet$}
\multirput(1,4)(1,0){3}{$\scriptscriptstyle\bullet$}
\rput(4.2,0.7){,}\endpspicture
\pspicture(0.1,0)(4.7,4.1)\multips(0.1,1)(0,1){3}{\psline[linewidth=0.5pt](0,0)(3.8,0)}\multips(1,0.1)(1,0){3}{\psline[linewidth=0.5pt](0,0)(0,3.8)}
\psdots[dotstyle=triangle*,dotscale=1,dotangle=0](1,1.5)(1,2.5)(1,3.5)(2,3.5)(3,2.5)(3,3.5)
\psdots[dotstyle=triangle*,dotscale=1,dotangle=180](1,0.5)(2,0.5)(2,1.5)(2,2.5)(3,0.5)(3,1.5)
\psdots[dotstyle=triangle*,dotscale=1,dotangle=90](1.5,1)(2.5,1)(2.5,3)(3.5,1)(3.5,2)(3.5,3)
\psdots[dotstyle=triangle*,dotscale=1,dotangle=-90](0.5,1)(0.5,2)(0.5,3)(1.5,2)(1.5,3)(2.5,2)
\multirput(1,0)(1,0){3}{$\scriptscriptstyle\bullet$}\multirput(0,1)(1,0){5}{$\scriptscriptstyle\bullet$}
\multirput(0,2)(1,0){5}{$\scriptscriptstyle\bullet$}\multirput(0,3)(1,0){5}{$\scriptscriptstyle\bullet$}
\multirput(1,4)(1,0){3}{$\scriptscriptstyle\bullet$}
\rput(4.2,0.7){,}\endpspicture
\pspicture(0.1,0)(4.7,4.1)\multips(0.1,1)(0,1){3}{\psline[linewidth=0.5pt](0,0)(3.8,0)}\multips(1,0.1)(1,0){3}{\psline[linewidth=0.5pt](0,0)(0,3.8)}
\psdots[dotstyle=triangle*,dotscale=1,dotangle=0](1,2.5)(1,3.5)(2,1.5)(2,2.5)(2,3.5)(3,3.5)
\psdots[dotstyle=triangle*,dotscale=1,dotangle=180](1,0.5)(1,1.5)(2,0.5)(3,0.5)(3,1.5)(3,2.5)
\psdots[dotstyle=triangle*,dotscale=1,dotangle=90](1.5,2)(2.5,1)(2.5,2)(3.5,1)(3.5,2)(3.5,3)
\psdots[dotstyle=triangle*,dotscale=1,dotangle=-90](0.5,1)(0.5,2)(0.5,3)(1.5,1)(1.5,3)(2.5,3)
\multirput(1,0)(1,0){3}{$\scriptscriptstyle\bullet$}\multirput(0,1)(1,0){5}{$\scriptscriptstyle\bullet$}
\multirput(0,2)(1,0){5}{$\scriptscriptstyle\bullet$}\multirput(0,3)(1,0){5}{$\scriptscriptstyle\bullet$}
\multirput(1,4)(1,0){3}{$\scriptscriptstyle\bullet$}
\rput(4.2,0.7){,}\endpspicture
\pspicture(0.1,0)(4.7,4.1)\multips(0.1,1)(0,1){3}{\psline[linewidth=0.5pt](0,0)(3.8,0)}\multips(1,0.1)(1,0){3}{\psline[linewidth=0.5pt](0,0)(0,3.8)}
\psdots[dotstyle=triangle*,dotscale=1,dotangle=0](1,1.5)(1,2.5)(1,3.5)(2,2.5)(2,3.5)(3,3.5)
\psdots[dotstyle=triangle*,dotscale=1,dotangle=180](1,0.5)(2,0.5)(2,1.5)(3,0.5)(3,1.5)(3,2.5)
\psdots[dotstyle=triangle*,dotscale=1,dotangle=90](1.5,1)(2.5,1)(2.5,2)(3.5,1)(3.5,2)(3.5,3)
\psdots[dotstyle=triangle*,dotscale=1,dotangle=-90](0.5,1)(0.5,2)(0.5,3)(1.5,2)(1.5,3)(2.5,3)
\multirput(1,0)(1,0){3}{$\scriptscriptstyle\bullet$}\multirput(0,1)(1,0){5}{$\scriptscriptstyle\bullet$}
\multirput(0,2)(1,0){5}{$\scriptscriptstyle\bullet$}\multirput(0,3)(1,0){5}{$\scriptscriptstyle\bullet$}
\multirput(1,4)(1,0){3}{$\scriptscriptstyle\bullet$}
\rput(4.2,0.7){,}\endpspicture
\pspicture(0.1,0)(4,4.1)\multips(0.1,1)(0,1){3}{\psline[linewidth=0.5pt](0,0)(3.8,0)}\multips(1,0.1)(1,0){3}{\psline[linewidth=0.5pt](0,0)(0,3.8)}
\psdots[dotstyle=triangle*,dotscale=1,dotangle=0](1,2.5)(1,3.5)(2,1.5)(2,3.5)(3,2.5)(3,3.5)
\psdots[dotstyle=triangle*,dotscale=1,dotangle=180](1,0.5)(1,1.5)(2,0.5)(2,2.5)(3,0.5)(3,1.5)
\psdots[dotstyle=triangle*,dotscale=1,dotangle=90](1.5,2)(2.5,1)(2.5,3)(3.5,1)(3.5,2)(3.5,3)
\psdots[dotstyle=triangle*,dotscale=1,dotangle=-90](0.5,1)(0.5,2)(0.5,3)(1.5,1)(1.5,3)(2.5,2)
\multirput(1,0)(1,0){3}{$\scriptscriptstyle\bullet$}\multirput(0,1)(1,0){5}{$\scriptscriptstyle\bullet$}
\multirput(0,2)(1,0){5}{$\scriptscriptstyle\bullet$}\multirput(0,3)(1,0){5}{$\scriptscriptstyle\bullet$}
\multirput(1,4)(1,0){3}{$\scriptscriptstyle\bullet$}\endpspicture}
\right\}\!.\end{multline}

In an element of $\SVDWBC(n)$,
there are six possible configurations of arrows on the four edges incident to an internal vertex of $\G$.  These so-called
vertex configurations, are shown in Figure~\ref{vertices}, where the numbers 1, \ldots, 6 will be used to label the types, as indicated.

\begin{figure}[h]\centering\psset{unit=14mm}\pspicture(0.5,-0.3)(11.5,1)
\multips(1,0.5)(2,0){6}{\psline[linewidth=0.5pt](0,-0.5)(0,0.5)\psline[linewidth=0.5pt](-0.5,0)(0.5,0)}
\multirput(1,0.5)(2,0){6}{$\scriptstyle\bullet$}
\rput(1,-0.33){(1)}\psdots[dotstyle=triangle*,dotscale=1.5](1,0.75)\psdots[dotstyle=triangle*,dotscale=1.5](1,0.25)
\psdots[dotstyle=triangle*,dotscale=1.5,dotangle=-90](0.75,0.5)\psdots[dotstyle=triangle*,dotscale=1.5,dotangle=-90](1.25,0.5)
\rput(3,-0.33){(2)}\psdots[dotstyle=triangle*,dotscale=1.5,dotangle=180](3,0.75)\psdots[dotstyle=triangle*,dotscale=1.5,dotangle=180](3,0.25)
\psdots[dotstyle=triangle*,dotscale=1.5,dotangle=90](2.75,0.5)\psdots[dotstyle=triangle*,dotscale=1.5,dotangle=90](3.25,0.5)
\rput(5,-0.33){(3)}\psdots[dotstyle=triangle*,dotscale=1.5](5,0.75)\psdots[dotstyle=triangle*,dotscale=1.5](5,0.25)
\psdots[dotstyle=triangle*,dotscale=1.5,dotangle=90](4.75,0.5)\psdots[dotstyle=triangle*,dotscale=1.5,dotangle=90](5.25,0.5)
\rput(7,-0.33){(4)}\psdots[dotstyle=triangle*,dotscale=1.5,dotangle=180](7,0.75)\psdots[dotstyle=triangle*,dotscale=1.5,dotangle=180](7,0.25)
\psdots[dotstyle=triangle*,dotscale=1.5,dotangle=-90](6.75,0.5)\psdots[dotstyle=triangle*,dotscale=1.5,dotangle=-90](7.25,0.5)
\rput(9,-0.33){(5)}\psdots[dotstyle=triangle*,dotscale=1.5](9,0.75)\psdots[dotstyle=triangle*,dotscale=1.5,dotangle=180](9,0.25)
\psdots[dotstyle=triangle*,dotscale=1.5,dotangle=-90](8.75,0.5)\psdots[dotstyle=triangle*,dotscale=1.5,dotangle=90](9.25,0.5)
\rput(11,-0.33){(6)}\psdots[dotstyle=triangle*,dotscale=1.5,dotangle=180](11,0.75)\psdots[dotstyle=triangle*,dotscale=1.5](11,0.25)
\psdots[dotstyle=triangle*,dotscale=1.5,dotangle=90](10.75,0.5)\psdots[dotstyle=triangle*,dotscale=1.5,dotangle=-90](11.25,0.5)
\endpspicture\caption{The possible arrow configurations on edges incident to an internal vertex.}\label{vertices}\end{figure}
For $C\in\SVDWBC(n)$, let
$C_{ij}\in\{1,\ldots,6\}$ denote the type of vertex configuration in $C$ at internal vertex $(i,j)$ of $\G$,
and let $\N_{(k)}(C)$, $\N^{\row\,i}_{(k)}(C)$, $\N^{\col\,j}_{(k)}(C)$
and $\N^{ij}_{(k)}(C)$ denote the numbers of type-$k$ vertex configurations in~$C$
in the whole of $\G$, in row $i$ of $\G$, in column~$j$ of~$\G$,
or at vertex $(i,j)$ of $\G$, respectively.
Thus, for example, $\N^{ij}_{(k)}(C)=\delta_{C_{ij},k}$, and
$\N_{(k)}(C)=|\{(i,j)\mid 1\le i,j\le n,\;C_{ij}=k\}|=\sum_{i,j=1}^n\N^{ij}_{(k)}(C)$.

It can be shown easily that, for any $C\in\SVDWBC(n)$,
\begin{gather}\label{Nrel}\N_{(1)}(C)=\N_{(2)}(C),\quad\N_{(3)}(C)=\N_{(4)}(C),\quad
\N_{(5)}(C)=\N_{(6)}(C)+n,\\
\label{Nrelbound}\N^{\row\,1}_{(5)}(C)=\N^{\col\,n}_{(5)}(C)=
\N^{\row\,n}_{(5)}(C)=\N^{\col\,1}_{(5)}(C)=1.\end{gather}
(See, for example, Bressoud~\cite[p.~228]{Bre99},~\cite[p.~290]{Bre01} for a
discussion of the first two equations of~\eqref{Nrel}.)
It can also be seen that, for any $C\in\SVDWBC(n)$ and $1\le i,j\le n$,
\begin{equation}\label{Cbound}
C_{1j}\in\{1,3,5\},\quad C_{in}\in\{2,3,5\},\quad C_{nj}\in\{2,4,5\},\quad C_{i1}\in\{1,4,5\},\end{equation}
and therefore that
\begin{equation}\label{Ccorn}C_{11}\in\{1,5\},\quad C_{1n}\in\{3,5\},\quad C_{nn}\in\{2,5\},\quad C_{n1}\in\{4,5\}.\end{equation}

It can be shown straightforwardly that there is a natural bijection between $\ASM(n)$ and $\SVDWBC(n)$,
and that, for each $A\in\ASM(n)$ and $C\in\SVDWBC(n)$ which correspond under this bijection,
the statistics \eqref{numuA}--\eqref{rho} and~\eqref{genstat} satisfy
\begin{gather}\notag\nu(A)=\N_{(1)}(C)\;(=\N_{(2)}(C)),\quad\mu(A)=\N_{(6)}(C)\;(=\N_{(5)}(C)-n),\\
\notag\rT(A)=\N^{\row\,1}_{(1)}(C),\ \ \rR(A)=\N^{\col\,n}_{(2)}(C),\ \ \rB(A)=\N^{\row\,n}_{(2)}(C),\ \ \rL(A)=\N^{\col\,1}_{(1)}(C),\\
\notag\nu^{\row\,i}(A)=\N^{\row\,i}_{(1)}(C)+\N^{\row\,i}_{(2)}(C),\quad\nu^{\col\,j}(A)=\N^{\col\,j}_{(1)}(C)+\N^{\col\,j}_{(2)}(C),\\
\label{6VDWBCstat}\mu^{\row\,i}(A)=\N^{\row\,i}_{(6)}(C),\quad\mu^{\col\,j}(A)=\N^{\col\,j}_{(6)}(C).\end{gather}

The details of this bijection are as follows.
To map $A\in\ASM(n)$ to $C\in\SVDWBC(n)$,
for each $i=1,\ldots,n$ and $j=0,\ldots,n$,
assign a right or left arrow to the horizontal edge of~$\G$ between $(i,j)$ and $(i,j+1)$,
according to whether the partial row sum $\sum_{j'=1}^{j}A_{ij'}$ is~0 or 1, respectively.
Similarly, for each $i=0,\ldots,n$ and $j=1,\ldots,n$,
assign an upward or downward arrow to the vertical edge of~$\G$ between $(i,j)$ and $(i+1,j)$,
according to whether the partial column sum $\sum_{i'=1}^{i}A_{i'j}$ is 0 or 1, respectively.
To map $C\in\SVDWBC(n)$ to $A\in\ASM(n)$, let
\begin{equation}
A_{ij}=\begin{cases}1,&C_{ij}=5,\\
-1,&C_{ij}=6,\\
0,&C_{ij}\in\{1,2,3,4\}.\end{cases}
\end{equation}

The behaviour of the statistics~$\mu$,~$\rT$,~$\rR$,~$\rB$ and~$\rR$ under this bijection can be seen
easily. To obtain the behaviour of the statistics $\nu$, $\nu^{\row\,i}$ and $\nu^{\col\,j}$, note that,
for each $A\in\ASM(n)$ and $C\in\SVDWBC(n)$ which correspond under the bijection,
and each $1\le i,j\le n$, $\sum_{i'=1}^{i-1}\!A_{i'j}=\sum_{j'=1}^j\!A_{ij'}=0$
(or equivalently $\sum_{i'=1}^i\!A_{i'j}=\sum_{j'=1}^{j-1}\!A_{ij'}=0$)
if and only if $C_{ij}=1$, and
$\sum_{i'=1}^{i-1}\!A_{i'j}=\sum_{j'=1}^j\!A_{ij'}=1$
(or equivalently $\sum_{i'=1}^i\!A_{i'j}=\sum_{j'=1}^{j-1}\!A_{ij'}=1$)
if and only if $C_{ij}=2$.

As examples of this bijection, in~\eqref{ASM123} and~\eqref{6VDBWC3} the elements of $\ASM(3)$ and $\SVDWBC(3)$
are listed in an order for which respective elements correspond under the bijection.

\subsection{The partition function of the six-vertex model with DWBC}\label{partfunc}
In this section, the partition function of the six-vertex model with DWBC is introduced.  A relation between this
partition function, for certain assignments of its parameters, and the generalized
ASM generating function~\eqref{Z}, for certain assignments of its parameters,
is then derived using the bijection of Section~\ref{ASMbijsect}.

Let a weight $W_{(k)}(u,v)$ be associated with the vertex configuration of type~$k$, where $u$ and~$v$ are so-called spectral parameters.

The partition function for the case of the six-vertex model
of relevance here depends on these weights, and on spectral parameters~$u_i$ and~$v_j$
associated with row~$i$ and column~$j$ of~$\G$, for each $1\le i,j\le n$.  Specifically, this partition function is defined as
\begin{equation}\label{ZSVDWBC}Z^\SV_n(u_1,\ldots,u_n;v_1,\ldots,v_n)=
\sum_{C\in\SVDWBC(n)}\;\prod_{i,j=1}^n\,W_{(C_{ij})}(u_i,v_j).\end{equation}

Let the weights now satisfy
\begin{gather}\notag W_{(1)}(u,v)=W_{(2)}(u,v)=a(u,v),\quad W_{(3)}(u,v)=W_{(4)}(u,v)=b(u,v),\\
\label{Wabc}W_{(5)}(u,v)=W_{(6)}(u,v)=c(u,v),\end{gather}
for functions $a$, $b$ and $c$.

Taking the spectral parameters in~\eqref{ZSVDWBC} to be
\begin{equation}
\label{specpar}u_2=\ldots=u_{n-1}=r,\ \ v_2=\ldots=v_{n-1}=s,\ \ u_1=t_1,\ \ v_n=t_2,\ \ u_n=t_3,\ \ v_1=t_4,\end{equation}
for indeterminates $r$, $s$ and $t_1,\ldots,t_4$, and using~\eqref{Nrel}--\eqref{Ccorn}, gives
\begin{multline}\label{Z6VASM1}
Z^\SV_n(t_1,r,\ldots,r,t_3;t_4,s,\ldots,s,t_2)=
\sum_{C\in\SVDWBC(n)}\!\!a(r,s)^{2\N_{(1)}(C)}\,b(r,s)^{2\N_{(3)}(C)}\,c(r,s)^{2\N_{(6)}(C)+n}\;\times\\
\shoveright{\bigl(\tfrac{a(t_1,s)}{a(r,s)}\bigr)^{\N^{\row\,1}_{(1)}(C)-\N^{11}_{(1)}(C)}\,
\bigl(\tfrac{b(t_1,s)}{b(r,s)}\bigr)^{\N^{\row\,1}_{(3)}(C)-\N^{1n}_{(3)}(C)}\,
\bigl(\tfrac{c(t_1,s)}{c(r,s)}\bigr)^{1-\N^{11}_{(5)}(C)-\N^{1n}_{(5)}(C)}\;\times\qquad}\\
\shoveright{\bigl(\tfrac{a(r,t_2)}{a(r,s)}\bigr)^{\N^{\col\,n}_{(2)}(C)-\N^{nn}_{(2)}(C)}\,
\bigl(\tfrac{b(r,t_2)}{b(r,s)}\bigr)^{\N^{\col\,n}_{(3)}(C)-\N^{1n}_{(3)}(C)}\,
\shoveright{\bigl(\tfrac{c(r,t_2)}{c(r,s)}\bigr)^{1-\N^{1n}_{(5)}(C)-\N^{nn}_{(5)}(C)}\;\times\qquad}\\
\bigl(\tfrac{a(t_3,s)}{a(r,s)}\bigr)^{\N^{\row\,n}_{(2)}(C)-\N^{nn}_{(2)}(C)}\,
\bigl(\tfrac{b(t_3,s)}{b(r,s)}\bigr)^{\N^{\row\,n}_{(4)}(C)-\N^{n1}_{(4)}(C)}\,
\shoveright{\bigl(\tfrac{c(t_3,s)}{c(r,s)}\bigr)^{1-\N^{n1}_{(5)}(C)-\N^{nn}_{(5)}(C)}\;\times\qquad}\\
\bigl(\tfrac{a(r,t_4)}{a(r,s)}\bigr)^{\N^{\col\,1}_{(1)}(C)-\N^{11}_{(1)}(C)}\,
\bigl(\tfrac{b(r,t_4)}{b(r,s)}\bigr)^{\N^{\col\,1}_{(4)}(C)-\N^{n1}_{(4)}(C)}\,
\bigl(\tfrac{c(r,t_4)}{c(r,s)}\bigr)^{1-\N^{11}_{(5)}(C)-\N^{n1}_{(5)}(C)}\;\times\qquad}\\
\bigl(\tfrac{a(t_1,t_4)}{a(r,s)}\bigr)^{\N^{11}_{(1)}(C)}\,
\bigl(\tfrac{c(t_1,t_4)}{c(r,s)}\bigr)^{\N^{11}_{(5)}(C)}\,
\bigl(\tfrac{b(t_1,t_2)}{b(r,s)}\bigr)^{\N^{1n}_{(3)}(C)}\,
\bigl(\tfrac{c(t_1,t_2)}{c(r,s)}\bigr)^{\N^{1n}_{(5)}(C)}\;\times\\
\bigl(\tfrac{a(t_3,t_2)}{a(r,s)}\bigr)^{\N^{nn}_{(2)}(C)}\,
\bigl(\tfrac{c(t_3,t_2)}{c(r,s)}\bigr)^{\N^{nn}_{(5)}(C)}\,
\bigl(\tfrac{b(t_3,t_4)}{b(r,s)}\bigr)^{\N^{n1}_{(4)}(C)}\,
\bigl(\tfrac{c(t_3,t_4)}{c(r,s)}\bigr)^{\N^{n1}_{(5)}(C)}.
\end{multline}

It can be seen that
\begin{gather}\label{N1}\N_{(1)}(C)+\N_{(3)}(C)+\N_{(6)}(C)=\tfrac{n(n-1)}{2},\\
\notag\N^{\row\,1}_{(1)}(C)+\N^{\row\,1}_{(3)}(C)=\N^{\col\,n}_{(2)}(C)+\N^{\col\,n}_{(3)}(C)=\hspace*{67mm}\\[-0.5mm]
\label{N2}\hspace*{50mm}\N^{\row\,n}_{(2)}(C)+\N^{\row\,n}_{(4)}(C)=\N^{\col\,1}_{(1)}(C)+\N^{\col\,1}_{(4)}(C)=n-1,\\
\notag\N^{11}_{(1)}(C)+\N^{11}_{(5)}(C)=
\N^{1n}_{(3)}(C)+\N^{1n}_{(5)}(C)=\hspace*{80mm}\\[-0.8mm]
\label{N3}\hspace*{70mm}\N^{nn}_{(2)}(C)+\N^{nn}_{(5)}(C)=\N^{n1}_{(4)}(C)+\N^{n1}_{(5)}(C)=1,\end{gather}
where~\eqref{N1} follows from~\eqref{Nrel} and $\sum_{k=1}^6\N_{(k)}(C)$ $=n^2$,~\eqref{N2}
follows from~\eqref{Nrelbound}--\eqref{Cbound} and $\sum_{k=1}^6\N^{\row\,i}_{(k)}(C)=\sum_{k=1}^6\N^{\col\,j}_{(k)}(C)=n$,
and~\eqref{N3} follows from~\eqref{Ccorn}.

By using~\eqref{N1}--\eqref{N3} to eliminate $\N_{(3)}(C)$,
$\N^{\row\,1}_{(3)}(C)$, $\N^{\col\,n}_{(3)}(C)$, $\N^{\row\,n}_{(4)}(C)$,
$\N^{\col\,1}_{(4)}(C)$, $\N^{11}_{(5)}(C)$, $\N^{1n}_{(5)}(C)$,
$\N^{nn}_{(5)}(C)$ and $\N^{n1}_{(5)}(C)$ from~\eqref{Z6VASM1},
and then using the bijection of Section~\ref{ASMbijsect} between $\ASM(n)$ and~$\SVDWBC(n)$,
the behaviour~\eqref{6VDWBCstat} of the statistics \eqref{numuA}--\eqref{rho}
under this bijection, and the definition~\eqref{Z} of the generalized ASM generating function, it now follows
that~\eqref{Z6VASM1} can be written as
\begin{multline}\label{Z6VASM2}
Z^\SV_n(t_1,r,\ldots,r,t_3;t_4,s,\ldots,s,t_2)=\\
\shoveleft{\;\;b(r,s)^{n(n-1)}\,c(r,s)^n\,\bigl(\tfrac{b(t_1,s)\,b(r,t_2)\,b(t_3,s)\,b(r,t_4)}{b(r,s)^4}\bigr)^{n-1}\,
\tfrac{c(t_1,t_4)\,c(t_1,t_2)\,c(t_3,t_2)\,c(t_3,t_4)}{c(t_1,s)\,c(r,t_2)\,c(t_3,s)\,c(r,t_4)}\;\times}\\
\shoveleft{\;\;Z^\gen_n\Bigl(\!\bigl(\tfrac{a(r,s)}{b(r,s)}\bigr)^2,\bigl(\tfrac{c(r,s)}{b(r,s)}\bigr)^2;\:
\tfrac{a(t_1,s)\,b(r,s)}{a(r,s)\,b(t_1,s)},\,
\tfrac{a(r,t_2)\,b(r,s)}{a(r,s)\,b(r,t_2)},\,
\tfrac{a(t_3,s)\,b(r,s)}{a(r,s)\,b(t_3,s)},\,
\tfrac{a(r,t_4)\,b(r,s)}{a(r,s)\,b(r,t_4)};\:
\tfrac{a(r,s)\,a(t_1,t_4)\,c(t_1,s)\,c(r,t_4)}{a(t_1,s)\,a(r,t_4)\,c(r,s)\,c(t_1,t_4)},}\\
\tfrac{b(r,s)\,b(t_1,t_2)\,c(t_1,s)\,c(r,t_2)}{b(t_1,s)\,b(r,t_2)\,c(r,s)\,c(t_1,t_2)},\,
\tfrac{a(r,s)\,a(t_3,t_2)\,c(t_3,s)\,c(r,t_2)}{a(t_3,s)\,a(r,t_2)\,c(r,s)\,c(t_3,t_2)},\,
\tfrac{b(r,s)\,b(t_3,t_4)\,c(t_3,s)\,c(r,t_4)}{b(t_3,s)\,b(r,t_4)\,c(r,s)\,c(t_3,t_4)}\Bigr).\end{multline}

\subsection{The Izergin--Korepin determinant formula}\label{IK}
In this section, the Izergin--Korepin formula
for the partition function~\eqref{ZSVDWBC}, with certain assignments of the weights~\eqref{Wabc}, is stated.

It was found by Izergin~\cite[Eq.~(5)]{Ize87}, using certain results of Korepin~\cite{Kor82}, that if
the weights~\eqref{Wabc} satisfy the Yang--Baxter equation (see, for example, Baxter~\cite[pp.~187--189]{Bax82}),
then the partition function~\eqref{ZSVDWBC} is given by an explicit formula involving the determinant of a certain $n\times n$ matrix.

Let the functions $a$, $b$ and $c$ in~\eqref{Wabc} be given by
\begin{align}\notag a(u,v)&=u\,q^{1/2}-v\,q^{-1/2},\\
\notag b(u,v)&=v\,q^{1/2}-u\,q^{-1/2}=a(v,u),\\
\label{wuv}c(u,v)&=\bigl(q-q^{-1}\bigr)\,u^{1/2}\,v^{1/2},\end{align}
where $q$ is a further indeterminate, often known as the crossing parameter.
(Note that~$q$ will be present in many subsequent equations, even though it may not appear explicitly.)

It can be seen that these functions satisfy
\begin{equation}\label{int}\frac{a(u,v)^2+b(u,v)^2-c(u,v)^2}{a(u,v)\,b(u,v)}=-(q+q^{-1}).\end{equation}
The fact that the LHS of~\eqref{int} (which is often denoted as $2\Delta$) is independent of $u$ and~$v$ implies that
the six-vertex model weights given by~\eqref{Wabc} and~\eqref{wuv} satisfy the Yang--Baxter equation
(see, for example, Baxter~\cite[Eq.~(9.6.14)]{Bax82}).

The resulting Izergin--Korepin determinant formula is then given by the following result.
\begin{theorem*}[Izergin]
The partition function~\eqref{ZSVDWBC}, with weights given by~\eqref{Wabc} and~\eqref{wuv}, satisfies
\begin{equation}\label{Idet1}
Z^\SV_n(u_1,\ldots,u_n;v_1,\ldots,v_n)=
\frac{\prod_{i,j=1}^na(u_i,v_j)\,b(u_i,v_j)}{\prod_{1\le i<j\le n}(u_i-u_j)(v_j-v_i)}
\det_{1\le i,j\le n}\left(\frac{c(u_i,v_j)}{a(u_i,v_j)\,b(u_i,v_j)}\right).\end{equation}
\end{theorem*}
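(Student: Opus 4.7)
The plan is to follow Korepin's original strategy of characterizing $Z^{\SV}_n$ uniquely by a short list of properties, and then verifying that the right-hand side of \eqref{Idet1} enjoys the same properties.

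First, I would use the Yang--Baxter equation, which holds for the weights \eqref{Wabc}--\eqref{wuv} as a consequence of the fact that the LHS of \eqref{int} is independent of $u$ and $v$, to establish that $Z^\SV_n(u_1,\ldots,u_n;v_1,\ldots,v_n)$ is separately symmetric in the $u_i$'s and in the $v_j$'s. The standard argument inserts an auxiliary $R$-vertex between two adjacent rows (or columns) of $\G$ and uses the Yang--Baxter relation to push it from one side of the lattice to the other, exploiting the DWBC so that the $R$-vertex acts trivially at the boundary.

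Second, I would extract from \eqref{Wabc}--\eqref{wuv} the analytic structure of $Z^\SV_n$ in a single spectral parameter, say~$u_1$: after dividing out the half-integer powers in $u_1$ contributed by the $c$-weights in row~$1$, what remains is a polynomial in $u_1$ of degree at most $n-1$. Third, I would specialize $u_1 = q^{-1}v_1$, which makes $a(u_1,v_1)=0$; by \eqref{Ccorn} the vertex configuration at $(1,1)$ must be of type~$(1)$ or~$(5)$, so the vanishing of $a$ forces $C_{11}=5$, and a standard cascade along the first row and first column of $\G$ reduces $Z^\SV_n$ at this specialization to an explicit prefactor times $Z^\SV_{n-1}(u_2,\ldots,u_n;v_2,\ldots,v_n)$. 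Combined with the symmetry in the $v_j$'s, this yields the value of $Z^\SV_n$ at $n$ values of $u_1$, which, together with the degree bound and the trivial base case $Z^\SV_1(u;v)=c(u,v)$, determines $Z^\SV_n$ uniquely.

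Fourth, I would verify that the right-hand side of \eqref{Idet1} satisfies the same four properties. Symmetry is manifest from the antisymmetry of both the determinant numerator and the Vandermonde denominator. The degree bound in $u_1$ is checked directly after clearing denominators in the determinant. The recursion is the delicate step: at $u_1 = q^{-1}v_1$, the $(1,1)$-entry of the Izergin matrix acquires a simple pole from $a(u_1,v_1)$ in the denominator which exactly cancels the corresponding zero in the product prefactor, while the other entries of the first column remain regular, so expansion of the determinant along the first column isolates a single surviving term that must be shown to reproduce the cascade prefactor from the recursion. The main obstacle is this final bookkeeping: one must track the precise $q$-dependent scalar produced both by the chain of forced vertex configurations on the combinatorial side and by the residue extracted from the determinant on the Izergin--Korepin side, and verify that the two expressions agree identically so that the induction on $n$ closes.
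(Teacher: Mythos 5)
Your proposal follows essentially the same route as the paper, which likewise characterizes both sides of \eqref{Idet1} by symmetry in the spectral parameters (obtained for the partition function via the Yang--Baxter equation), a polynomial degree bound, a recursion at specializations $u_i=q^{\pm1}v_j$, and the $n=1$ initial condition, deferring the detailed verification to the cited references. Your variant --- factoring out $u_1^{1/2}$ so that the remainder is a polynomial of degree at most $n-1$ in $u_1$ and interpolating at the $n$ points $u_1=q^{-1}v_j$, rather than working with degree $2n-1$ in $u_1^{1/2}$ and all $2n$ specializations --- is an equally standard and correct packaging of the same argument.
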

This theorem can be proved by showing that each side of~\eqref{Idet1} satisfies, and is uniquely determined by, four particular properties.
Specifically, each side is symmetric in $u_1,\ldots,u_n$ and in $v_1,\ldots,v_n$ (which can be obtained for the LHS using the Yang--Baxter
equation, and is immediate for the RHS), is a polynomial of degree $2n-1$ in each of $u_1^{1/2},\ldots,u_n^{1/2},v_1^{1/2},\ldots,v_n^{1/2}$,
satisfies the same recursion relation for cases in which $u_i=q^{\pm1}v_j$ for some $i$ and $j$, and satisfies the
same initial condition at $n=1$.  For further details of this proof, see, for example,
Izergin, Coker and Korepin~\cite[Sec.~5]{IzeCokKor92},
Fonseca and Zinn-Justin~\cite[Sec.~B.1]{FonZin08},
Kuperberg~\cite[Sec.~1]{Kup96}, or
Zinn-Justin~\cite[Secs.~2.5.2--2.5.3]{Zin09}. For an alternative proof, see Bogoliubov, Pronko and Zvonarev~\cite[Sec.~4]{BogProZvo02}.

Note that, although the determinant and the denominator of the prefactor on the RHS of~\eqref{Idet1}
both vanish if $u_i=u_j$ or $v_i=v_j$ for some $i\ne j$, the RHS has a well-defined limit in these cases,
as a polynomial in $u_1^{1/2},\ldots,u_n^{1/2},v_1^{1/2},\ldots,v_n^{1/2}$.  Accordingly, it will be
valid to use~\eqref{Idet1} to derive properties of the partition function for the assignments~\eqref{specpar},
as will be done in Section~\ref{pf}.

\subsection{The Desnanot--Jacobi identity}\label{DesJacsect}
In this section, the Desnanot--Jacobi determinant identity is stated, and discussed briefly.

For a square matrix $M$, let $M_\mathrm{TL}$, $M_\mathrm{TR}$, $M_\mathrm{BR}$, $M_\mathrm{BL}$ and $M_\mathrm{C}$
be the submatrices of $M$ defined in Section~\ref{prev3}.
The Desnanot--Jacobi identity, in the form which will be used here, can be stated as follows.
\begin{theorem*}[Desnanot, Jacobi]
For any square matrix $M$,
\begin{equation}\label{DesJac}\det M\,\det M_\mathrm{C}=\det M_\mathrm{TL}\,\det M_\mathrm{BR}-\det M_\mathrm{TR}\,\det M_\mathrm{BL}.\end{equation}
\end{theorem*}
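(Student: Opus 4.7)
The plan is to deduce the Desnanot--Jacobi identity from Jacobi's classical theorem on complementary minors of the adjugate matrix. First, I would assume that $M$ is invertible, so that $M^{-1}=\mathrm{adj}(M)/\det M$ is well defined, and then remove this assumption at the end by a polynomial density argument.

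The key tool is Jacobi's complementary minor identity: for index sets $I,J\subseteq\{1,\ldots,n\}$ of equal size, the submatrix of $M^{-1}$ on rows $I$ and columns $J$ satisfies
\begin{equation*}
\det\bigl((M^{-1})[I\mid J]\bigr)=(-1)^{\sigma(I)+\sigma(J)}\,\frac{\det\bigl(M[I^c\mid J^c]\bigr)}{\det M},
\end{equation*}
where $\sigma(K)=\sum_{k\in K}k$. Taking $I=J=\{1,n\}$, the sign $(-1)^{2(1+n)}$ equals $1$, and the complementary submatrix $M[I^c\mid J^c]$ is precisely $M_\mathrm{C}$, so
\begin{equation*}
\det\bigl((M^{-1})[\{1,n\}\mid\{1,n\}]\bigr)=\frac{\det M_\mathrm{C}}{\det M}.
\end{equation*}

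Next, I would expand that $2\times 2$ determinant directly using the cofactor formula $(M^{-1})_{ij}=(-1)^{i+j}\det M^{(j,i)}/\det M$, where $M^{(j,i)}$ denotes $M$ with row $j$ and column $i$ removed. The four relevant entries correspond to the four corner $(n{-}1)\times(n{-}1)$ submatrices: $(M^{-1})_{11}=\det M_\mathrm{BR}/\det M$, $(M^{-1})_{nn}=\det M_\mathrm{TL}/\det M$, $(M^{-1})_{1n}=(-1)^{n+1}\det M_\mathrm{TR}/\det M$, and $(M^{-1})_{n1}=(-1)^{n+1}\det M_\mathrm{BL}/\det M$. The two $(-1)^{n+1}$ factors in the cross term multiply to $1$, giving
\begin{equation*}
\det\bigl((M^{-1})[\{1,n\}\mid\{1,n\}]\bigr)=\frac{\det M_\mathrm{TL}\,\det M_\mathrm{BR}-\det M_\mathrm{TR}\,\det M_\mathrm{BL}}{(\det M)^2}.
\end{equation*}
Equating the two expressions and clearing $\det M$ from the denominator yields the claimed identity, under the standing assumption that $\det M\neq 0$.

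Finally, to handle the general case: both sides of \eqref{DesJac} are polynomials in the entries of $M$, and they have just been shown to agree on the Zariski-open dense subset of invertible matrices. Hence they agree identically, completing the proof. The only potential obstacle in carrying this out is bookkeeping of the cofactor signs at the four corners, but as noted they conspire to produce a uniform sign, and no further calculation is needed since Jacobi's complementary minor identity itself is a standard consequence of multiplying $M$ by its adjugate and taking a $2\times 2$ minor.
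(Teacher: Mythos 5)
Your proof is correct. The paper itself offers no proof of the Desnanot--Jacobi identity, only citations; it does, however, note that the identity is the $m=2$ case of Jacobi's 1834 theorem on minors of the matrix of cofactors, and your argument --- applying Jacobi's complementary-minor identity to the $2\times2$ submatrix of $M^{-1}$ on rows and columns $\{1,n\}$, checking that the corner cofactor signs cancel, and then removing the invertibility hypothesis by the standard polynomial-density argument --- is precisely that derivation carried out in detail. The sign bookkeeping is right: $(-1)^{\sigma(I)+\sigma(J)}=(-1)^{2(n+1)}=1$ for $I=J=\{1,n\}$, and the two off-diagonal cofactor signs $(-1)^{n+1}$ multiply to $1$ in the cross term, so the identity comes out with the stated minus sign.
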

For proofs of~\eqref{DesJac} using various methods, see, for example,
Bressoud~\cite[Sec.~3.5]{Bre99}, Fulmek~\cite[Sec.~5.1]{Ful12}, Fulmek and Kleber~\cite{FulKle01}, or
Zeilberger~\cite{Zei97}.  
Cases of~\eqref{DesJac} for $n\times n$ matrices $M$ with small values of~$n$ were published by Desnanot in~1819
(see Muir~\cite[Eqs.~(A)--(G), (A$'$)--(G$'$), pp.~139--142]{Mui06}).  The further attribution to Jacobi is based on the fact
that, for arbitrary~$n$,~\eqref{DesJac} corresponds to the case $m=2$ of the identity,
published by Jacobi in~1834 (see Muir~\cite[Eq.~(XX.~4), p.~208]{Mui06}), that for any $n\times n$ matrix~$M$ and any $m\le n$, each
$m\times m$ minor of the matrix of $(n-1)\times(n-1)$ minors of~$M$ equals the complementary minor of~$M$ multiplied by $(\det M)^{m-1}$.
For proofs of the Jacobi identity using
various methods, see, for example, Brualdi and Schneider~\cite[Sec.~4]{BruSch83}, Knuth~\cite[Eq.~(3.16)]{Knu96},
Leclerc~\cite[Sec.~3.2]{Lec93}, Muir~\cite[Sec.~175]{Mui60}, or Turnbull~\cite[pp.~77--79]{Tur60}.
Some closely related identities will be given in~\eqref{PolPartBazin} and~\eqref{PolBazin}.

For a discussion of alternative forms of~\eqref{DesJac}, and of further determinant identities of which~\eqref{DesJac} is a special case,
see, for example, Behrend, Di Francesco and Zinn-Justin~\cite[Sec.~4]{BehDifZin13}.

\subsection{\protect Proof of Theorem~\ref{thm}}\label{pf}
In this section, the main steps in the proof of Theorem~\ref{thm} are given.  These involve using
the relation~\eqref{Z6VASM2} between the generalized ASM generating function and
the partition function of the six-vertex model with DWBC,
the Izergin--Korepin formula~\eqref{Idet1}, and
the Desnanot--Jacobi identity~\eqref{DesJac}.

By applying the Desnanot--Jacobi identity~\eqref{DesJac} to the matrix
$\Bigl(\frac{c(u_i,v_j)}{a(u_i,v_j)\,b(u_i,v_j)}\Bigr)_{1\le i,j\le n}$,
and then applying the Izergin--Korepin formula~\eqref{Idet1} to each of the six determinants which appear,
it follows that the partition function~\eqref{ZSVDWBC}, with weights given by~\eqref{Wabc} and~\eqref{wuv}, satisfies
\begin{multline}\label{DJIK}
(u_1-u_n)\,(v_n-v_1)\,Z^\SV_n(u_1,\ldots,u_n;v_1,\ldots,v_n)\,Z^\SV_{n-2}(u_2,\ldots,u_{n-1};v_2,\ldots,v_{n-1})=\\
a(u_1,v_n)\,b(u_1,v_n)\,a(u_n,v_1)\,b(u_n,v_1)\,
Z^\SV_{n-1}(u_1,\ldots,u_{n-1};v_1,\ldots,v_{n-1})\;\times\\
\shoveright{Z^\SV_{n-1}(u_2,\ldots,u_n;v_2,\ldots,v_n)}\\
-\,a(u_1,v_1)\,b(u_1,v_1)\,a(u_n,v_n)\,b(u_n,v_n)\,
Z^\SV_{n-1}(u_1,\ldots,u_{n-1};v_2,\ldots,v_n)\;\times\\
Z^\SV_{n-1}(u_2,\ldots,u_n;v_1,\ldots,v_{n-1}),\end{multline}
for any $u_1,\ldots,u_n,v_1,\ldots,v_n$.
In fact, it can be seen, from the previous derivation, that~\eqref{DJIK} is satisfied by any function which has the form of~\eqref{Idet1}, for
arbitrary functions $a$, $b$ and $c$.

Certain forms of the Desnanot--Jacobi identity have previously been combined
with certain cases of the Izergin--Korepin formula
by Korepin and Zinn-Justin~\cite[Sec.~3]{KorZin00} (see also Sogo~\cite[Sec.~4]{Sog93}),
and by Behrend, Di Francesco and Zinn-Justin~\cite[Sec.~5.4]{BehDifZin13}.

Now let parameters $x$, $y$, $z_1$, $z_2$, $z_3$, $z_4$, $z_{41}$, $z_{12}$, $z_{23}$, $z_{34}$, $z_{41}'$, $z_{12}'$, $z_{23}'$ and $z_{34}'$ be
given in terms of parameters~$q$,~$r$,~$s$,~$t_1$,~$t_2$,~$t_3$ and $t_4$ by
\begin{alignat}{4}\notag&&
x&=\bigl(\tfrac{a(r,s)}{b(r,s)}\bigr)^2,&\quad y&=\bigl(\tfrac{c(r,s)}{b(r,s)}\bigr)^2,\\
\notag z_1&=\tfrac{a(t_1,s)\,b(r,s)}{a(r,s)\,b(t_1,s)},&\quad
z_2&=\tfrac{a(r,t_2)\,b(r,s)}{a(r,s)\,b(r,t_2)},&\quad
z_3&=\tfrac{a(t_3,s)\,b(r,s)}{a(r,s)\,b(t_3,s)},&\quad
z_4&=\tfrac{a(r,t_4)\,b(r,s)}{a(r,s)\,b(r,t_4)},\\
\notag z_{41}&=\tfrac{a(r,s)\,a(t_1,t_4)}{a(t_1,s)\,a(r,t_4)},&\quad
z_{12}&=\tfrac{b(r,s)\,b(t_1,t_2)}{b(t_1,s)\,b(r,t_2)},&\quad
z_{23}&=\tfrac{a(r,s)\,a(t_3,t_2)}{a(t_3,s)\,a(r,t_2)},&\quad
z_{34}&=\tfrac{b(r,s)\,b(t_3,t_4)}{b(t_3,s)\,b(r,t_4)},\\
z_{41}'&=\tfrac{b(r,s)\,b(t_1,t_4)}{b(t_1,s)\,b(r,t_4)},&\quad
z_{12}'&=\tfrac{a(r,s)\,a(t_1,t_2)}{a(t_1,s)\,a(r,t_2)},&\quad
z_{23}'&=\tfrac{b(r,s)\,b(t_3,t_2)}{b(t_3,s)\,b(r,t_2)},&\quad
\label{zcorn}z_{34}'&=\tfrac{a(r,s)\,a(t_3,t_4)}{a(t_3,s)\,a(r,t_4)},\end{alignat}
where the functions $a$, $b$ and $c$ are given by~\eqref{wuv}.

The parameterizations~\eqref{zcorn} allow
arbitrary $x$, $y$ and $z_1,\ldots,z_4$ to be expressed in terms
of~$q$,~$r$,~$s$ and~$t_1,\ldots,t_4$.  In particular, it follows, using~\eqref{wuv},~\eqref{int} and~\eqref{zcorn},
that $q$ can first be taken to be a solution of
\begin{equation}x^{1/2}\,q^2+(x-y+1)\,q+x^{1/2}=0,\end{equation}
that $r$ and $s$ can then be taken to satisfy
\begin{equation}(x^{1/2}+q)\,r=(x^{1/2}\,q+1)\,s,\end{equation}
and that $t_1,\ldots,t_4$ are then given by
\begin{equation}
t_1=\tfrac{s(x^{1/2}\,z_1\,q+1)}{x^{1/2}\,z_1+q},\quad
t_2=\tfrac{r(x^{1/2}\,z_2+q)}{x^{1/2}\,z_2\,q+1},\quad
t_3=\tfrac{s(x^{1/2}\,z_3\,q+1)}{x^{1/2}\,z_3+q},\quad
t_4=\tfrac{r(x^{1/2}\,z_4+q)}{x^{1/2}\,z_4\,q+1}.\end{equation}

It can be checked easily that the parameterizations~\eqref{zcorn} imply that
$z_{41}$, $z_{12}$, $z_{23}$, $z_{34}$, $z_{41}'$, $z_{12}'$, $z_{23}'$ and $z_{34}'$
can be expressed in terms of $x$, $y$ and $z_1,\ldots,z_4$ as
\begin{equation}\label{Zcorn1}z_{41}=1\!-\!\tfrac{(z_4-1)(z_1-1)}{y\,z_4\,z_1},\ z_{12}=1\!-\!\tfrac{x(z_1-1)(z_2-1)}{y},\
z_{23}=1\!-\!\tfrac{(z_2-1)(z_3-1)}{y\,z_2\,z_3},\ z_{34}=1\!-\!\tfrac{x(z_3-1)(z_4-1)}{y},\end{equation}
and
\begin{equation}
\label{Zcorn2}z_{41}'=1\!-\!\tfrac{x(z_4-1)(z_1-1)}{y},\ z_{12}'=1\!-\!\tfrac{(z_1-1)(z_2-1)}{y\,z_1\,z_2},\
z_{23}'=1\!-\!\tfrac{x(z_2-1)(z_3-1)}{y},\ z_{34}'=1\!-\!\tfrac{(z_3-1)(z_4-1)}{y\,z_3\,z_4}.\end{equation}
Thus, each $z_{ij}$ and $z'_{ij}$ has the form $f(y,z_i,z_j)=1-\tfrac{(z_i-1)(z_j-1)}{y\,z_i\,z_j}$
or $f(\frac{y}{x},\frac{1}{z_i},\frac{1}{z_j})$.

The parameterizations~\eqref{zcorn} also give
\begin{equation}\label{tz}
(t_2-t_4)\,(t_1-t_3)=\tfrac{a(r,s)^2\,b(t_1,s)\,b(r,t_2)\,b(t_3,s)\,b(r,t_4)}{(b(r,s)\,c(r,s))^2}\,(z_4-z_2)\,(z_1-z_3).\end{equation}

Assigning the parameters in~\eqref{DJIK} according to~\eqref{specpar},
applying the relation~\eqref{Z6VASM2} to each of the six
cases of the partition function in~\eqref{DJIK}, and using~\eqref{zcorn} and~\eqref{tz}, it now follows that
\begin{multline}\label{ZZquad}
(z_4\!-\!z_2)\,(z_1\!-\!z_3)\,Z^\gen_n(x,y;z_1,z_2,z_3,z_4;z_{41},z_{12},z_{23},z_{34})\,Z^\gen_{n-2}(x,y;1,1,1,1;1,1,1,1)=\\
y\,z_1\,z_2\,z_3\,z_4\,\bigl(z_{12}\,z_{12}'\,z_{34}\,z_{34}'\,Z^\gen_{n-1}(x,y;z_1,1,1,z_4;z_{41},1,1,1)\;\times\\
\shoveright{Z^\gen_{n-1}(x,y;1,z_2,z_3,1;1,1,z_{23},1)}\\
-\,z_{41}\,z_{41}'\,z_{23}\,z_{23}'\,Z^\gen_{n-1}(x,y;z_1,z_2,1,1;1,z_{12},1,1)\;\times\\
Z^\gen_{n-1}(x,y;1,1,z_3,z_4;1,1,1,z_{34})\bigr),\end{multline}
where $x$, $y$ and $z_1,\ldots,z_4$ are arbitrary, while the remaining parameters are given by~\eqref{Zcorn1}--\eqref{Zcorn2}.

Finally, the required identity~\eqref{quadrel} follows
from~\eqref{ZZquad} by applying~\eqref{Zexp}, including the special cases~\eqref{Zexpadj}, to each of the
six cases of generalized ASM generating functions, and
using~\eqref{Zcorn1}--\eqref{Zcorn2} to eliminate $z_{41}$, $z_{12}$, $z_{23}$, $z_{34}$,~$z_{41}'$,~$z_{12}'$,~$z_{23}'$ and $z_{34}'$.

\subsection{\protect Alternative statement of some results of Sections~\ref{new1} and~\ref{new2}}\label{add0}
In this section, it is shown that some of the results of Sections~\ref{new1} and~\ref{new2} can be stated,
slightly more compactly, in terms of a certain function which will be defined
in~\eqref{Y} (and which has essentially already appeared in~\eqref{ZZquad}).
Also, an additional symmetry property for ASM generating functions is obtained,
and further quadratic relations satisfied by these functions are discussed briefly.

Define a function
\begin{multline}\label{Y}Y_n(x,y;z_1,z_2,z_3,z_4)=\sum_{A\in\ASM(n)}\!x^{\nu(A)}\,y^{\mu(A)+A_{11}+A_{1n}+A_{nn}+A_{n1}}\;\times\\
\qquad\quad z_1^{\rT(A)+A_{11}-1}\,z_2^{\rR(A)+A_{nn}-1}z_3^{\rB(A)+A_{nn}-1}\,z_4^{\rL(A)+A_{11}-1}\;\times\\
\qquad\qquad\qquad\quad(yz_4z_1\!-\!(z_4\!-\!1)(z_1\!-\!1))^{1-A_{11}}\,(y\!-\!x(z_1\!-\!1)(z_2\!-\!1))^{1-A_{1n}}\;\times\\
(yz_2z_3\!-\!(z_2\!-\!1)(z_3\!-\!1))^{1-A_{nn}}\,(y\!-\!x(z_3\!-\!1)(z_4\!-\!1))^{1-A_{n1}}.\end{multline}

It follows that $Y_n(x,y;z_1,z_2,z_3,z_4)$ is a polynomial in $x$, $y$ and $z_1,\ldots,z_4$, with integer coefficients.

Also define
\begin{equation}\label{Yadj}Y_n(x,y;z_1,z_2)=Y_n(x,y;z_1,z_2,1,1).\end{equation}

It can be seen, using the definition~\eqref{Z} of the generalized ASM generating function, that
\begin{align}\notag Y_n(x,y;z_1,z_2,z_3,z_4)&=
y^4Z^\gen_n\bigl(x,y;z_1,z_2,z_3,z_4;\\
\label{YZ}&\quad\,1\!-\!\tfrac{(z_4-1)(z_1-1)}{y\,z_4\,z_1},
1\!-\!\tfrac{x(z_1-1)(z_2-1)}{y},
1\!-\!\tfrac{(z_2-1)(z_3-1)}{y\,z_2\,z_3},
1\!-\!\tfrac{x(z_3-1)(z_4-1)}{y}\bigr),\end{align}
i.e., $Y_n(x,y;z_1,z_2,z_3,z_4)=y^4Z^\gen_n(x,y;z_1,z_2,z_3,z_4;z_{41},z_{12},z_{23},z_{34})$, with
$z_{41}$, $z_{12}$, $z_{23}$ and~$z_{34}$ given by~\eqref{Zcorn1}.

Therefore, by applying~\eqref{Zexp} to the RHS of~\eqref{YZ},
$Y_n(x,y;z_1,z_2,z_3,z_4)$ can be expressed in terms of quadruply-refined, adjacent-boundary doubly-refined
and unrefined ASM generating functions.

It can be seen immediately from~\eqref{Y}--\eqref{Yadj} that
\begin{align}\notag Y_n(x,y;z_1,1,z_2,1)&=y^4\,Z^\opp_n(x,y;z_1,z_2),\\
\notag Y_n(x,y;z,1)&=y^4\,Z_n(x,y;z),\\
\label{YZopp}Y_n(x,y;1,1)&=y^4\,Z_n(x,y),\end{align}
and by acting on $\ASM(n)$ in~\eqref{Y} with transposition or anticlockwise quarter-turn rotation and using~\eqref{TQ}
(or by applying~\eqref{Zrefrot} to~\eqref{YZ}), it follows that
\begin{align}\notag Y_n(x,y;z_1,z_2,z_3,z_4)&=Y_n(x,y;z_4,z_3,z_2,z_1)\\
&=x^{n(n-1)/2+4}\,(z_1z_2z_3z_4)^{n-1}\,
\label{Ysymm}Y_n\bigl(\tfrac{1}{x},\tfrac{y}{x};\tfrac{1}{z_2},\tfrac{1}{z_3},\tfrac{1}{z_4},\tfrac{1}{z_1}\bigr).
\end{align}

It can also be shown, using~\eqref{Y} and the properties of ASMs
in which a 1 on a boundary is in a corner or separated from a corner by a single zero
(as discussed in Section~\ref{stat}), that
\begin{equation}\label{Y0}Y_n(x,y;z_1,z_2,z_3,0)=
\bigl(1\!+\!\tfrac{x(z_1-1)}{y}\bigr)\,\bigl(1\!+\!\tfrac{x(z_3-1)}{y}\bigr)\,Y_{n-1}(x,y;z_1,z_2,z_3,1).\end{equation}

Taking $x$, $y$, $z_1,\ldots,z_4$, $z_{41}$, $z_{12}$, $z_{23}$ and $z_{34}$ to be parameterized by~\eqref{zcorn},
and using~\eqref{Z6VASM2},~\eqref{Zcorn1} and \eqref{YZ}, gives
\begin{multline}\label{YZ6V}b(r,s)^{(n-1)(n-4)+8}\,c(r,s)^{n-8}\,\bigl(b(t_1,s)\,b(r,t_2)\,b(t_3,s)\,b(r,t_4)\bigr)^{n-1}\,
t_1^{1/2}\,t_2^{1/2}\,t_3^{1/2}\,t_4^{1/2}\,(rs)^{-1}\;\times\\
\qquad Y_n\Bigl(\!\bigl(\tfrac{a(r,s)}{b(r,s)}\bigr)^2,\bigl(\tfrac{c(r,s)}{b(r,s)}\bigr)^2;\,
\tfrac{a(t_1,s)\,b(r,s)}{a(r,s)\,b(t_1,s)},
\tfrac{a(r,t_2)\,b(r,s)}{a(r,s)\,b(r,t_2)},
\tfrac{a(t_3,s)\,b(r,s)}{a(r,s)\,b(t_3,s)},
\tfrac{a(r,t_4)\,b(r,s)}{a(r,s)\,b(r,t_4)}\Bigr)=\\
Z^\SV_n(t_1,r,\ldots,r,t_3;t_4,s,\ldots,s,t_2),\end{multline}
where the weights in the partition function are given by~\eqref{Wabc},
and the functions $a$, $b$ and $c$ are given by~\eqref{wuv}.

It now follows from the symmetry of the partition function on the RHS of~\eqref{YZ6V}
in $t_1$ and~$t_3$, and in $t_2$ and $t_4$, as discussed after~\eqref{Idet1}, that
\begin{equation}\label{Yaddsymm}Y_n(x,y;z_1,z_2,z_3,z_4)=Y_n(x,y;z_3,z_2,z_1,z_4)=Y_n(x,y;z_1,z_4,z_3,z_2).\end{equation}
In contrast to~\eqref{Ysymm}, there does not seem to be a simple combinatorial derivation of~\eqref{Yaddsymm}.
(Note that the first equality of~\eqref{Ysymm} together with either equality of~\eqref{Yaddsymm} gives
the other equality of~\eqref{Yaddsymm}.)

It can be seen that~\eqref{Yaddsymm}, together with~\eqref{Zexp}, provides an identity (referred to
after~\eqref{Zadjsymm}) involving
$Z^\qua_n(x,y;z_1,z_2,z_3,z_4)$ and $Z^\qua_n(x,y;z_3,z_2,z_1,z_4)$ (or
$Z^\qua_n(x,y;z_1,z_2,z_3,z_4)$ and $Z^\qua_n(x,y;z_1,z_4,z_3,z_2)$).

It follows from~\eqref{ZZquad} and~\eqref{YZ}, together with \eqref{Zcorn1}--\eqref{Zcorn2},~\eqref{Yadj}
and~\eqref{YZopp}--\eqref{Ysymm}, that
\begin{multline}\label{Yquad}
y^7(z_4\!-\!z_2)\,(z_1\!-\!z_3)\,Y_n(x,y;z_1,z_2,z_3,z_4)\,Z_{n-2}(x,y)=\\
\bigl(x(z_1\!-\!1)(z_2\!-\!1)\!-\!y\bigr)
\bigl((z_1\!-\!1)(z_2\!-\!1)\!-\!yz_1z_2\bigr)
\bigl(x(z_3\!-\!1)(z_4\!-\!1)\!-\!y\bigr)\,\times\qquad\qquad\qquad\qquad\\
\shoveright{\bigl((z_3\!-\!1)(z_4\!-\!1)\!-\!yz_3z_4\bigr)\,Y_{n-1}(x,y;z_1,z_4)\,Y_{n-1}(x,y;z_2,z_3)\;-}\\
\bigl(x(z_4\!-\!1)(z_1\!-\!1)\!-\!y\bigr)
\bigl((z_4\!-\!1)(z_1\!-\!1)\!-\!yz_4z_1\bigr)
\bigl(x(z_2\!-\!1)(z_3\!-\!1)\!-\!y\bigr)\,\times\qquad\qquad\qquad\qquad\\
\bigl((z_2\!-\!1)(z_3\!-\!1)\!-\!yz_2z_3\bigr)\,Y_{n-1}(x,y;z_1,z_2)\,Y_{n-1}(x,y;z_3,z_4).\end{multline}
This can be regarded as a restatement of~\eqref{quadrel}, where~\eqref{Zexp} and~\eqref{YZ}
enable the conversion between~\eqref{quadrel} and~\eqref{Yquad}.

Setting $z_2=1$ in~\eqref{Yquad}, and relabelling $z_3$ as $z_2$ and $z_4$ as~$z_3$, gives
\begin{multline}\label{Ytrip}
y(z_1\!-\!z_2)\,(z_3\!-\!1)\,Y_n(x,y;z_1,1,z_2,z_3)\,Z_{n-2}(x,y)=\\
\shoveleft{\;\;
\bigl(x(z_2\!-\!1)(z_3\!-\!1)\!-\!y\bigr)\bigl((z_2\!-\!1)(z_3\!-\!1)\!-\!yz_2z_3\bigr)z_1\,Y_{n-1}(x,y;z_1,z_3)\,Z_{n-1}(x,y;z_2)\;-}\\
\bigl(x(z_3\!-\!1)(z_1\!-\!1)\!-\!y\bigr)\bigl((z_3\!-\!1)(z_1\!-\!1)\!-\!yz_3z_1\bigr)z_2\,Y_{n-1}(x,y;z_2,z_3)\,Z_{n-1}(x,y;z_1),\end{multline}
which can be regarded as a restatement of~\eqref{triprel}.

Setting $z_3=0$ in~\eqref{Yquad}, using~\eqref{Y0}, replacing $n$ by $n+1$, and relabelling
$z_3$ as $z_2$ and $z_4$ as~$z_3$ gives
\begin{multline}
y^2(z_1\!-\!z_2)\,z_3\,Y_n(x,y;z_1,1,z_2,z_3)\,Z_{n-1}(x,y)=\\
(z_1\!-\!1)\bigl(x(z_2\!-\!1)(z_3\!-\!1)\!-\!y\bigr)\bigl((z_2\!-\!1)(z_3\!-\!1)\!-\!yz_2z_3\bigr)\,Y_n(x,y;z_1,z_3)\,Z_{n-1}(x,y;z_2)\;-\qquad\\
(z_2\!-\!1)\bigl(x(z_1\!-\!1)(z_3\!-\!1)\!-\!y\bigr)\bigl((z_1\!-\!1)(z_3\!-\!1)\!-\!yz_1z_3\bigr)\,Y_n(x,y;z_2,z_3)\,Z_{n-1}(x,y;z_1),
\end{multline}
which can be regarded as a restatement of~\eqref{triprelalt}.

It can be seen that various other identities from Section~\ref{newres} involving quadruply-refined, triply-refined or adjacent-boundary doubly-refined
ASM generating functions could similarly be restated in terms of the functions~\eqref{Y}--\eqref{Yadj}.

Finally, note that some further quadratic identities satisfied by ASM generating functions can be obtained
from~\eqref{Yquad}, or by combining certain variations of the Desnanot--Jacobi identity with the Izergin--Korepin formula,
for certain assignments of the spectral parameters, and then using~\eqref{YZ6V}.

An example of such an identity is
\begin{multline}
(z_1\!-\!z_2)\,(z_3\!-\!z_4)\,Y_n(x,y;z_1,w_1,z_2,w_2)\,Y_n(x,y;z_3,w_1,z_4,w_2)\;-\\
(z_1\!-\!z_3)\,(z_2\!-\!z_4)\,Y_n(x,y;z_1,w_1,z_3,w_2)\,Y_n(x,y;z_2,w_1,z_4,w_2)\;+\quad\\
(z_1\!-\!z_4)\,(z_2\!-\!z_3)\,Y_n(x,y;z_1,w_1,z_4,w_2)\,Y_n(x,y;z_2,w_1,z_3,w_2)=0.
\end{multline}
This reduces to~\eqref{propeq2} for $w_1=w_2=1$, and can be obtained by applying~\eqref{Yquad} to
each of the six cases of $(z_i-z_j)Y_n(x,y;z_i,w_1,z_j,w_2)$ on the LHS,
and then checking that the overall expression on the LHS vanishes.

\subsection{\protect Derivations of \eqref{ff1} and \eqref{ff2}}\label{add3}
In this section, the explicit expressions~\eqref{ff1} and~\eqref{ff2} for the ASM generating functions
at $y=x+1$ are derived using a method different from that used in Section~\ref{prev3},
which instead involves the six-vertex model with DWBC.

Let the crossing parameter $q$, for the weights~\eqref{wuv}, be given by $q=\pm i$,
this corresponding to the so-called free fermion case of the six-vertex model.
It can be shown that, for this assignment of $q$, the partition function~\eqref{ZSVDWBC},
with weights given by~\eqref{Wabc} and~\eqref{wuv}, is explicitly
\begin{equation}\label{Zff}Z^\SV_n(u_1,\ldots,u_n;v_1,\ldots,v_n)\big|_{q=\pm i}=
\textstyle (\pm2)^n\,i^{n^2}\,\prod_{i=1}^nu_i^{1/2}v_i^{1/2}\;\prod_{1\le i<j\le n}(u_i+u_j)(v_i+v_j).\end{equation}
This result can be obtained by combining the Izergin--Korepin formula~\eqref{Idet1}
(in which $q=\pm i$ gives $a(u,v)b(u,v)=-(u^2+v^2)$) with
the Cauchy double alternant evaluation
\begin{equation}\label{Cau}\det_{1\le i,j\le n}\left(\frac{1}{\alpha_i+\beta_j}\right)=
\frac{\prod_{1\le i<j\le n}(\alpha_i-\alpha_j)(\beta_i-\beta_j)}{\prod_{1\le i,j\le n}(\alpha_i+\beta_j)}.\end{equation}

For previous appearances of~\eqref{Zff}, see, for example, Bogoliubov, Pronko and Zvonarev~\cite[Eq.~(58)]{BogProZvo02},
or Okada~\cite[Thm.~2.4(1), third eq.]{Oka06}, and for information regarding~\eqref{Cau}, see, for example,
Muir~\cite[p.~345]{Mui06}, \cite[Sec.~353]{Mui60}.

Assigning the spectral parameters in~\eqref{Zff} according to~\eqref{specpar},
applying the relation~\eqref{Z6VASM2}, and using the first ten parameterizations of~\eqref{zcorn},
which enable arbitrary $x$ and $z_1,\ldots,z_4$ to be obtained, with $y$, $z_{41}$, $z_{12}$, $z_{23}$ and $z_{34}$ then
being given by $y=x+1$ and~\eqref{Zcorn1}, it follows that
\begin{multline}\label{Zff1}
Z^\gen_n\bigl(x,x+1;z_1,z_2,z_3,z_4;
1\!-\!\tfrac{(z_4-1)(z_1-1)}{(x+1)\,z_4\,z_1},
1\!-\!\tfrac{x(z_1-1)(z_2-1)}{x+1},
1\!-\!\tfrac{(z_2-1)(z_3-1)}{(x+1)\,z_2\,z_3},
1\!-\!\tfrac{x(z_3-1)(z_4-1)}{x+1}\bigr)\\
=(xz_1z_3\!+\!1)(xz_2z_4\!+\!1)\bigl((xz_1\!+\!1)(xz_2\!+\!1)(xz_3\!+\!1)(xz_4\!+\!1)\bigr)^{n-2}\;\times\\
(x\!+\!1)^{(n-2)(n-3)/2-2n+3}.\end{multline}
(Using~\eqref{YZ}, the function in~\eqref{Zff1} could be written as $(x+1)^{-4}Y_n(x,x+1;z_1,z_2,z_3,z_4)$.)

The required expressions~\eqref{ff1}--\eqref{ff2} now follow from~\eqref{Zff1} by applying~\eqref{Zexp}--\eqref{Zexpadj},
and, in some cases, setting boundary parameters to~1.

\subsection{\protect Derivation of \eqref{ZmultASM}}\label{add1}
In this section, a derivation is given of the identity~\eqref{ZmultASM} satisfied by the
generating function~\eqref{ZrowASM} associated with several rows (or several columns) of ASMs.

For $0\le m\le n$ and $1\le k_1<\ldots<k_m\le n$, and indeterminates $r$,~$s$ and $t_1,\ldots,t_m$,
consider the partition
function $Z^\SV_n(r,\ldots,r,t_1,r,\ldots\ldots,r,t_m,r,\ldots,r;s,\ldots\ldots,s)$,
as given by~\eqref{ZSVDWBC}--\eqref{Wabc}, with arbitrary functions $a$, $b$ and $c$,
where
$t_i$ appears in position $k_i$ within $r,\ldots,r,t_1,r,\ldots\ldots,r,t_m,r,\ldots,r$.
Using the bijection of Section~\ref{ASMbijsect} between $\ASM(n)$ and $\SVDWBC(n)$,
and the behaviour~\eqref{6VDWBCstat} of the relevant statistics under this bijection, it can be checked
(by applying a process similar to that used in Section~\ref{ASMbijsect} for the derivation of~\eqref{Z6VASM2})
that this partition function is related to the ASM generating function of~\eqref{ZrowASM} by
\begin{multline}\label{6VmultASMmult}
Z^\SV_n(r,\ldots,r,t_1,r,\ldots\ldots,r,t_m,r,\ldots,r;s,\ldots\ldots,s)=\\
\shoveleft{\textstyle\qquad b(r,s)^{(n-1)(n-m)}\,c(r,s)^{n-m}\,\prod_{i=1}^mb(t_i,s)^{n-1}\,c(t_i,s)\;\times}\\
Z^{k_1,\ldots,k_m}_n\Bigl(\!\bigl(\tfrac{a(r,s)}{b(r,s)}\bigr)^2,\bigl(\tfrac{c(r,s)}{b(r,s)}\bigr)^2;\:
\tfrac{a(t_1,s)\,b(r,s)}{a(r,s)\,b(t_1,s)},\ldots,\tfrac{a(t_m,s)\,b(r,s)}{a(r,s)\,b(t_m,s)};\:
\bigl(\tfrac{c(t_1,s)}{b(t_1,s)}\bigr)^2,\ldots,\bigl(\tfrac{c(t_m,s)}{b(t_m,s)}\bigr)^2\Bigr).\end{multline}
(Note that the case of~\eqref{Z6VASM2} obtained by setting $t_2=t_4=s$, and then relabelling $t_3$ as $t_2$,
matches the case $m=2$, $k_1=1$ and $k_2=n$ of~\eqref{6VmultASMmult}.)

It follows from a result of Colomo and Pronko~\cite[Eq.~(6.8)]{ColPro08},~\cite[Eq.~(A.13)]{ColPro10}
(with a special case stated previously by Colomo and Pronko~\cite[Eq.~(5.8)]{ColPro06}) that
if the functions~$a$,~$b$ and~$c$ are given by~\eqref{wuv}, then, for $1\le m\le n$,
\begin{multline}\label{6Vmult}
\textstyle(a(r,s)\,b(r,s))^{m(m-1)/2}\:\prod_{1\le i<j\le m}\bigl(t_i-t_j\bigr)\;\times\\
\shoveleft{\;\;\textstyle
Z^\SV_n(r,\ldots,r,t_1,r,\ldots\ldots,r,t_m,r,\ldots,r;s,\ldots\ldots,s)\:\prod_{i=1}^{m-1}Z^\SV_{n-i}(r,\ldots,r;s,\ldots,s)}\\
=\;\det_{1\le i,j\le m}
\Bigl((a(t_i,s)\,b(t_i,s))^{j-1}\,(t_i-r)^{m-j}
\,Z^\SV_{n-j+1}(t_i,r,\ldots,r;s,\ldots\ldots,s)\Bigr).\end{multline}
Due to the symmetry of the partition function in each set of spectral parameters
(as discussed after~\eqref{Idet1}), the positions of $t_1,\ldots,t_m$
within $r,\ldots,r,t_1,r,\ldots\ldots,r,t_m,r,\ldots,r$ on the LHS,
and within $t_i,r,\ldots,r$ on the RHS, are immaterial in~\eqref{6Vmult}.
A result related to~\eqref{6Vmult}, in the case $q=e^{\pm 2\pi i/3}$
(where $q$ is the crossing parameter which appears in~\eqref{wuv}),
has been obtained by Ayyer and Romik~\cite[Thm.~6 \& App.~A]{AyyRom13}.

An alternative derivation of~\eqref{6Vmult} (in which a general identity for minors of a matrix is
combined with the Izergin--Korepin formula~\eqref{Idet1},
analogously to the derivation of~\eqref{DJIK}) will now be given, before continuing with the main
derivation of~\eqref{ZmultASM}.

It can be established that, for any $1\le m\le n$, any $(n+m-1)\times n$ matrix $M$,
any $1\le k_1<\ldots<k_m<l_1<\ldots<l_{m-1}\le n+m-1$, and any $1\le p_1<\ldots<p_{m-1}\le n$,
\begin{equation}\label{PolPartBazin}\textstyle
\det M^{l_1,\ldots,l_{m-1}}\;\prod_{i=1}^{m-1}\det M^{k_1,\ldots,k_m,l_1,\ldots,l_{i-1}}_{p_1,\ldots,p_i}=\displaystyle
\det_{1\le i,j\le m}\Bigl(\det M^{k_1,\ldots,k_{i-1},k_{i+1},\ldots,k_m,l_1,\ldots,l_{j-1}}_{p_1,\ldots,p_{j-1}}\Bigr),\end{equation}
where $M^{i_1,\ldots,i_r}_{j_1,\ldots,j_c}$ denotes the submatrix of $M$ in which rows $i_1,\ldots,i_r$ and columns
$j_1,\ldots,j_c$ have been deleted.  The identity~\eqref{PolPartBazin} is closely related to various identities for compound determinants,
as outlined, for example, by Leclerc~\cite[Sec.~3]{Lec93}.  In particular, using a certain polarization of Bazin's theorem,
as given by Leclerc~\cite[Prop.~3.4]{Lec93}, it follows that,
for any $1\le m\le n$, any $(n+2m-2)\times n$ matrix $N$, and any
$1\le k_1<\ldots<k_m<l_1<\ldots<l_{m-1}<q_1<\ldots<q_{m-1}\le n+2m-2$,
\begin{multline}\label{PolBazin}\textstyle
\det N^{l_1,\ldots,l_{m-1},q_1,\ldots,q_{m-1}}\;\prod_{i=1}^{m-1}\det N^{k_1,\ldots,k_m,l_1,\ldots,l_{i-1},q_{i+1},\ldots,q_{m-1}}=\\
\det_{1\le i,j\le m}\Bigl(\det N^{k_1,\ldots,k_{i-1},k_{i+1},\ldots,k_m,l_1,\ldots,l_{j-1},q_j,\ldots,q_{m-1}}\Bigr),\end{multline}
where the same notation as previously is used for the deletion of rows from a matrix.
The identity~\eqref{PolPartBazin} can be obtained from~\eqref{PolBazin} by appending $m-1$
rows to the bottom of an $(n+m-1)\times n$ matrix $M$ to form a matrix $N$,
where the $i$th appended row (i.e., row $n+m-1+i$ of $N$) contains a~$1$ in column $p_i$ and $0$'s elsewhere,
and then applying~\eqref{PolBazin} to~$N$, choosing $q_i=n+m-1+i$.

By applying~\eqref{PolPartBazin} to the matrix
$\Bigl(\frac{c(u_i,v_j)}{a(u_i,v_j)\,b(u_i,v_j)}\Bigr)_{1\le i\le n+m-1;\;1\le j\le n}$,
and then applying the Izergin--Korepin formula~\eqref{Idet1} to each of the minors which appear,
it follows that, for any $1\le m\le n$, any $1\le k_1<\ldots<k_m<l_1<\ldots<l_{m-1}\le n+m-1$, any $1\le p_1<\ldots<p_{m-1}\le n$,
and indeterminates $u_1,\ldots,u_{n+m-1}$ and $v_1,\ldots,v_n$,
the partition function~\eqref{ZSVDWBC}, with weights given by~\eqref{Wabc} and~\eqref{wuv}, satisfies
\begin{multline}\label{ZPolBazin}
\textstyle\prod_{1\le i\le j\le m-1}a(u_{l_j},v_{p_i})\,b(u_{l_j},v_{p_i})\;\prod_{1\le i<j\le m}\bigl(u_{k_i}-u_{k_j}\bigr)\;\times\\
\textstyle Z^\SV_n(u^{l_1,\ldots,l_{m-1}};v_1,\ldots,v_n)\:
\prod_{i=1}^{m-1}Z^\SV_{n-i}(u^{k_1,\ldots,k_m,l_1,\ldots,l_{i-1}};v^{p_1,\ldots,p_i})=\\
\det_{1\le i,j\le m}\Bigl(\textstyle\prod_{j'=1}^{j-1}a(u_{k_i},v_{p_{j'}})\,b(u_{k_i},v_{p_{j'}})\;
\prod_{j'=j}^{m-1}\bigl(u_{k_i}-u_{l_{j'}}\bigr)\;\times\\[-2.5mm]
Z^\SV_{n-j+1}(u^{k_1,\ldots,k_{i-1},k_{i+1},\ldots,k_m,l_1,\ldots,l_{j-1}};v^{p_1,\ldots,p_{j-1}})\Bigr),
\end{multline}
where $u^{i_1,\ldots,i_r}$ and $v^{j_1,\ldots,j_c}$ denote the subsequences of $u_1,\ldots,u_{n+m-1}$
and $v_1,\ldots,v_n$ in which $u_{i_1},\ldots,u_{i_r}$ and $v_{j_1},\ldots,v_{j_c}$ have been deleted.
The identity~\eqref{6Vmult} can now be obtained from~\eqref{ZPolBazin} by
considering $1\le k_1<\ldots<k_m\le n$, choosing arbitrary $1\le p_1<\ldots<p_{m-1}\le n$,
and setting~$l_i=n+i$, $u_i=r$ for $i\notin\{k_1,\ldots,k_m\}$, $u_{k_i}=t_i$, and $v_j=s$
(and also using the symmetry of the partition function to reassign positions of $t_i$ in the
partition functions in the determinant on the RHS).

It can be seen from the previous derivation that~\eqref{6Vmult} and~\eqref{ZPolBazin} are, in fact,
satisfied by any function which has the form of~\eqref{Idet1}, for
arbitrary functions $a$, $b$ and $c$.

The main derivation of~\eqref{ZmultASM} will now be completed.  Using~\eqref{6VmultASMmult} to express
each case of a partition function in~\eqref{6Vmult} as a case of the ASM generating function~\eqref{ZrowASM},
noting (as already done after~\eqref{mult1}) that
$Z^{k_1,\ldots,k_m}_n(x,y;z_1,\ldots,z_m;w_1,\ldots,w_m)$ is $Z_n(x,y)$ for $m=0$, or
$Z_n(x,y;z_1)$ for $m=1$ and $k_1=1$,
and then applying the definition~\eqref{Zmult}, gives
\begin{multline}\label{Zmultparam}Z^{k_1,\ldots,k_m}_n\Bigl(\!\bigl(\tfrac{a(r,s)}{b(r,s)}\bigr)^2,\bigl(\tfrac{c(r,s)}{b(r,s)}\bigr)^2;\:
\tfrac{a(t_1,s)\,b(r,s)}{a(r,s)\,b(t_1,s)},\ldots,\tfrac{a(t_m,s)\,b(r,s)}{a(r,s)\,b(t_m,s)};\:
\bigl(\tfrac{c(t_1,s)}{b(t_1,s)}\bigr)^2,\ldots,\bigl(\tfrac{c(t_m,s)}{b(t_m,s)}\bigr)^2\Bigr)=\\
X_n\Bigl(\!\bigl(\tfrac{a(r,s)}{b(r,s)}\bigr)^2,\bigl(\tfrac{c(r,s)}{b(r,s)}\bigr)^2;\:
\tfrac{a(t_1,s)\,b(r,s)}{a(r,s)\,b(t_1,s)},\ldots,\tfrac{a(t_m,s)\,b(r,s)}{a(r,s)\,b(t_m,s)}\Bigr).\end{multline}

The required result~\eqref{ZmultASM} now follows immediately from~\eqref{Zmultparam} by
parameterizing $x$, $y$, $z_1,\ldots,z_m$ and $w_1,\ldots,w_m$ in terms of $q$, $r$, $s$ and $t_1,\ldots,t_m$ as
$x=\bigl(\tfrac{a(r,s)}{b(r,s)}\bigr)^2$, $y=\bigl(\tfrac{c(r,s)}{b(r,s)}\bigr)^2$,
$z_i=\tfrac{a(t_i,s)\,b(r,s)}{a(r,s)\,b(t_i,s)}$ and $w_i=\bigl(\tfrac{c(t_i,s)}{b(t_i,s)}\bigr)^2$, and
observing that this parameterization enables arbitrary $x$, $y$ and $z_1,\ldots,z_m$ to be obtained,
with $w_1,\ldots,w_m$ then being given by $w_i=xz_i^2+(y-x-1)z_i+1$.

\subsection{\protect Derivations of \eqref{unwquad} and \eqref{schurgen}}\label{add2}
In this section, an expression for the partition function of the six-vertex model with DWBC at a certain value
of its crossing parameter is given.  This result, together with certain other previously-stated results, is then used
to obtain derivations of the identity~\eqref{unwquad} satisfied by the alternative quadruply-refined ASM generating function at $x=y=1$,
and of the identity~\eqref{schurgen} satisfied by the function~\eqref{Zmult} at $x=y=1$.

Let the crossing parameter $q$, for the weights~\eqref{wuv}, be given by $q=e^{\pm 2\pi i/3}$,
this corresponding to the so-called combinatorial point of the six-vertex model.
It can be shown that, for this assignment of $q$, the partition function~\eqref{ZSVDWBC},
with weights given by~\eqref{Wabc} and~\eqref{wuv}, can be expressed as
\begin{multline}\label{Zcomb}Z^\SV_n(u_1,\ldots,u_n;u_{n+1},\ldots,u_{2n})\big|_{q=e^{\pm 2\pi i/3}}=\\
(\pm1)^n\,i^{n^2}\,3^{n/2}\,u_1^{1/2}\!\ldots u_{2n}^{1/2}\;s_{(n-1,n-1,\ldots,2,2,1,1)}(u_1,\ldots,u_{2n}),\end{multline}
where $s_{(n-1,n-1,\ldots,2,2,1,1)}(u_1,\ldots,u_{2n})$ is the Schur function indexed by the double-staircase
partition $(n-1,n-1,\ldots,2,2,1,1)$, evaluated at the spectral parameters $u_1,\ldots,u_{2n}$.

An important consequence of~\eqref{Zcomb} is that $Z^\SV_n(u_1,\ldots,u_n;u_{n+1},\ldots,u_{2n})|_{q=e^{\pm 2\pi i/3}}$ is
symmetric in all spectral parameters $u_1,\ldots,u_{2n}$.

The result~\eqref{Zcomb} was first obtained by Okada~\cite[Thm.~2.4(1), second equation]{Oka06}, and Stroganov~\cite[Eq.~(17)]{Str06}.
For further related information, derivations and results, see, for example, Aval~\cite{Ava09},
Fonseca and Zinn-Justin~\cite[Sec.~B.2]{FonZin08},
Lascoux~\cite[p.~4]{Las07b},
Razumov and Stroganov~\cite[Sec.~2]{RazStr04b},
or Zinn-Justin~\cite[Sec.~2.5.6]{Zin09}.

Proceeding to the derivation of~\eqref{unwquad}, using the symmetry of the LHS of~\eqref{Zcomb} in all of its spectral parameters, it follows that
\begin{multline}\label{unwsym}Z^\SV_n(t_1,r,\ldots,r,t_3;t_4,r,\ldots,r,t_2)\big|_{q=e^{\pm 2\pi i/3}}=\\
Z^\SV_n(t_1,t_2,t_3,t_4,r,\ldots,r;r,\ldots\ldots,r)\big|_{q=e^{\pm 2\pi i/3}}.\end{multline}
Applying~\eqref{Z6VASM2} and~\eqref{6VmultASMmult} (with $r=s=1$, $m=4$ and $k_i=i$) to the LHS and RHS, respectively, of~\eqref{unwsym},
and then using \eqref{wuv} (with $q=e^{\pm 2\pi i/3}$) for the functions $a$, $b$ and $c$ (and noting that $a(1,1)=b(1,1)$
and $b(1,1)^2=c(1,1)^2$), gives
\begin{multline}\label{Z1234t}\bigl(\tfrac{b(1,t_2)\,b(1,t_4)}{b(t_2,1)\,b(t_4,1)}\bigr)^{n-1}
Z^\gen_n\Bigl(1,1;\,\tfrac{a(t_1,1)}{b(t_1,1)},\tfrac{a(1,t_2)}{b(1,t_2)},\tfrac{a(t_3,1)}{b(t_3,1)},\tfrac{a(1,t_4)}{b(1,t_4)};\\[-1.5mm]
\shoveright{\tfrac{a(1,1)\,a(t_1,t_4)}{a(t_1,1)\,a(1,t_4)},
\tfrac{b(1,1)\,b(t_1,t_2)}{b(t_1,1)\,b(1,t_2)},
\tfrac{a(1,1)\,a(t_3,t_2)}{a(t_3,1)\,a(1,t_2)},
\tfrac{b(1,1)\,b(t_3,t_4)}{b(t_3,1)\,b(1,t_4)}\Bigr)\Big|_{q=e^{\pm 2\pi i/3}}=}\\[2mm]
Z^{1,2,3,4}_n\Bigl(1,1;\,
\tfrac{a(t_1,1)}{b(t_1,1)},\tfrac{a(t_2,1)}{b(t_2,1)},\tfrac{a(t_3,1)}{b(t_3,1)},\tfrac{a(t_4,1)}{b(t_4,1)};\,
\bigl(\tfrac{c(t_1,1)}{b(t_1,1)}\bigr)^2\!,\bigl(\tfrac{c(t_2,1)}{b(t_2,1)}\bigr)^2\!,
\bigl(\tfrac{c(t_3,1)}{b(t_3,1)}\bigr)^2\!,\bigl(\tfrac{c(t_4,1)}{b(t_4,1)}\bigr)^2\Bigr)\Big|_{q=e^{\pm 2\pi i/3}}.\end{multline}
It follows from~\eqref{Z1234t}, by applying~\eqref{Zmultparam} to the RHS, and then setting
$t_i=\tfrac{qz_i+1}{z_i+q}$, or equivalently $z_i=\tfrac{qt_i-1}{q-t_i}=\tfrac{a(t_i,1)}{b(t_i,1)}$,
for each $1\le i\le4$, that
\begin{multline}\label{Z1234z}(z_2z_4)^{n-1}
Z^\gen_n\Bigl(1,1;z_1,\tfrac{1}{z_2},z_3,\tfrac{1}{z_4};
1\!+\!\tfrac{(z_4-1)(z_1-1)}{z_1},1\!+\!\tfrac{(z_1-1)(z_2-1)}{z_2},1\!+\!\tfrac{(z_2-1)(z_3-1)}{z_3},1\!+\!\tfrac{(z_3-1)(z_4-1)}{z_4}\Bigr)\\
=X_n(1,1;z_1,z_2,z_3,z_4).\end{multline}
The required result~\eqref{unwquad} now follows from~\eqref{Z1234z} by applying~\eqref{Zexp} to the LHS, and using the
first definition of~\eqref{Zaltdef}, the definition~\eqref{Zmult} and the identity~\eqref{Zadj}.

Proceeding to the derivation of~\eqref{schurgen}, setting $r=s=1$, $q=e^{\pm 2\pi i/3}$ and $k_i=i$ in~\eqref{6Vmult}, and then
applying~\eqref{Zcomb} to the partition function
$Z^\SV_n(t_1,\ldots,t_m,1,\ldots,1;1,\ldots,1)$, applying~\eqref{6VmultASMmult} (with $m=0$ or $m=1$) to the remaining cases of partition functions,
and using~\eqref{Zmult} and~\eqref{wuv}, gives
\begin{multline}\label{schurder}s_{(n-1,n-1,\ldots,2,2,1,1)}(t_1,\ldots,t_m,\,\underbrace{1,\ldots,1}_{2n-m}\,)=\\
3^{n(n-1)/2}\,b(1,1)^{-m(n-1)}\,(b(t_1,1)\ldots b(t_m,1))^{n-1}\,
X_n\bigl(1,1;\:\tfrac{a(t_1,1)}{b(t_1,1)},\ldots,\tfrac{a(t_m,1)}{b(t_m,1)}\bigr)\big|_{q=e^{\pm 2\pi i/3}}.\end{multline}
(Alternatively, this can be obtained by combining~\eqref{wuv},~\eqref{6VmultASMmult},~\eqref{Zmultparam} and~\eqref{Zcomb}.)

The required result~\eqref{schurgen} now follows from~\eqref{schurder}
by setting $t_i=\tfrac{qz_i+1}{z_i+q}$, or equivalently $z_i=\tfrac{qt_i-1}{q-t_i}=\tfrac{a(t_i,1)}{b(t_i,1)}$,
for each $1\le i\le m$.

\let\oldurl\url
\makeatletter
\renewcommand*\url{%
        \begingroup
        \let\do\@makeother
        \dospecials
        \catcode`{1
        \catcode`}2
        \catcode`\ 10
        \url@aux
}
\newcommand*\url@aux[1]{%
        \setbox0\hbox{\oldurl{#1}}%
        \ifdim\wd0>\linewidth
                \strut
                \\
                \vbox{%
                        \hsize=\linewidth
                        \kern-\lineskip
                        \raggedright
                        \strut\oldurl{#1}%
                }%
        \else
                \hskip0pt plus\linewidth
                \penalty0
                \box0
        \fi
        \endgroup
}
\makeatother
\gdef\MRshorten#1 #2MRend{#1}%
\gdef\MRfirsttwo#1#2{\if#1M%
MR\else MR#1#2\fi}
\def\MRfix#1{\MRshorten\MRfirsttwo#1 MRend}
\renewcommand\MR[1]{\relax\ifhmode\unskip\spacefactor3000 \space\fi
  \MRhref{\MRfix{#1}}{{\tiny \MRfix{#1}}}}
\renewcommand{\MRhref}[2]{%
 \href{http://www.ams.org/mathscinet-getitem?mr=#1}{#2}}
\bibliography{Bibliography}

\providecommand{\bysame}{\leavevmode\hbox to3em{\hrulefill}\thinspace}
\begin{thebibliography}{100}

\bibitem{Abe08}
F.~Abeles, \emph{Dodgson condensation: the historical and mathematical
  development of an experimental method}, Linear Algebra Appl. \textbf{429}
  (2008), no.~2--3, 429–--438,
  \href{http://dx.doi.org/10.1016/j.laa.2007.11.022}{\path{doi}}. \MR{2419937
  (}

\bibitem{And79}
G.~Andrews, \emph{Plane partitions. {III}. {T}he weak {M}acdonald conjecture},
  Invent. Math. \textbf{53} (1979), no.~3, 193--225,
  \href{http://dx.doi.org/10.1007/BF01389763}{\path{doi}}. \MR{549398 (8}

\bibitem{And80}
\bysame, \emph{Macdonald's conjecture and descending plane partitions},
  Combinatorics, representation theory and statistical methods in groups,
  Lecture Notes in Pure and Appl. Math., vol.~57, Dekker, New York, 1980,
  pp.~91--106. \MR{588196 (8}

\bibitem{And94}
\bysame, \emph{Plane partitions. {V}. {T}he {TSSCPP} conjecture}, J. Combin.
  Theory Ser. A \textbf{66} (1994), no.~1, 28--39,
  \href{http://dx.doi.org/10.1016/0097-3165(94)90048-5}{\path{doi}}.
  \MR{1273289 (}

\bibitem{AndSta98}
G.~Andrews and D.~Stanton, \emph{Determinants in plane partition enumeration},
  European J. Combin. \textbf{19} (1998), no.~3, 273--282,
  \href{http://dx.doi.org/10.1006/eujc.1997.0184}{\path{doi}}. \MR{1621001 (}

\bibitem{Ava08}
J.-C. Aval, \emph{Keys and alternating sign matrices}, S\'em. Lothar. Combin.
  \textbf{59} (2007/10), Art.\ B59f, 13 pp.\ (electronic),
  \href{http://arxiv.org/abs/0711.2150}{\path{arXiv:0711.2150}}. \MR{2465402 (}

\bibitem{Ava09}
\bysame, \emph{On the symmetry of the partition function of some square ice
  models}, Theoret. and Math. Phys. \textbf{161} (2009), no.~3, 1582--1589,
  \href{http://arxiv.org/abs/0903.0777}{\path{arXiv:0903.0777}},
  \href{http://dx.doi.org/10.1007/s11232-009-0146-8}{\path{doi}}. \MR{2642191
  (}

\bibitem{AvaDuc09}
J.-C. Aval and P.~Duchon, \emph{Enumeration of alternating sign matrices of
  even size (quasi)-invariant under a quarter-turn rotation}, 21st
  {I}nternational {C}onference on {F}ormal {P}ower {S}eries and {A}lgebraic
  {C}ombinatorics ({FPSAC} 2009), Discrete Math. Theor. Comput. Sci. Proc., AK,
  Assoc. Discrete Math. Theor. Comput. Sci., Nancy, 2009, pp.~115--126,
  \href{http://arxiv.org/abs/0906.3445}{\path{arXiv:0906.3445}}. \MR{2721506 (}

\bibitem{AvaDuc10}
\bysame, \emph{Enumeration of alternating sign matrices of even size
  (quasi-)invariant under a quarter-turn rotation}, Electron. J. Combin.
  \textbf{17} (2010), Research Paper 51, 20 pp.\ (electronic),
  \href{http://arxiv.org/abs/0910.3047}{\path{arXiv:0910.3047}}. \MR{2607337 (}

\bibitem{Ayy10}
A.~Ayyer, \emph{A natural bijection between permutations and a family of
  descending plane partitions}, European J. Combin. \textbf{31} (2010), no.~7,
  1785--1791, \href{http://arxiv.org/abs/0909.4732}{\path{arXiv:0909.4732}},
  \href{http://dx.doi.org/10.1016/j.ejc.2010.02.003}{\path{doi}}. \MR{2673018
  (}

\bibitem{AyyCorGou11}
A.~Ayyer, R.~Cori, and D.~Gouyou-Beauchamps, \emph{Monotone triangles and 312
  pattern avoidance}, Electron. J. Combin. \textbf{18} (2011), no.~2, Paper 26,
  22 pp.\ (electronic),
  \href{http://arxiv.org/abs/1101.1666}{\path{arXiv:1101.1666}}. \MR{2861418}

\bibitem{AyyRom13}
A.~Ayyer and D.~Romik, \emph{New enumeration formulas for alternating sign
  matrices and square ice partition functions}, Adv. Math. \textbf{235} (2013),
  161--186, \href{http://arxiv.org/abs/1202.3651}{\path{arXiv:1202.3651}},
  \href{http://dx.doi.org/10.1016/j.aim.2012.11.006}{\path{doi}}. \MR{3010055}

\bibitem{BatDegNie01}
M.~T. Batchelor, J.~de~Gier, and B.~Nienhuis, \emph{The quantum symmetric
  {$XXZ$} chain at {$\Delta=-\frac 12$}, alternating-sign matrices and plane
  partitions}, J. Phys. A \textbf{34} (2001), no.~19, L265--L270,
  \href{http://arxiv.org/abs/cond-mat/0101385}{\path{arXiv:cond-mat/0101385}},
  \href{http://dx.doi.org/10.1088/0305-4470/34/19/101}{\path{doi}}. \MR{1836155
  (}

\bibitem{Bax82}
R.~Baxter, \emph{Exactly solved models in statistical mechanics}, Academic
  Press Inc. [Harcourt Brace Jovanovich Publishers], London, 1989, Reprint of
  the 1982 original. \MR{998375 (9}

\bibitem{Beh08}
R.~Behrend, \emph{Osculating paths and oscillating tableaux}, Electron. J.
  Combin. \textbf{15} (2008), Research Paper 7, 60 pp.\ (electronic),
  \href{http://arxiv.org/abs/math/0701755}{\path{arXiv:math/0701755}}.
  \MR{2368912 (}

\bibitem{BehDifZin12}
R.~Behrend, P.~Di~Francesco, and P.~Zinn-Justin, \emph{On the weighted
  enumeration of alternating sign matrices and descending plane partitions}, J.
  Combin. Theory Ser. A \textbf{119} (2012), no.~2, 331--363,
  \href{http://arxiv.org/abs/1103.1176}{\path{arXiv:1103.1176}},
  \href{http://dx.doi.org/10.1016/j.jcta.2011.09.004}{\path{doi}}. \MR{2860598}

\bibitem{BehDifZin13}
\bysame, \emph{A doubly-refined enumeration of alternating sign matrices and
  descending plane partitions}, J. Combin. Theory Ser. A \textbf{120} (2013),
  no.~2, 409--432,
  \href{http://arxiv.org/abs/1202.1520}{\path{arXiv:1202.1520}},
  \href{http://dx.doi.org/10.1016/j.jcta.2012.09.004}{\path{doi}}. \MR{2995049}

\bibitem{BiaCanSpo12}
P.~Biane, L.~Cantini, and A.~Sportiello, \emph{Doubly-refined enumeration of
  alternating sign matrices and determinants of 2-staircase {S}chur functions},
  S\'em. Lothar. Combin. \textbf{65} (2010/12), Art.\ B65f, 25 pp.\
  (electronic), \href{http://arxiv.org/abs/1101.3427}{\path{arXiv:1101.3427}}.
  \MR{2971008}

\bibitem{BiaChe13}
P.~Biane and H.~Cheballah, \emph{Gog, {M}agog and {S}ch{\"u}tzenberger {II}:
  {L}eft trapezoids}, 25th {I}nternational {C}onference on {F}ormal {P}ower
  {S}eries and {A}lgebraic {C}ombinatorics ({FPSAC} 2013), Discrete Math.
  Theor. Comput. Sci. Proc., AS, Assoc. Discrete Math. Theor. Comput. Sci.,
  Nancy, 2013, pp.~349--360,
  \href{http://arxiv.org/abs/1209.4799}{\path{arXiv:1209.4799}}.

\bibitem{BogKitZvo02}
N.~Bogoliubov, A.~Kitaev, and M.~Zvonarev, \emph{Boundary polarization in the
  six-vertex model}, Phys.~Rev.~E \textbf{65} (2002), no.~2, 026126,
  \href{http://arxiv.org/abs/cond-mat/0107146}{\path{arXiv:cond-mat/0107146}},
  \href{http://dx.doi.org/10.1103/PhysRevE.65.026126}{\path{doi}}. \MR{1922214
  (}

\bibitem{BogProZvo02}
N.~Bogoliubov, A.~Pronko, and M.~Zvonarev, \emph{Boundary correlation functions
  of the six-vertex model}, J. Phys. A \textbf{35} (2002), no.~27, 5525--5541,
  \href{http://arxiv.org/abs/math-ph/0203025}{\path{arXiv:math-ph/0203025}},
  \href{http://dx.doi.org/10.1088/0305-4470/35/27/301}{\path{doi}}. \MR{1917248
  (}

\bibitem{BosVan12}
F.~Bosio and M.~van Leeuwen, \emph{A bijection proving the {A}ztec diamond
  theorem by combing lattice paths}, 2012, preprint,
  \href{http://arxiv.org/abs/1209.5373}{\path{arXiv:1209.5373}}.

\bibitem{BouHab93}
M.~Bousquet-M{\'e}lou and L.~Habsieger, \emph{Sur les matrices \`a signes
  alternants}, Discrete Math. \textbf{139} (1995), no.~1-3, 57--72, Formal
  power series and algebraic combinatorics (Montreal, PQ, 1992),
  \href{http://dx.doi.org/10.1016/0012-365X(94)00125-3}{\path{doi}}.
  \MR{1336832 (}

\bibitem{Bre99}
D.~Bressoud, \emph{Proofs and confirmations. the story of the alternating sign
  matrix conjecture}, MAA Spectrum, Mathematical Association of America and
  Cambridge University Press, Washington, DC and Cambridge, 1999. \MR{1718370
  (}

\bibitem{Bre01}
\bysame, \emph{Three alternating sign matrix identities in search of bijective
  proofs}, Adv. in Appl. Math. \textbf{27} (2001), no.~2-3, 289--297, Special
  issue in honor of Dominique Foata's 65th birthday (Philadelphia, PA, 2000),
  \href{http://arxiv.org/abs/math/0007114}{\path{arXiv:math/0007114}},
  \href{http://dx.doi.org/10.1006/aama.2001.0735}{\path{doi}}. \MR{1868966 (}

\bibitem{Bre08}
\bysame, \emph{Exploiting symmetries: alternating-sign matrices and the {W}eyl
  character formulas}, Surveys in number theory, Dev. Math., vol.~17, Springer,
  New York, 2008, pp.~43--55. \MR{2462945 (}

\bibitem{BrePro99}
D.~Bressoud and J.~Propp, \emph{How the alternating sign matrix conjecture was
  solved}, Notices Amer. Math. Soc. \textbf{46} (1999), no.~6, 637--646.
  \MR{1691562 (}

\bibitem{BruKir05}
R.~Brualdi and S.~Kirkland, \emph{Aztec diamonds and digraphs, and {H}ankel
  determinants of {S}chr\"oder numbers}, J. Combin. Theory Ser. B \textbf{94}
  (2005), no.~2, 334--351,
  \href{http://dx.doi.org/10.1016/j.jctb.2005.02.001}{\path{doi}}. \MR{2145518
  (}

\bibitem{BruSch83}
R.~Brualdi and H.~Schneider, \emph{Determinantal identities: {G}auss, {S}chur,
  {C}auchy, {S}ylvester, {K}ronecker, {J}acobi, {B}inet, {L}aplace, {M}uir, and
  {C}ayley}, Linear Algebra Appl. \textbf{52/53} (1983), 769–--791,
  \href{http://dx.doi.org/10.1016/0024-3795(83)80049-4}{\path{doi}}.
  \MR{1500275}

\bibitem{BruBumFri11a}
B.~Brubaker, D.~Bump, and S.~Friedberg, \emph{Eisenstein series, crystals, and
  ice}, Notices Amer. Math. Soc. \textbf{58} (2011), no.~11, 1563--1571.
  \MR{2896085}

\bibitem{BruBumFri11b}
\bysame, \emph{Schur polynomials and the {Y}ang-{B}axter equation}, Comm. Math.
  Phys. \textbf{308} (2011), no.~2, 281--301,
  \href{http://arxiv.org/abs/0912.0911}{\path{arXiv:0912.0911}},
  \href{http://dx.doi.org/10.1007/s00220-011-1345-3}{\path{doi}}. \MR{2851143}

\bibitem{BumMcnNak11}
D.~Bump, P.~McNamara, and M.~Nakasuji, \emph{Factorial {S}chur functions and
  the {Y}ang-{B}axter equation}, 2011, preprint,
  \href{http://arxiv.org/abs/1108.3087}{\path{arXiv:1108.3087}}.

\bibitem{Can07}
L.~Cantini, \emph{The rotor model with spectral parameters and enumerations of
  alternating sign matrices}, J. Stat. Mech. Theory Exp. (2007), P08012, 23
  pp.\ (electronic),
  \href{http://arxiv.org/abs/math-ph/0703087}{\path{arXiv:math-ph/0703087}},
  \href{http://dx.doi.org/10.1088/1742-5468/2007/08/P08012}{\path{doi}}.

\bibitem{CanSpo11}
L.~Cantini and A.~Sportiello, \emph{Proof of the {R}azumov-{S}troganov
  conjecture}, J. Combin. Theory Ser. A \textbf{118} (2011), no.~5, 1549--1574,
  \href{http://arxiv.org/abs/1003.3376}{\path{arXiv:1003.3376}},
  \href{http://dx.doi.org/10.1016/j.jcta.2011.01.007}{\path{doi}}. \MR{2771600}

\bibitem{CanSpo12}
\bysame, \emph{A one-parameter refinement of the {R}azumov-{S}troganov
  correspondence}, 2012, preprint,
  \href{http://arxiv.org/abs/1202.5253}{\path{arXiv:1202.5253}}.

\bibitem{Cha01}
R.~Chapman, \emph{Alternating sign matrices and tournaments}, Adv. in Appl.
  Math. \textbf{27} (2001), no.~2-3, 318--335, Special issue in honor of
  Dominique Foata's 65th birthday (Philadelphia, PA, 2000),
  \href{http://arxiv.org/abs/math/0008029}{\path{arXiv:math/0008029}},
  \href{http://dx.doi.org/10.1006/aama.2001.0737}{\path{doi}}. \MR{1868968 (}

\bibitem{Che11}
H.~Cheballah, \emph{Combinatoire des matrices \`a signes alternants et des
  partitions planes}, 2011, {P}h{D} thesis, {U}niversit\'e {P}aris {N}ord,
  \url{http://www-lipn.univ-paris13.fr/~cheballah/memoires/these.pdf}.

\bibitem{CheBia12}
H.~Cheballah and P.~Biane, \emph{Gog and {M}agog triangles, and the
  {S}ch{\"u}tzenberger involution}, S\'em. Lothar. Combin. \textbf{66}
  (2011/12), Art.\ B66d, 20 pp.\ (electronic),
  \href{http://arxiv.org/abs/1105.4986}{\path{arXiv:1105.4986}}. \MR{2971013}

\bibitem{Ciu96}
M.~Ciucu, \emph{Perfect matchings of cellular graphs}, J. Algebraic Combin.
  \textbf{5} (1996), no.~2, 87--103,
  \href{http://dx.doi.org/10.1023/A:1022408900061}{\path{doi}}. \MR{1382040 (}

\bibitem{Ciu98}
\bysame, \emph{A complementation theorem for perfect matchings of graphs having
  a cellular completion}, J. Combin. Theory Ser. A \textbf{81} (1998), no.~1,
  34--68, \href{http://dx.doi.org/10.1006/jcta.1997.2799}{\path{doi}}.
  \MR{1492868 (}

\bibitem{CiuEisKraZar01}
M.~Ciucu, T.~Eisenk{\"o}lbl, C.~Krattenthaler, and D.~Zare, \emph{Enumeration
  of lozenge tilings of hexagons with a central triangular hole}, J. Combin.
  Theory Ser. A \textbf{95} (2001), no.~2, 251--334,
  \href{http://arxiv.org/abs/math/9912053}{\path{arXiv:math/9912053}},
  \href{http://dx.doi.org/10.1006/jcta.2000.3165}{\path{doi}}. \MR{1845144 (}

\bibitem{CiuKra00}
M.~Ciucu and C.~Krattenthaler, \emph{Plane partitions. {II}. {$5\frac 12$}
  symmetry classes}, Combinatorial methods in representation theory ({K}yoto,
  1998), Adv. Stud. Pure Math., vol.~28, Kinokuniya, Tokyo, 2000, pp.~81--101,
  \href{http://arxiv.org/abs/math/9808018}{\path{arXiv:math/9808018}}.
  \MR{1855591 (}

\bibitem{Col12}
F.~Colomo, Personal communication, March 2012.

\bibitem{ColPro03}
F.~Colomo and A.~Pronko, \emph{On some representations of the six vertex model
  partition function}, Phys. Lett. A \textbf{315} (2003), no.~3--4, 231--236,
  \href{http://dx.doi.org/10.1016/S0375-9601(03)01043-0}{\path{doi}}.
  \MR{2003060 (}

\bibitem{ColPro04}
\bysame, \emph{On the partition function of the six-vertex model with domain
  wall boundary conditions}, J. Phys. A \textbf{37} (2004), no.~6, 1987--2002,
  \href{http://arxiv.org/abs/math-ph/0309064}{\path{arXiv:math-ph/0309064}},
  \href{http://dx.doi.org/10.1088/0305-4470/37/6/003}{\path{doi}}. \MR{2045907
  (}

\bibitem{ColPro05c}
\bysame, \emph{On the refined 3-enumeration of alternating sign matrices}, Adv.
  in Appl. Math. \textbf{34} (2005), no.~4, 798--811,
  \href{http://arxiv.org/abs/math-ph/0404045}{\path{arXiv:math-ph/0404045}},
  \href{http://dx.doi.org/10.1016/j.aam.2004.09.007}{\path{doi}}. \MR{2128998
  (}

\bibitem{ColPro05b}
\bysame, \emph{On two-point boundary correlations in the six-vertex model with
  domain wall boundary conditions}, J. Stat. Mech. Theory Exp. (2005), P05010,
  21 pp.\ (electronic),
  \href{http://arxiv.org/abs/math-ph/0503049}{\path{arXiv:math-ph/0503049}},
  \href{http://dx.doi.org/10.1088/1742-5468/2005/05/P05010}{\path{doi}}.
  \MR{2149859 (}

\bibitem{ColPro05a}
\bysame, \emph{Square ice, alternating sign matrices, and classical orthogonal
  polynomials}, J. Stat. Mech. Theory Exp. (2005), P01005, 33 pp.\
  (electronic),
  \href{http://arxiv.org/abs/math-ph/0411076}{\path{arXiv:math-ph/0411076}},
  \href{http://dx.doi.org/10.1088/1742-5468/2005/01/P01005}{\path{doi}}.
  \MR{2114554 (}

\bibitem{ColPro06}
\bysame, \emph{The role of orthogonal polynomials in the six-vertex model and
  its combinatorial applications}, J. Phys. A \textbf{39} (2006), no.~28,
  9015--9033,
  \href{http://arxiv.org/abs/math-ph/0602033}{\path{arXiv:math-ph/0602033}},
  \href{http://dx.doi.org/10.1088/0305-4470/39/28/S15}{\path{doi}}. \MR{2240471
  (}

\bibitem{ColPro08}
\bysame, \emph{Emptiness formation probability in the domain-wall six-vertex
  model}, Nuclear Phys. B \textbf{798} (2008), no.~3, 340--362,
  \href{http://arxiv.org/abs/0712.1524}{\path{arXiv:0712.1524}},
  \href{http://dx.doi.org/10.1016/j.nuclphysb.2007.12.016}{\path{doi}}.
  \MR{2411855 (}

\bibitem{ColPro10}
\bysame, \emph{The arctic curve of the domain-wall six-vertex model}, J. Stat.
  Phys. \textbf{138} (2010), no.~4-5, 662--700,
  \href{http://arxiv.org/abs/0907.1264}{\path{arXiv:0907.1264}},
  \href{http://dx.doi.org/10.1007/s10955-009-9902-2}{\path{doi}}. \MR{2594917
  (}

\bibitem{ColPro12}
\bysame, \emph{An approach for calculating correlation functions in the
  six-vertex model with domain wall boundary conditions}, Theoret. and Math.
  Phys. \textbf{171} (2012), no.~2, 641–--654,
  \href{http://arxiv.org/abs/1111.4353}{\path{arXiv:1111.4353}},
  \href{http://dx.doi.org/10.1007/s11232-012-0061-2}{\path{doi}}.

\bibitem{CorDucLeg10}
R.~Cori, P.~Duchon, and F.~Le~Gac, \emph{Comptage de matrices \`a signe
  alternant en fonction du nombre d'entr\'ees n\'egatives}, 2010,
  \url{http://www-apr.lip6.fr/alea2010/transparents/le_gac.pdf}.

\bibitem{Deg05}
J.~de~Gier, \emph{Loops, matchings and alternating-sign matrices}, Discrete
  Math. \textbf{298} (2005), no.~1-3, 365--388,
  \href{http://arxiv.org/abs/math/0211285}{\path{arXiv:math/0211285}},
  \href{http://dx.doi.org/10.1016/j.disc.2003.11.060}{\path{doi}}. \MR{2163456
  (}

\bibitem{Deg07}
\bysame, \emph{The {R}azumov-{S}troganov conjecture: stochastic processes,
  loops and combinatorics}, J. Stat. Mech. Theory Exp. (2007), N02001, 6 pp.\
  (electronic),
  \href{http://arxiv.org/abs/cond-mat/0702432}{\path{arXiv:cond-mat/0702432}},
  \href{http://dx.doi.org/10.1088/1742-5468/2007/02/N02001}{\path{doi}}.
  \MR{2293822 (}

\bibitem{Deg09}
\bysame, \emph{Fully packed loop models on finite geometries}, Polygons,
  polyominoes and polycubes, Lecture Notes in Phys., vol. 775, Springer,
  Dordrecht, 2009, pp.~317--346,
  \href{http://arxiv.org/abs/0901.3963}{\path{arXiv:0901.3963}}. \MR{2790336}

\bibitem{DegPyaZin09}
J.~de~Gier, P.~Pyatov, and P.~Zinn-Justin, \emph{Punctured plane partitions and
  the {$q$}-deformed {K}nizhnik--{Z}amolodchikov and {H}irota equations}, J.
  Combin. Theory Ser. A \textbf{116} (2009), no.~4, 772--794,
  \href{http://arxiv.org/abs/0712.3584}{\path{arXiv:0712.3584}},
  \href{http://dx.doi.org/10.1016/j.jcta.2008.11.008}{\path{doi}}. \MR{2513634
  (}

\bibitem{Dif04b}
P.~Di~Francesco, \emph{A refined {R}azumov--{S}troganov conjecture. {II}}, J.
  Stat. Mech. Theory Exp. (2004), P11004, 20 pp.\ (electronic),
  \href{http://arxiv.org/abs/cond-mat/0409576}{\path{arXiv:cond-mat/0409576}},
  \href{http://dx.doi.org/10.1088/1742-5468/2004/11/P11004}{\path{doi}}.
  \MR{2110544 (}

\bibitem{Dif04a}
\bysame, \emph{A refined {R}azumov-{S}troganov conjecture}, J. Stat. Mech.
  Theory Exp. (2004), P08009, 16 pp.\ (electronic),
  \href{http://arxiv.org/abs/cond-mat/0407477}{\path{arXiv:cond-mat/0407477}},
  \href{http://dx.doi.org/10.1088/1742-5468/2004/08/P08009}{\path{doi}}.
  \MR{2115252 (}

\bibitem{Dif06}
\bysame, \emph{Totally symmetric self-complementary plane partitions and the
  quantum {K}nizhnik-{Z}amolodchikov equation: a conjecture}, J. Stat. Mech.
  Theory Exp. (2006), P09008, 14 pp.\ (electronic),
  \href{http://arxiv.org/abs/cond-mat/0607499}{\path{arXiv:cond-mat/0607499}},
  \href{http://dx.doi.org/10.1088/1742-5468/2006/09/P09008}{\path{doi}}.
  \MR{2278472 (}

\bibitem{Dif07}
\bysame, \emph{Open boundary quantum {K}nizhnik-{Z}amolodchikov equation and
  the weighted enumeration of plane partitions with symmetries}, J. Stat. Mech.
  Theory Exp. (2007), P01024, 22 pp.\ (electronic),
  \href{http://arxiv.org/abs/math-ph/0611012}{\path{arXiv:math-ph/0611012}},
  \href{http://dx.doi.org/10.1088/1742-5468/2007/01/P01024}{\path{doi}}.
  \MR{2284008 (}

\bibitem{Dif12a}
\bysame, \emph{Integrable combinatorics}, 2012, preprint,
  \href{http://arxiv.org/abs/1210.4514}{\path{arXiv:1210.4514}}.

\bibitem{Dif12b}
\bysame, \emph{Truncated determinants and the refined enumeration of
  alternating sign matrices and descending plane partitions}, 2012, preprint,
  \href{http://arxiv.org/abs/1212.3006}{\path{arXiv:1212.3006}}.

\bibitem{Dif13}
\bysame, \emph{An inhomogeneous lambda-determinant}, Electron. J. Combin.
  \textbf{20} (2013), no.~3, Research Paper 19, 34 pp.\ (electronic),
  \href{http://arxiv.org/abs/1209.6619}{\path{arXiv:1209.6619}}. \MR{3104517}

\bibitem{DifZin05}
P.~Di~Francesco and P.~Zinn-Justin, \emph{Around the {R}azumov--{S}troganov
  conjecture: proof of a multi-parameter sum rule}, Electron. J. Combin.
  \textbf{12} (2005), Research Paper 6, 27 pp.\ (electronic),
  \href{http://arxiv.org/abs/math-ph/0410061}{\path{arXiv:math-ph/0410061}}.
  \MR{2134169 (}

\bibitem{Dod66}
C.~Dodgson, \emph{Condensation of determinants, being a new and brief method
  for computing their arithmetical values}, Proc. Roy. Soc. London \textbf{15}
  (1866), 150--155,
  \href{http://dx.doi.org/10.1098/rspl.1866.0037}{\path{doi}}.

\bibitem{Dor93}
W.~Doran, IV, \emph{A connection between alternating sign matrices and totally
  symmetric self-complementary plane partitions}, J. Combin. Theory Ser. A
  \textbf{64} (1993), no.~2,
  \href{http://dx.doi.org/10.1016/0097-3165(93)90099-T}{\path{doi}}.
  \MR{1245163 (}

\bibitem{EgeRedRya01}
{\"O}.~E{\u{g}}ecio{\u{g}}lu, T.~Redmond, and C.~Ryavec, \emph{From a
  polynomial {R}iemann hypothesis to alternating sign matrices}, Electron. J.
  Combin. \textbf{8} (2001), no.~1, Research Paper 36, 51 pp.\ (electronic).
  \MR{1877655 (}

\bibitem{ElkKupLarPro92a}
N.~Elkies, G.~Kuperberg, M.~Larsen, and J.~Propp, \emph{Alternating-sign
  matrices and domino tilings. {I}}, J. Algebraic Combin. \textbf{1} (1992),
  no.~2, 111--132,
  \href{http://arxiv.org/abs/math/9201305}{\path{arXiv:math/9201305}},
  \href{http://dx.doi.org/10.1023/A:1022420103267}{\path{doi}}. \MR{1226347 (}

\bibitem{ElkKupLarPro92b}
\bysame, \emph{Alternating-sign matrices and domino tilings. {II}}, J.
  Algebraic Combin. \textbf{1} (1992), no.~3, 219--234,
  \href{http://arxiv.org/abs/math/9201305}{\path{arXiv:math/9201305}},
  \href{http://dx.doi.org/10.1023/A:1022483817303}{\path{doi}}. \MR{1194076 (}

\bibitem{EuFu05}
S.~Eu and T.~Fu, \emph{A simple proof of the {A}ztec diamond theorem},
  Electron. J. Combin. \textbf{12} (2005), Research Paper 18, 8 pp.\
  (electronic),
  \href{http://arxiv.org/abs/math/0412041}{\path{arXiv:math/0412041}}.
  \MR{2134181 (}

\bibitem{FerSpo06}
P.~Ferrari and H.~Spohn, \emph{Domino tilings and the six-vertex model at its
  free-fermion point}, J. Phys. A \textbf{39} (2006), no.~33, 10297--10306,
  \href{http://arxiv.org/abs/cond-mat/0605406}{\path{arXiv:cond-mat/0605406}},
  \href{http://dx.doi.org/10.1088/0305-4470/39/33/003}{\path{doi}}. \MR{2256593
  (}

\bibitem{Fis06}
I.~Fischer, \emph{The number of monotone triangles with prescribed bottom row},
  Adv. in Appl. Math. \textbf{37} (2006), no.~2, 249--267,
  \href{http://arxiv.org/abs/math/0501102}{\path{arXiv:math/0501102}},
  \href{http://dx.doi.org/10.1016/j.aam.2005.03.009}{\path{doi}}. \MR{2251438
  (}

\bibitem{Fis07}
\bysame, \emph{A new proof of the refined alternating sign matrix theorem}, J.
  Combin. Theory Ser. A \textbf{114} (2007), no.~2, 253--264,
  \href{http://arxiv.org/abs/math/0507270}{\path{arXiv:math/0507270}},
  \href{http://dx.doi.org/10.1016/j.jcta.2006.04.004}{\path{doi}}. \MR{2293090
  (}

\bibitem{Fis10}
\bysame, \emph{The operator formula for monotone triangles---simplified proof
  and three generalizations}, J. Combin. Theory Ser. A \textbf{117} (2010),
  no.~8, 1143--1157,
  \href{http://arxiv.org/abs/0903.4628}{\path{arXiv:0903.4628}},
  \href{http://dx.doi.org/10.1016/j.jcta.2010.03.019}{\path{doi}}. \MR{2677680
  (}

\bibitem{Fis11}
\bysame, \emph{Refined enumerations of alternating sign matrices: monotone
  {$(d,m)$}-trapezoids with prescribed top and bottom row}, J. Algebraic
  Combin. \textbf{33} (2011), no.~2, 239–--257,
  \href{http://arxiv.org/abs/0907.0401}{\path{arXiv:0907.0401}},
  \href{http://dx.doi.org/10.1007/s10801-010-0243-7}{\path{doi}}. \MR{2765324}

\bibitem{Fis12}
\bysame, \emph{Linear relations of refined enumerations of alternating sign
  matrices}, J. Combin. Theory Ser. A \textbf{119} (2012), no.~3, 556--578,
  \href{http://arxiv.org/abs/1008.0527}{\path{arXiv:1008.0527}},
  \href{http://dx.doi.org/10.1016/j.jcta.2011.11.005}{\path{doi}}.

\bibitem{FisRom09}
I.~Fischer and D.~Romik, \emph{More refined enumerations of alternating sign
  matrices}, Adv. Math. \textbf{222} (2009), no.~6, 2004–--2035,
  \href{http://arxiv.org/abs/0903.5073}{\path{arXiv:0903.5073}},
  \href{http://dx.doi.org/10.1016/j.aim.2009.07.003}{\path{doi}}. \MR{2562772
  (}

\bibitem{FodPre04}
O.~Foda and I.~Preston, \emph{On the correlation functions of the domain wall
  six-vertex model}, J. Stat. Mech. Theory Exp. (2004), P11001, 24 pp.\
  (electronic),
  \href{http://arxiv.org/abs/math-ph/0409067}{\path{arXiv:math-ph/0409067}},
  \href{http://dx.doi.org/10.1088/1742-5468/2004/11/P11001}{\path{doi}}.
  \MR{2110546 (}

\bibitem{Fon10}
T.~Fonseca, \emph{Matrices \`a signes alternants, boucles denses et partitions
  planes}, 2010, {P}h{D} thesis, {U}niversit\'e {P}ierre et {M}arie {C}urie,
  {P}aris {VI},
  \url{http://tel.archives-ouvertes.fr/docs/00/53/72/16/PDF/tese.pdf}.

\bibitem{FonZin08}
T.~Fonseca and P.~Zinn-Justin, \emph{On the doubly refined enumeration of
  alternating sign matrices and totally symmetric self-complementary plane
  partitions}, Electron. J. Combin. \textbf{15} (2008), Research Paper 81, 35
  pp.\ (electronic),
  \href{http://arxiv.org/abs/0803.1595}{\path{arXiv:0803.1595}}. \MR{2411458 (}

\bibitem{Ful12}
M.~Fulmek, \emph{Viewing determinants as nonintersecting lattice paths yields
  classical determinantal identities bijectively}, Electron. J. Combin.
  \textbf{19} (2012), no.~3, Research Paper 21, 46 pp.\ (electronic),
  \href{http://arxiv.org/abs/1010.3860}{\path{arXiv:1010.3860}}. \MR{2967226}

\bibitem{FulKle01}
M.~Fulmek and M.~Kleber, \emph{Bijective proofs for {S}chur function identities
  which imply {D}odgson's condensation formula and {P}l\"ucker relations},
  Electron. J. Combin. \textbf{8} (2001), no.~1, Research Paper 16, 22 pp.\
  (electronic),
  \href{http://arxiv.org/abs/math/0004113}{\path{arXiv:math/0004113}}.
  \MR{1855857 (}

\bibitem{GesXin06}
I.~Gessel and G.~Xin, \emph{The generating function of ternary trees and
  continued fractions}, Electron. J. Combin. \textbf{13} (2006), Research Paper
  53, 48 pp.\ (electronic),
  \href{http://arxiv.org/abs/math/0505217}{\path{arXiv:math/0505217}}.
  \MR{2240759 (}

\bibitem{HamKin07}
A.~Hamel and R.~King, \emph{Bijective proofs of shifted tableau and alternating
  sign matrix identities}, J. Algebraic Combin. \textbf{25} (2007), no.~4,
  417--458,
  \href{http://arxiv.org/abs/math/0507479}{\path{arXiv:math/0507479}},
  \href{http://dx.doi.org/10.1007/s10801-006-0044-1}{\path{doi}}. \MR{2320371
  (}

\bibitem{Hon06}
A.~Hone, \emph{Dodgson condensation, alternating signs and square ice}, Philos.
  Trans. R. Soc. Lond. Ser. A Math. Phys. Eng. Sci. \textbf{364} (2006),
  no.~1849, 3183--3198,
  \href{http://dx.doi.org/10.1098/rsta.2006.1887}{\path{doi}}. \MR{2317901 (}

\bibitem{Ish06a}
M.~Ishikawa, \emph{On refined enumerations of totally symmetric
  self-complementary plane partitions {I}}, 2006, preprint,
  \href{http://arxiv.org/abs/math/0602068}{\path{arXiv:math/0602068}}.

\bibitem{Ish06b}
\bysame, \emph{On refined enumerations of totally symmetric self-complementary
  plane partitions {II}}, 2006, preprint,
  \href{http://arxiv.org/abs/math/0606082}{\path{arXiv:math/0606082}}.

\bibitem{Ize87}
A.~Izergin, \emph{Partition function of the six-vertex model in a finite
  volume}, Soviet Phys. Dokl. \textbf{32} (1987), no.~11, 878--879. \MR{919260
  (8}

\bibitem{IzeCokKor92}
A.~Izergin, D.~Coker, and V.~Korepin, \emph{Determinant formula for the
  six-vertex model}, J. Phys. A \textbf{25} (1992), no.~16, 4315--4334,
  \href{http://dx.doi.org/10.1088/0305-4470/25/16/010}{\path{doi}}. \MR{1181591
  (}

\bibitem{KarRom10}
M.~Karklinsky and D.~Romik, \emph{A formula for a doubly refined enumeration of
  alternating sign matrices}, Adv. in Appl. Math. \textbf{45} (2010), no.~1,
  28–--35, \href{http://arxiv.org/abs/0906.3405}{\path{arXiv:0906.3405}},
  \href{http://dx.doi.org/10.1016/j.aam.2009.11.005}{\path{doi}}. \MR{2628784
  (}

\bibitem{Knu96}
D.~Knuth, \emph{Overlapping {P}faffians}, Electron. J. Combin. \textbf{3}
  (1996), no.~2, Research Paper 5, 13 pp.\ (electronic),
  \href{http://arxiv.org/abs/math/9503234}{\path{arXiv:math/9503234}}.
  \MR{1392490 (}

\bibitem{Kor82}
V.~Korepin, \emph{Calculation of norms of {B}ethe wave functions}, Comm. Math.
  Phys. \textbf{86} (1982), no.~3, 391--418,
  \href{http://dx.doi.org/10.1007/BF01212176}{\path{doi}}. \MR{677006 (8}

\bibitem{KorZin00}
V.~Korepin and P.~Zinn-Justin, \emph{Thermodynamic limit of the six-vertex
  model with domain wall boundary conditions}, J. Phys. A \textbf{33} (2000),
  no.~40, 7053--7066,
  \href{http://arxiv.org/abs/cond-mat/0004250}{\path{arXiv:cond-mat/0004250}},
  \href{http://dx.doi.org/10.1088/0305-4470/33/40/304}{\path{doi}}. \MR{1792450
  (}

\bibitem{Kra96}
C.~Krattenthaler, \emph{A gog-magog conjecture}, 1996,
  \url{http://www.mat.univie.ac.at/~kratt/artikel/magog.html}.

\bibitem{Kra99}
\bysame, \emph{Advanced determinant calculus}, S\'em. Lothar. Combin.
  \textbf{42} (1999), Art.\ B42q, 67 pp.\ (electronic), The Andrews Festschrift
  (Maratea, 1998),
  \href{http://arxiv.org/abs/math/9902004}{\path{arXiv:math/9902004}}.
  \MR{1701596 (}

\bibitem{Kra05}
\bysame, \emph{Advanced determinant calculus: a complement}, Linear Algebra
  Appl. \textbf{411} (2005), 68--166,
  \href{http://arxiv.org/abs/math/0503507}{\path{arXiv:math/0503507}},
  \href{http://dx.doi.org/10.1016/j.laa.2005.06.042}{\path{doi}}. \MR{2178686
  (}

\bibitem{Kra06}
\bysame, \emph{Descending plane partitions and rhombus tilings of a hexagon
  with a triangular hole}, European J. Combin. \textbf{27} (2006), no.~7,
  1138--1146,
  \href{http://arxiv.org/abs/math/0310188}{\path{arXiv:math/0310188}},
  \href{http://dx.doi.org/10.1016/j.ejc.2006.06.008}{\path{doi}}. \MR{2259946
  (}

\bibitem{Kuo04}
E.~Kuo, \emph{Applications of graphical condensation for enumerating matchings
  and tilings}, Theoret. Comput. Sci. \textbf{319} (2004), no.~1-3, 29--57,
  \href{http://arxiv.org/abs/math/0304090}{\path{arXiv:math/0304090}},
  \href{http://dx.doi.org/10.1016/j.tcs.2004.02.022}{\path{doi}}. \MR{2074946
  (}

\bibitem{Kup96}
G.~Kuperberg, \emph{Another proof of the alternating-sign matrix conjecture},
  Internat. Math. Res. Notices (1996), no.~3, 139--150,
  \href{http://arxiv.org/abs/math/9712207}{\path{arXiv:math/9712207}}.
  \MR{1383754 (}

\bibitem{Kup02}
\bysame, \emph{Symmetry classes of alternating-sign matrices under one roof},
  Ann. of Math. (2) \textbf{156} (2002), no.~3, 835--866,
  \href{http://arxiv.org/abs/math/0008184}{\path{arXiv:math/0008184}},
  \href{http://dx.doi.org/10.2307/3597283}{\path{doi}}. \MR{1954236 (}

\bibitem{Lal02}
P.~Lalonde, \emph{{$q$}-enumeration of alternating sign matrices with exactly
  one {$-1$}}, Discrete Math. \textbf{256} (2002), no.~3, 759--773, LaCIM 2000
  Conference on Combinatorics, Computer Science and Applications (Montreal,
  QC), \href{http://dx.doi.org/10.1016/S0012-365X(02)00346-1}{\path{doi}}.
  \MR{1935787 (}

\bibitem{Lal03}
\bysame, \emph{Lattice paths and the antiautomorphism of the poset of
  descending plane partitions}, Discrete Math. \textbf{271} (2003), no.~1--3,
  311--319, \href{http://dx.doi.org/10.1016/S0012-365X(03)00159-6}{\path{doi}}.
  \MR{1999553 (}

\bibitem{Lal06}
\bysame, \emph{Alternating sign matrices with one {$-1$} under vertical
  reflection}, J. Combin. Theory Ser. A \textbf{113} (2006), no.~6, 980--994,
  \href{http://arxiv.org/abs/math/0401339}{\path{arXiv:math/0401339}},
  \href{http://dx.doi.org/10.1016/j.jcta.2005.09.002}{\path{doi}}. \MR{2244128
  (}

\bibitem{Lan13}
R.~Langer, \emph{Lambda determinants}, 2013, preprint,
  \href{http://arxiv.org/abs/1304.7367}{\path{arXiv:1304.7367}}.

\bibitem{Las07a}
A.~Lascoux, \emph{The 6 vertex model and {S}chubert polynomials}, SIGMA
  Symmetry Integrability Geom. Methods Appl. \textbf{3} (2007), Paper 29, 12
  pp., \href{http://arxiv.org/abs/math/0610719}{\path{arXiv:math/0610719}},
  \href{http://dx.doi.org/10.3842/SIGMA.2007.029}{\path{doi}}. \MR{2299830 (}

\bibitem{Las07b}
\bysame, \emph{Gaudin functions, and {E}uler-{P}oincar\'e characteristics},
  2007, preprint,
  \href{http://arxiv.org/abs/0709.1635}{\path{arXiv:0709.1635}}.

\bibitem{Leg11a}
F.~Le~Gac, \emph{Comptage d'{ASM}s selon le nombre de $-1$}, 2011,
  \url{http://www.labri.fr/perso/marckert/LeGac.pdf}.

\bibitem{Leg11b}
\bysame, \emph{Quelques probl\`emes d'\'enum\'eration autour des matrices \`a
  signes alternants}, 2011,
  \url{http://ori-oai.u-bordeaux1.fr/pdf/2011/LE_GAC_FLORENT_2011.pdf}.

\bibitem{Lec93}
B.~Leclerc, \emph{On identities satisfied by minors of a matrix}, Adv. Math.
  \textbf{100} (1993), no.~1, 101–--132,
  \href{http://dx.doi.org/10.1006/aima.1993.1030}{\path{doi}}. \MR{1224528 (}

\bibitem{Mcn10}
P.~McNamara, \emph{Factorial {S}chur functions via the six vertex model}, 2009,
  preprint, \href{http://arxiv.org/abs/0910.5288}{\path{arXiv:0910.5288}}.

\bibitem{MilRobRum82}
W.~Mills, D.~Robbins, and H.~Rumsey, Jr., \emph{Proof of the {M}acdonald
  conjecture}, Invent. Math. \textbf{66} (1982), no.~1, 73--87,
  \href{http://dx.doi.org/10.1007/BF01404757}{\path{doi}}. \MR{652647 (8}

\bibitem{MilRobRum83}
\bysame, \emph{Alternating sign matrices and descending plane partitions}, J.
  Combin. Theory Ser. A \textbf{34} (1983), no.~3, 340--359,
  \href{http://dx.doi.org/10.1016/0097-3165(83)90068-7}{\path{doi}}. \MR{700040
  (8}

\bibitem{MilRobRum86}
\bysame, \emph{Self-complementary totally symmetric plane partitions}, J.
  Combin. Theory Ser. A \textbf{42} (1986), no.~2, 277--292,
  \href{http://dx.doi.org/10.1016/0097-3165(86)90098-1}{\path{doi}}. \MR{847558
  (8}

\bibitem{MilRobRum87}
\bysame, \emph{Enumeration of a symmetry class of plane partitions}, Discrete
  Math. \textbf{67} (1987), no.~1, 43--55,
  \href{http://dx.doi.org/10.1016/0012-365X(87)90165-8}{\path{doi}}. \MR{908185
  (8}

\bibitem{Mot11}
K.~Motegi, \emph{Boundary correlation functions of the six and nineteen vertex
  models with domain wall boundary conditions}, Phys. A \textbf{390} (2011),
  no.~20, 3337--3347,
  \href{http://arxiv.org/abs/1101.0187}{\path{arXiv:1101.0187}},
  \href{http://dx.doi.org/10.1016/j.physa.2011.04.032}{\path{doi}}.
  \MR{2822902}

\bibitem{Mot12}
\bysame, \emph{Boundary correlation functions of integrable vertex models}, J.
  Phys.: Conf. Ser. \textbf{343} (2012), no.~012081, 10 pp.,
  \href{http://dx.doi.org/10.1088/1742-6596/343/1/012081}{\path{doi}}.

\bibitem{Mui06}
T.~Muir, \emph{The theory of determinants in the historical order of
  development. {V}ol.\ {I}}, Macmillan and Co., London, 1906.

\bibitem{Mui60}
\bysame, \emph{A treatise on the theory of determinants}, Revised and enlarged
  by William H. Metzler, Dover Publications Inc., New York, 1960. \MR{0114826
  (}

\bibitem{Ng12}
S.~Ng, \emph{Alternating paths of fully packed loops and inversion number},
  2012, preprint,
  \href{http://arxiv.org/abs/1202.2483}{\path{arXiv:1202.2483}}.

\bibitem{Oka06}
S.~Okada, \emph{Enumeration of symmetry classes of alternating sign matrices
  and characters of classical groups}, J. Algebraic Combin. \textbf{23} (2006),
  no.~1, 43--69,
  \href{http://arxiv.org/abs/math/0408234}{\path{arXiv:math/0408234}},
  \href{http://dx.doi.org/10.1007/s10801-006-6028-3}{\path{doi}}. \MR{2218849
  (}

\bibitem{Pas06}
V.~Pasquier, \emph{Quantum incompressibility and {R}azumov--{S}troganov type
  conjectures}, Ann. Henri Poincar\'e \textbf{7} (2006), no.~3, 397--421,
  \href{http://arxiv.org/abs/cond-mat/0506075}{\path{arXiv:cond-mat/0506075}},
  \href{http://dx.doi.org/10.1007/s00023-005-0254-4}{\path{doi}}. \MR{2226742
  (}

\bibitem{Pro01}
J.~Propp, \emph{The many faces of alternating-sign matrices}, Discrete models:
  combinatorics, computation, and geometry ({P}aris, 2001), Discrete Math.
  Theor. Comput. Sci. Proc., AA, Maison Inform. Math. Discr\`et. (MIMD), Paris,
  2001, pp.~43--58 (electronic),
  \href{http://arxiv.org/abs/math/0208125}{\path{arXiv:math/0208125}}.
  \MR{1888762 (}

\bibitem{Pro05}
\bysame, \emph{Lambda-determinants and domino-tilings}, Adv. in Appl. Math.
  \textbf{34} (2005), no.~4, 871--879,
  \href{http://arxiv.org/abs/math/0406301}{\path{arXiv:math/0406301}},
  \href{http://dx.doi.org/10.1016/j.aam.2004.06.005}{\path{doi}}. \MR{2129002
  (}

\bibitem{RazStr01}
A.~Razumov and Y.~Stroganov, \emph{Spin chains and combinatorics}, J. Phys. A
  \textbf{34} (2001), no.~14, 3185--3190,
  \href{http://arxiv.org/abs/cond-mat/0012141}{\path{arXiv:cond-mat/0012141}},
  \href{http://dx.doi.org/10.1088/0305-4470/34/14/322}{\path{doi}}. \MR{1832783
  (}

\bibitem{RazStr04a}
\bysame, \emph{Combinatorial nature of the ground-state vector of the {$\rm
  O(1)$} loop model}, Theoret. and Math. Phys. \textbf{138} (2004), no.~3,
  333--337,
  \href{http://arxiv.org/abs/math/0104216}{\path{arXiv:math/0104216}},
  \href{http://dx.doi.org/10.1023/B:TAMP.0000018450.36514.d7}{\path{doi}}.
  \MR{2077318 (}

\bibitem{RazStr04b}
\bysame, \emph{Refined enumerations of some symmetry classes of
  alternating-sign matrices}, Theoret. and Math. Phys. \textbf{141} (2004),
  no.~3, 1609–--1630,
  \href{http://arxiv.org/abs/math-ph/0312071}{\path{arXiv:math-ph/0312071}},
  \href{http://dx.doi.org/10.1023/B:TAMP.0000049757.07267.9d}{\path{doi}}.
  \MR{2141132 (}

\bibitem{RazStr06c}
\bysame, \emph{Bethe roots and refined enumeration of alternating-sign
  matrices}, J. Stat. Mech. Theory Exp. (2006), P07004, 12 pp.\ (electronic),
  \href{http://arxiv.org/abs/math-ph/0605004}{\path{arXiv:math-ph/0605004}},
  \href{http://dx.doi.org/10.1088/1742-5468/2006/07/P07004}{\path{doi}}.
  \MR{2244324 (}

\bibitem{RazStr06b}
\bysame, \emph{Enumeration of quarter-turn symmetric alternating-sign matrices
  of odd-order}, Theoret. and Math. Phys. \textbf{149} (2006), no.~3,
  1639–--1650,
  \href{http://arxiv.org/abs/math-ph/0507003}{\path{arXiv:math-ph/0507003}},
  \href{http://dx.doi.org/10.1007/s11232-006-0148-8}{\path{doi}}. \MR{2321099
  (}

\bibitem{RazStr06a}
\bysame, \emph{Enumerations of half-turn-symmetric alternating-sign matrices of
  odd-order}, Theoret. and Math. Phys. \textbf{148} (2006), no.~3, 1174–--1198,
  \href{http://arxiv.org/abs/math-ph/0504022}{\path{arXiv:math-ph/0504022}},
  \href{http://dx.doi.org/10.1007/s11232-006-0111-8}{\path{doi}}. \MR{2283658
  (}

\bibitem{RazStr09}
\bysame, \emph{A statistical model of three colors with boundary conditions of
  domain wall type: functional equations}, Theoret. and Math. Phys.
  \textbf{161} (2009), no.~1, 1325--1339,
  \href{http://arxiv.org/abs/0805.0669}{\path{arXiv:0805.0669}},
  \href{http://dx.doi.org/10.1007/s11232-009-0119-y}{\path{doi}}. \MR{2664880
  (}

\bibitem{RazStrZin07}
A.~Razumov, Y.~Stroganov, and P.~Zinn-Justin, \emph{Polynomial solutions of
  {$q$}{KZ} equation and ground state of {$XXZ$} spin chain at
  {$\Delta=-1/2$}}, J. Phys. A \textbf{40} (2007), no.~39, 11827--11847,
  \href{http://arxiv.org/abs/0704.3542}{\path{arXiv:0704.3542}},
  \href{http://dx.doi.org/10.1088/1751-8113/40/39/009}{\path{doi}}. \MR{2374053
  (}

\bibitem{Rob91}
D.~Robbins, \emph{The story of {$1,2,7,42,429,7436,\ldots$}}, Math.
  Intelligencer \textbf{13} (1991), no.~2, 12--19,
  \href{http://dx.doi.org/10.1007/BF03024081}{\path{doi}}. \MR{1098216 (}

\bibitem{Rob00}
\bysame, \emph{Symmetry classes of alternating sign matrices}, 2000, preprint,
  \href{http://arxiv.org/abs/math/0008045}{\path{arXiv:math/0008045}}.

\bibitem{RobRum86}
D.~Robbins and H.~Rumsey, Jr., \emph{Determinants and alternating sign
  matrices}, Adv. in Math. \textbf{62} (1986), no.~2, 169--184,
  \href{http://dx.doi.org/10.1016/0001-8708(86)90099-X}{\path{doi}}. \MR{865837
  (8}

\bibitem{Ros09}
H.~Rosengren, \emph{An {I}zergin-{K}orepin-type identity for the 8{VSOS} model,
  with applications to alternating sign matrices}, Adv. in Appl. Math.
  \textbf{43} (2009), no.~2, 137--155,
  \href{http://arxiv.org/abs/0801.1229}{\path{arXiv:0801.1229}},
  \href{http://dx.doi.org/10.1016/j.aam.2009.01.003}{\path{doi}}. \MR{2531917
  (}

\bibitem{Ros11}
\bysame, \emph{The three-colour model with domain wall boundary conditions},
  Adv. in Appl. Math. \textbf{46} (2011), no.~1-4, 481--535,
  \href{http://arxiv.org/abs/0911.0561}{\path{arXiv:0911.0561}},
  \href{http://dx.doi.org/10.1016/j.aam.2010.10.007}{\path{doi}}. \MR{2794035
  (}

\bibitem{Ros12}
\bysame, \emph{New proofs of determinant evaluations related to plane
  partitions}, Electron. J. Combin. \textbf{19} (2012), no.~4, Research Paper
  15, 25 pp.\ (electronic),
  \href{http://arxiv.org/abs/1204.3424}{\path{arXiv:1204.3424}}. \MR{3001652}

\bibitem{Sog93}
K.~Sogo, \emph{Time-dependent orthogonal polynomials and theory of soliton.
  {A}pplications to matrix model, vertex model and level statistics}, J. Phys.
  Soc. Japan \textbf{62} (1993), no.~6, 1887–--1894,
  \href{http://dx.doi.org/10.1143/JPSJ.62.1887}{\path{doi}}. \MR{1230768 (}

\bibitem{Sta86b}
R.~Stanley, \emph{A baker's dozen of conjectures concerning plane partitions},
  Combinatoire \'enum\'erative ({M}ontreal, {Q}ue., 1985/{Q}uebec, {Q}ue.,
  1985), Lecture Notes in Math., vol. 1234, Springer, Berlin, 1986,
  pp.~285--293, \href{http://dx.doi.org/10.1007/BFb0072521}{\path{doi}}.
  \MR{927770 (8}

\bibitem{Sta86a}
\bysame, \emph{Symmetries of plane partitions}, J. Combin. Theory Ser. A
  \textbf{43} (1986), no.~1, 103--113,
  \href{http://dx.doi.org/10.1016/0097-3165(86)90028-2}{\path{doi}}. \MR{859302
  (8}

\bibitem{Sta12}
\bysame, \emph{Enumerative combinatorics. {V}olume 1}, second ed., Cambridge
  Studies in Advanced Mathematics, vol.~49, Cambridge University Press,
  Cambridge, 2012. \MR{2868112}

\bibitem{Ste90}
J.~Stembridge, \emph{Nonintersecting paths, {P}faffians, and plane partitions},
  Adv. Math. \textbf{83} (1990), no.~1, 96--131,
  \href{http://dx.doi.org/10.1016/0001-8708(90)90070-4}{\path{doi}}.
  \MR{1069389 (}

\bibitem{Str09}
J.~Striker, \emph{The poset perspective on alternating sign matrices}, 21st
  {I}nternational {C}onference on {F}ormal {P}ower {S}eries and {A}lgebraic
  {C}ombinatorics ({FPSAC} 2009), Discrete Math. Theor. Comput. Sci. Proc., AK,
  Assoc. Discrete Math. Theor. Comput. Sci., Nancy, 2009, pp.~813--824,
  \href{http://arxiv.org/abs/0905.4495}{\path{arXiv:0905.4495}}. \MR{2721564 (}

\bibitem{Str11a}
\bysame, \emph{A direct bijection between descending plane partitions with no
  special parts and permutation matrices}, Discrete Math. \textbf{311} (2011),
  no.~21, 2581--2585,
  \href{http://arxiv.org/abs/1002.3391}{\path{arXiv:1002.3391}},
  \href{http://dx.doi.org/10.1016/j.disc.2011.07.030}{\path{doi}}. \MR{2832158}

\bibitem{Str11b}
\bysame, \emph{A unifying poset perspective on alternating sign matrices, plane
  partitions, {C}atalan objects, tournaments, and tableaux}, Adv. in Appl.
  Math. \textbf{46} (2011), no.~1-4, 583--609,
  \href{http://dx.doi.org/10.1016/j.aam.2010.02.007}{\path{doi}}. \MR{2794039}

\bibitem{Str01}
Y.~Stroganov, \emph{The importance of being odd}, J. Phys. A \textbf{34}
  (2001), no.~13, L179--L185,
  \href{http://arxiv.org/abs/cond-mat/0012035}{\path{arXiv:cond-mat/0012035}},
  \href{http://dx.doi.org/10.1088/0305-4470/34/13/104}{\path{doi}}. \MR{1831119
  (}

\bibitem{Str03}
\bysame, \emph{3-enumerated alternating sign matrices}, 2003, preprint,
  \href{http://arxiv.org/abs/math-ph/0304004}{\path{arXiv:math-ph/0304004}}.

\bibitem{Str04}
\bysame, \emph{Izergin--{K}orepin determinant reloaded}, 2004, preprint,
  \href{http://arxiv.org/abs/math-ph/0409072}{\path{arXiv:math-ph/0409072}}.

\bibitem{Str06}
\bysame, \emph{{I}zergin--{K}orepin determinant at a third root of unity},
  Theoret. and Math. Phys. \textbf{146} (2006), no.~1, 53--62,
  \href{http://arxiv.org/abs/math-ph/0204042}{\path{arXiv:math-ph/0204042}},
  \href{http://dx.doi.org/10.1007/s11232-006-0006-8}{\path{doi}}. \MR{2243403
  (}

\bibitem{Str08}
\bysame, \emph{{$1/N$} phenomenon for some symmetry classes of the odd
  alternating sign matrices}, 2008, preprint,
  \href{http://arxiv.org/abs/0807.2520}{\path{arXiv:0807.2520}}.

\bibitem{Tok88}
T.~Tokuyama, \emph{A generating function of strict {G}elfand patterns and some
  formulas on characters of general linear groups}, J. Math. Soc. Japan
  \textbf{40} (1988), no.~4, 671--685,
  \href{http://dx.doi.org/10.2969/jmsj/04040671}{\path{doi}}. \MR{959093 (8}

\bibitem{Tur60}
H.~Turnbull, \emph{The theory of determinants, matrices, and invariants}, 3rd
  ed, Dover Publications Inc., New York, 1960. \MR{0130257 (}

\bibitem{Wil08}
N.~Williams, \emph{An alternating sum of alternating sign matrices}, Rose
  Hulman Undergrad. Math. J. \textbf{9} (2008), no.~2, 6 pp.\ (electronic).

\bibitem{Yan91}
B.-Y. Yang, \emph{Two enumeration problems about the {A}ztec diamonds}, 1991,
  {P}h{D} thesis, {MIT},
  \url{http://dspace.mit.edu/bitstream/handle/1721.1/13937/24960075.pdf}.

\bibitem{Zei94}
D.~Zeilberger, \emph{A constant term identity featuring the ubiquitous (and
  mysterious) {A}ndrews-{M}ills-{R}obbins-{R}umsey numbers
  {$1,2,7,42,429,\cdots$}}, J. Combin. Theory Ser. A \textbf{66} (1994), no.~1,
  17--27, \href{http://dx.doi.org/10.1016/0097-3165(94)90047-7}{\path{doi}}.
  \MR{1273288 (}

\bibitem{Zei96a}
\bysame, \emph{Proof of the alternating sign matrix conjecture}, Electron. J.
  Combin. \textbf{3} (1996), no.~2, Research Paper 13, 84 pp.\ (electronic),
  The Foata Festschrift,
  \href{http://arxiv.org/abs/math/9407211}{\path{arXiv:math/9407211}}.
  \MR{1392498 (}

\bibitem{Zei96b}
\bysame, \emph{Proof of the refined alternating sign matrix conjecture}, New
  York J. Math. \textbf{2} (1996), 59--68 (electronic),
  \href{http://arxiv.org/abs/math/9606224}{\path{arXiv:math/9606224}}.
  \MR{1383799 (}

\bibitem{Zei97}
\bysame, \emph{Dodgson's determinant-evaluation rule proved by two-timing men
  and women}, Electron. J. Combin. \textbf{4} (1997), no.~2, Research Paper 22,
  2 pp.\ (electronic), The Wilf Festschrift (Philadelphia, PA, 1996),
  \href{http://arxiv.org/abs/math/9808079}{\path{arXiv:math/9808079}}.
  \MR{1444169 (}

\bibitem{Zei05}
\bysame, \emph{Dave {R}obbins' art of guessing}, Adv. in Appl. Math.
  \textbf{34} (2005), no.~4, 939--954,
  \href{http://dx.doi.org/10.1016/j.aam.2004.04.004}{\path{doi}}. \MR{2129005}

\bibitem{Zin09}
P.~Zinn-Justin, \emph{Six-vertex, loop and tiling models: integrability and
  combinatorics}, Lambert Academic Publishing, 2009, Habilitation thesis,
  \href{http://arxiv.org/abs/0901.0665}{\path{arXiv:0901.0665}}.

\bibitem{ZinDif08}
P.~Zinn-Justin and P.~Di~Francesco, \emph{Quantum {K}nizhnik-{Z}amolodchikov
  equation, totally symmetric self-complementary plane partitions, and
  alternating-sign matrices}, Theoret. and Math. Phys. \textbf{154} (2008),
  no.~3, 331--348,
  \href{http://arxiv.org/abs/math-ph/0703015}{\path{arXiv:math-ph/0703015}},
  \href{http://dx.doi.org/10.1007/s11232-008-0031-x}{\path{doi}}. \MR{2431554
  (}

\end{thebibliography}
\bibliographystyle{amsplainhyper}
\end{document}